\numberwithin{equation}{section}
\numberwithin{equation}{section}
\newtheorem{theorem}{Theorem}[section]
\newtheorem{lemma}{Lemma}[section]
\newtheorem{corollary}{Corollary}[section]
\newtheorem{definition}{Definition}[section]
\newtheorem{remark}{Remark}[section]
\newcommand{\relu}{\mbox{ReLU}}
\newcommand{\reluk}{\(\relu^k\)}
\def\NN{\mathbb{N}}
\def\RR{\mathbb{R}}
\def\BB{\mathbb{B}}
\def\SS{\mathbb{S}}
\def\YY{\mathbb{Y}}
\def\mal{\max\limits}
\def\sul{\sum\limits}
\def\lt{\left}
\def\rt{\right}
\newcommand{\lrt}[1]{\left({#1}\right)}
\def\mH{\mathcal{H}^\rr(\SS^d)}
\def\rr{ r}
\def\ss{ s}
\renewcommand\@fnsymbol[1]{\arabic{footnote}}
\title{Integral Representations of Sobolev Spaces via ReLU$^k$ Activation Function and Optimal Error Estimates for Linearized Networks}
\author{ Xinliang Liu\thanks{King Abdullah University of Science and Technology, Thuwal 23955, Saudi Arabia} \and
  Tong Mao\footnotemark[1] \and
  Jinchao Xu\footnotemark[1]
}
\date{}
\begin{document}
	
\maketitle
\begin{abstract}

This paper presents two main theoretical results concerning shallow neural networks with ReLU$^k$ activation functions. We establish a novel integral representation for Sobolev spaces, showing that every function in $\mathcal{H}^{\frac{d+2k+1}{2}}(\Omega)$ can be expressed as an $\mathcal{L}^2$-weighted integral of ReLU$^k$ ridge functions over the unit sphere. This result mirrors the known representation of Barron spaces and highlights a fundamental connection between Sobolev regularity and neural network representations. Moreover, we prove that linearized shallow networks—constructed by fixed inner parameters and optimizing only the linear coefficients—achieve optimal approximation rates $\mathcal{O}(n^{-\frac{1}{2}-\frac{2k+1}{2d}})$ in Sobolev spaces.


 \end{abstract}

\section{Introduction}\label{sec:intro}

Deep neural networks (DNNs) are at the core of numerous breakthroughs in artificial intelligence \cite{LeCun2015}, providing a powerful framework for solving complex problems. The efficiency of deep learning depends on several critical factors, including expressive power, generalization ability, training efficiency, and robustness. 

Mathematically, a deep neural network is a specialized class of functions constructed through an iterative composition of shallow neural networks. Thus, the following shallow neural networks function class serves as a fundamental building block of artificial intelligence technology:
\begin{equation}\label{shallow}
    \Sigma_{n}^\sigma= \left\{
        \sum_{j=1}^n a_j \sigma\big(w_j\cdot x+b_j\big) : ~ w_j \in \mathbb{R}^{d},~a_j,b_j\in\RR
    \right\},
\end{equation}
Among various activation functions, the Rectified Linear Unit (ReLU) has emerged as the dominant choice in modern deep learning.  In this paper, we focus on ReLU  activation function, \(\relu(x) = \max(0, x)\), and its variants, \(\sigma_k (x)= \relu^k(x)\) ($k$ is a nonnegative integer). Due to the homogeneity of $\sigma_k$, we assume the parameters lie on the unit sphere $\mathbb{S}^d$. The corresponding function class of shallow ReLU$^k$ neural networks, denoted by $\Sigma_n^k = \Sigma_{n}^{\sigma_k}$, is then
\begin{equation}\label{shallowk}
    \Sigma_{n}^k= \left\{
        \sum_{j=1}^n a_j \sigma_k\big(\theta_j\cdot \tilde{x}\big) : ~ \theta_j \in \mathbb{S}^{d},~a_j\in\RR
    \right\},
\end{equation}
where $\displaystyle\tilde x=\binom{x}{1}$ and $\displaystyle\theta_j=\binom{w_j}{b_j}$. To
ensure encodability and numerical stability, it is essential to impose restrictions on the size
of the parameters in a neural network. Consequently, given a parameter $M$, which essentially
determines the complexity of the function class, the stable shallow neural network function class is defined as
\begin{equation}\label{shallowkM}
    \Sigma_{n,M}^k= \left\{
        \sum_{j=1}^n a_j \sigma_k\big(\theta_j\cdot \tilde{x}\big) : ~ \theta_j \in \mathbb{S}^{d},~\sul_{j=1}^n|a_j|\leq M
    \right\}.
\end{equation}

In the early works, qualitative convergence has been extensively investigated since the 1990s (see, e.g., \cite{cybenko1989approximation,hornik1989multilayer}). A fundamental result states that the universal approximation property holds, namely the functions in $\Sigma_n^k$ can approximate any continuous function, as long as the activation function \(\sigma\) is not a polynomial; see, e.g., \cite{leshno1993multilayer}.

The approximation rate of the shallow neural networks has also been widely studied in the literature. Roughly speaking, these results pertain to three types of function spaces: Barron spaces $\mathcal{B}^\sigma(\Omega)$, also known as variation spaces $\mathcal{K}_1(\mathbb D_\sigma)$ (see \eqref{eqn:barron_norm_closure} below for definition); spectral Barron space \cite{klusowski2018approximation,siegel2022high}; and Sobolev space (see \eqref{eqn:def_sob} below for definition).
In most existing works, the following error estimate has been established for the aforementioned function spaces:
\begin{equation}\label{half}
  \inf_{f_n\in \Sigma_n^\sigma}
  \|f - f_n\|_{\mathcal{L}^2(\Omega)}
  =\mathcal{O}(n^{-\frac{1}{2}}). 
\end{equation}
A key technique used to establish \eqref{half} can be traced back to \cite{pisier1981remarques}, which applied Maurey’s sampling method from \cite{maurey1973type}; see also \cite{Jones1992,barron1993universal,barron1994approximation,devore1996some,makovoz1998uniform,kurkova1,kurkova2,lewicki2004approximation,temlyakov2008greedy,Barron2008,E2019population,E2019barron,E2020representation}. An overview of aforementioned and other related results  can be found in \cite{DeVore1998,DeVore2021,konyagin2018some}.

Several studies have also explored improved approximation rates compared to \eqref{half}, namely:
\begin{equation}\label{halfmore}
  \inf_{f_n\in \Sigma_n^\sigma}
  \|f - f_n\|_{\mathcal{L}^2(\Omega)}
  = \mathcal{O}(n^{-\frac{1}{2}-\frac{\alpha}{d}})
\end{equation}
for some $\alpha>0$. For Barron spaces, we refer to  \cite{makovoz1996random,bach2017breaking,klusowski2018approximation,xu2020finite,siegel2022sharp,siegel2022optimal,mhaskar2023tractability}. For Sobolev spaces, we refer to \cite{mhaskar1993approximation,petrushev1998approximation,pinkus1999approximation}
.

Notably, \cite{klusowski2018approximation} and \cite{xu2020finite} established an approximation rate of $\mathcal{O}(n^{-\frac{1}{2}-\frac{k}{d}})$ for spectral Barron spaces $\tilde{\mathcal{B}}^{k}(\Omega)$, with further generalizations in \cite{ma2022uniform}. For closer study of $\tilde{\mathcal{B}}^{k}(\Omega)$, see also \cite{meng2022new}.

In the context of shallow ReLU$^k$ networks, a strong connection exists between Barron spaces $\mathcal{B}^k(\Omega) = \mathcal{B}^{\sigma_k}(\Omega)$ (see \eqref{eqn:barron_integ_def}) and the class of shallow networks \eqref{shallowkM} (for further details, see Theorem \ref{thm:SiegelXu}):
\begin{enumerate}
    \item A function $f$ can be approximated by neural network classes $\{\Sigma_{n,M}^k\}_{n=1}^\infty$ if and only if $f \in \mathcal{B}^k(\Omega)$ \cite{siegel2023characterization}.
    \item The Barron space $\mathcal{B}^k(\Omega)$ has an integration form \cite{siegel2023characterization}
    \begin{equation}
         \mathcal{B}^k(\Omega)=\lt\{\int_{\SS^d}\sigma_k(\theta\cdot\tilde x)d\mu(\theta):~\mu\in\mathcal{M}(\SS^d)\rt\}.
    \end{equation}
        Moreover,
    \begin{equation}
        \|f\|_{\mathcal{B}^k(\Omega)}\simeq\inf\limits_{\mu\in\mathcal{M}(\SS^d)}\lt\{|\mu|(\SS^d):~f(x)=\int_{\SS^d}\sigma_k(\theta\cdot\tilde x)d\mu(\theta)\rt\}.
    \end{equation}
    \item Shallow ReLU$^k$ networks achieve the optimal approximation rate in $\mathcal{B}^k(\Omega)$ \cite{siegel2022sharp}:
\begin{equation}\label{nonlinearfn}
    \inf_{f_n\in \Sigma_{n,M}^k}\|f-f_n\|_{\mathcal{L}^2(\Omega)}=\mathcal{O} (n^{-\frac{1}{2}-\frac{2k+1}{2d}}),
\end{equation}
where $M\simeq\|f\|_{\mathcal{B}^{k}(\Omega)}$.
\end{enumerate}
The constraint on the parameter bound $M$ ensures both encodability and numerical stability, which is crucial in the generalization analysis of machine learning models.

In this paper, we further investigate the approximation properties of shallow ReLU$^k$ networks. Specifically, we focus on a linear subset of $\Sigma_n^k$, which we refer to as the finite neuron space (FNS). Given a predetermined set of parameters $\displaystyle\left\{\theta_j^*\right\}_{j=1}^n \subset \SS^d$, we define the corresponding basis functions
\begin{equation*}
\phi_j(x) = \sigma_k(\theta_j^* \cdot \tilde{x}) = \sigma_k(w_j^* \cdot x + b_j^*), \quad j = 1, \dots, n,
\end{equation*}
and the finite neuron space as
\begin{equation}\label{linearV}
L_n^k= L_n^k(\left\{\theta_j^*\right\}_{j=1}^n) = \mathrm{span} \left\{ \phi_1, \dots, \phi_n \right\}.
\end{equation}
Analogous to $\Sigma_{n,M}^k$ in \eqref{shallowkM}, we define the constrained version of $L_n^k$ as
\begin{equation}
    L_{n,M}^k=\biggl\{\sul_{j=1}^na_j\phi_j:~\Bigl(n\sul_{j=1}^na_j^2\Bigr)^{\frac{1}{2}}\leq M\biggr\}.
\end{equation}
By applying the Cauchy–Schwarz inequality, it follows that
\begin{equation}\label{eqn:lin_subs_nonlin}
    L_{n,M}^k\subset\Sigma_{n,M}^k.
\end{equation}
A key advantage of the linear spaces $L_n^k$ and $L_{n,M}^k$ lies in their structural simplicity, which enables more efficient analysis and computation. Notably, optimization over $L_{n,M}^k$ reduces to a convex least squares problem, whereas training in $\Sigma_{n,M}^k$ generally involves solving a highly nonconvex optimization problem.

Similar to the results on ReLU$^k$ neural networks $\Sigma_{n,M}^k$, this paper presents a novel perspective on characterizing the finite neuron space $L_n^k$ and its constrained counterpart $L_{n,M}^k$. Under the assumption that the parameters $\{\theta_j^*\}_{j=1}^n$ form a well-distributed mesh over $\mathbb{S}^d$ (see the precise definition in \eqref{eqn:welldistr}), we establish the following results:
 \begin{enumerate}
    \item A function $f$ can be approximated by FNS $\{L_{n,M}^k\}_{n=1}^\infty$ if and only if $f\in\mathcal{H}^{\frac{d+2k+1}{2}}(\Omega)$;
    \item The Sobolev space $\mathcal{H}^{\frac{d+2k+1}{2}}(\Omega)$ has an integration form
    \begin{equation}
    \mathcal{H}^{\frac{d+2k+1}{2}}(\Omega)=\lt\{\fint_{\SS^d}\sigma_k(\theta\cdot\tilde x)\psi(\theta)d\theta:~\psi\in\mathcal{L}^2(\SS^d)\rt\}.
\end{equation}
Moreover,
\begin{equation}\label{eqn:int_rep_sob_intr}
    \|f\|_{\mathcal{H}^{\frac{d+2k+1}{2}}(\Omega)}\simeq\inf\limits_{\psi\in\mathcal{L}^2(\SS^d)}\Bigl\{\|\psi\|_{\mathcal{L}^2(\SS^d)}: f(x)=\fint_{\SS^d}\sigma_k(\theta\cdot\tilde x)\psi(\theta)d\theta\Bigr\},
\end{equation}
    \item The FNS achieves the optimal approximation rate in $\mathcal{H}^{\frac{d+2k+1}{2}}(\Omega)$:
\begin{equation}\label{linearfn}
    \inf_{f_n\in L_{n,M}^k}\|f-f_n\|_{\mathcal{L}^2(\Omega)}=\mathcal{O} (n^{-\frac{1}{2}-\frac{2k+1}{2d}}),
\end{equation}
where $M\simeq\|f\|_{\mathcal{H}^{\frac{d+2k+1}{2}}(\Omega)}$.
\end{enumerate}

We would like to note that the approximation estimate in \eqref{linearfn} is related to an earlier result by Petrushev \cite{petrushev1998approximation}. For the ReLU$^k$ activation function, the result in \cite{petrushev1998approximation} yields to an approximation rate estimate similar to \eqref{linearfn} by using a different construction of the weights. Specifically, it uses the tensor product points $\displaystyle\left\{\binom{w_j}{b_i}\right\}_{\substack{1\leq j\leq n_1\\1\leq i\leq n_2}}$, where $\{w_j\}_{j=1}^{n_1}$ 
are suitable quadrature points on $\SS^{d-1}$ and $\{b_i\}_{i=1}^{n_2}$ are well-distributed points on $[-1,1]$. As demonstrated in Corollary \ref{cor:petru} below, our result can be used to directly derive the  result in \cite{petrushev1998approximation} for the ReLU$^k$ case. However, it is important to note that our result cannot be derived from those in \cite{petrushev1998approximation}, nor can they be obtained using the proof techniques in \cite{petrushev1998approximation}. In fact, the proof in our paper differs significantly from that used in \cite{petrushev1998approximation}.

Most importantly, our analysis yields the following key coefficient estimate
    \begin{equation}\label{eqn:esti_coef_a}
    \sqrt{n}\|a\|_2\lesssim\|f\|_{\mathcal{H}^{\frac{d+2k+1}{2}}(\Omega)}.
\end{equation}
As studied in \cite{mao2025neural}, the bound on the coefficients such as \eqref{eqn:esti_coef_a} is essential in approximation theory in general. For example, it plays a central role in the generalization analysis (see Section \ref{sec:gene_analy}). More importantly, without a bound like \eqref{eqn:esti_coef_a}, approximation error estimates may lose their practical relevance \cite{mao2025neural}. In fact, \cite{lu2021deep} (see also \cite{mao2025neural}) showed that deep neural networks with a fixed number of parameters can achieve universal approximation when the weights are unbounded. Therefore, a bound estimate for coefficients is indispensable for both theoretical and practical applications.

Given the continuous embedding $\mathcal{H}^{\frac{d+2k+1}{2}}(\Omega) \hookrightarrow \mathcal{B}^k(\Omega)$ \cite{mao2024approximation}, the metric entropy of $\mathcal{H}^{\frac{d+2k+1}{2}}(\Omega)$ is asymptotically same as that of $\mathcal{B}^k(\Omega)$ (see \eqref{eqn:same_entropy}). Thus, linearized shallow networks achieve approximation rates comparable to the nonlinear class $\Sigma_{n,M}^k$. Comparing this characterization with $\mathcal{B}^k(\Omega)$ clarifies which functions are approximable by $\Sigma_{n,M}^k$ but not by $L_{n,M}^k$.

A consequence of \eqref{eqn:int_rep_sob_intr} is that for $f \in \mathcal{H}^{\frac{d+2k+1}{2}}(\Omega)$, there exists $\psi \in \mathcal{L}^2(\SS^d)$ (not necessarily unique) such that
\begin{equation}\label{eqn:f_expect_repre}
f(x)=\mathbb{E}\bigl[\psi(\theta)\sigma_k\bigl(\theta\cdot\tilde x\bigr) \bigr],
\end{equation}
where the expectation is over the uniform distribution on $\SS^d$. This identity provides a theoretical foundation for estimating approximation error in neural networks with random parameters—also known as random feature methods (see Section \ref{sec:random}). In particular, if $\{\theta_j\}_{j=1}^n$ are i.i.d. uniform samples on $\SS^d$, then
    \begin{equation}
        \mathbb{E}_n\Bigl[\inf\limits_{a\in\RR^n}\Bigl\|f(x)-\sul_{j=1}^na_j\sigma_k(\theta_j\cdot\tilde x) \Bigr\|_{\mathcal{L}^2(\Omega)}\Bigr]\lesssim n^{-\frac12}\|f\|_{\mathcal{H}^{\frac{d+2k+1}{2}}(\Omega)}.
    \end{equation}
    Our refined rate \eqref{linearfn} improves this bound to $\mathcal{O}\Big(\big(\frac{n}{\log n}\big)^{-\frac{1}{2}-\frac{2k+1} {2d}}\Big)$ via intricate analysis.
    
These insights also help explain the success of randomized approaches, which fix the inner weights ${\theta_j}$ independently of the data. This framework includes stochastic basis selection \cite{pao1994learning,Igelnik1995}, extreme learning machines \cite{Huang2006, huang2006universal}, random feature methods \cite{rahimi2007random, rahimi2008weighted, rahimi2008uniform}, and randomized neural networks \cite{saxe2011random, giryes2016deep}. Their theoretical underpinnings have been further studied in \cite{bach2017equivalence, li2019towards, gerace2020generalisation, hu2022universality, mei2022generalization}. Recent applications to numerical PDEs appear in \cite{dwivedi2020physics, dong2021local, nelsen2021random, chen2022bridging, dang2024local, wang2024extreme, chi2024random, zhang2024transferable}. Our results indicate that the effectiveness of these methods is primarily due to the randomly generated weights are nearly well-distributed (see \eqref{eqn:nonlinear_lower}), rather than any intrinsic randomness.

As another application of \eqref{linearfn}, we estimate the generalization error for learning $f \in \mathcal{H}^{\frac{d+2k+1}{2}}(\Omega)$. Let $\{(x_i, f(x_i))\}_{i=1}^m$ be i.i.d. uniform samples from $\Omega$. Under the setting of Theorem \ref{thm:generalization}, the bound holds:
\begin{equation}\label{eqn:gener_err}
    \mathbb{E}_{x_1,\dots,x_m}\Big[\|f_{n,m} - f\|_{\mathcal H^1(\Omega)}^2\Big]\lesssim (\|h\|_{\mathcal{L}^\infty(\Omega)}+M)Mm^{-\frac{1}{2}}.
\end{equation}
Here $f_{n,m}$ is the empirical risk minimizer:
\begin{equation}
    f_{n,m}:=\arg\min\limits_{f_n\in L_{n,M}^k}\frac{1}{m}\sul_{i=1}^m(f(x_i)-f_n(x_i))^2.
\end{equation}

Comparing with classical finite element space $V_n^k$ (piecewise polynomials of degree $\le k$ on $n$ elements \cite{Ciarlet1978}), our results show an advantage. Given the lower bound $\mathcal O(n^{-\frac{k+1}{d}})$ for finite elements \cite{lin2014lower}, there exists $f\in\mathcal{H}^{\frac{d+2k+1}{2}}(\Omega)$ such that
\begin{equation}
    \inf_{f_n\in L_n^k}\|f-f_n\|_{\mathcal{L}^2(\Omega)}=\mathcal{O} \left(n^{-\frac{1}{2}(1-\frac{1}{d})}\right)\inf_{f_n\in V_n^k}\|f-f_n\|_{\mathcal{L}^2(\Omega)}.
\end{equation}
This highlights that for smooth functions, $V_n^k$ suffers from the curse of dimensionality, while $L_n^k$ does not, despite both using piecewise polynomials of degree $k$.

We also prove error estimates similar to \eqref{linearfn} for lower-order Sobolev spaces:
\begin{equation}\label{linearSobolev}
    \inf_{f_n\in L_n^k}\|f-f_n\|_{\mathcal{L}^2(\Omega)}=\mathcal{O}(n^{-\frac{\rr}{d}})\|f\|_{\mathcal{H}^\rr(\Omega)},\qquad r\leq\frac{d+2k+1}{2}.
\end{equation}
The rate $\mathcal{O}(n^{-\frac{\rr}{d}})$ matches optimal rates for Sobolev spaces by classic methods like polynomials, Fourier series \cite{devore1993constructive}, and finite elements \cite{Ciarlet1978}. Early works \cite{pinkus1999approximation} showed this rate for shallow networks with smooth sigmoidal activations $\sigma$ (see also \cite{mhaskar1993approximation,petrushev1998approximation}).

Comparing with the $\mathcal{L}^2$-rate for shallow ReLU$^k$ networks \cite{mao2024approximation} (see also \cite{mao2023rates,yang2024optimal} for $\mathcal{L}^\infty$-norm)
\begin{equation}\label{nonlinearSobolev}
  \inf_{f_n\in \Sigma_n^k}
  \|f - f_n\|_{\mathcal{L}^2(\Omega)}
  \;=\mathcal{O}(
  n^{-\frac{\rr}{d}})\;\|f\|_{\mathcal{H}^\rr(\Omega)},
  \quad \rr\leq\frac{d+2k+1}{2},
\end{equation}
we conclude that optimal nonlinear estimates \eqref{nonlinearSobolev} are fully realized by linear approximation in Sobolev spaces. Rate \eqref{nonlinearSobolev} is also optimal up to log factors (Theorem \ref{thm:random_linear}).

This paper is organized as follows. Section \ref{sec:notation&main} introduces notation and main results. Section \ref{sec:harmonic_legendre} reviews spherical harmonics and Legendre polynomials essential for the proofs. Section \ref{sec:pf_sphere} establishes approximation results on the sphere using linear ReLU$^k$ spaces with well-distributed weights. Section \ref{sec:proof_ball} extends these results to general domains $\Omega\subset\RR^d$, completing the main proofs. Section \ref{sec:random} connects linearized shallow networks and random feature methods, showing the latter's approximation rate follows from the former. Section \ref{sec:gene_analy} applies our approximation results and coefficient estimates to compute the generalization error for an elliptic PDE.


\section{Background and main results}\label{sec:notation&main}

Before presenting our main results, we introduce the notation used in this paper and provide brief discussions of main results in this paper. Some of these notations were previously mentioned in Section \ref{sec:intro}, where they were assumed to be familiar to the reader. Here, we provide explicit definitions for clarity.

First of all, following \cite{xu1992iterative}, we adapt the notation $\gtrsim$, $\lesssim$, and $\simeq$, which express upper and lower bounds up to constant factors.  When we write
$$
f(x)\gtrsim g(x),\quad g(x)\lesssim h(x),\quad h(x)\simeq k(x)
$$
it means that there exist constants $c_1,c_2,c_3,c_4$ independent of $x$ such that
$$
f(x)\ge c_1 g(x),\quad g(x)\le c_2 h(x),\quad c_3 h(x)\le k(x)\le c_4 h(x).
$$


         

\subsection{Shallow neural networks, Barron spaces and Sobolev spaces}\label{subsec:barronspace}

Given an activation function $\sigma: \mathbb{R}\to \mathbb{R}$ and $G\subset\RR^{d+1}$, we denote the dictionary
$$\mathbb{D}_\sigma:=\left\{\pm\sigma(\theta\cdot\tilde x):~\theta\in G\right\},$$
and the $\mathcal{L}^2$-closure of its convex hull $B_1:=\overline{\mathrm{conv}(\mathbb{D}_\sigma)}$. Following \cite{siegel2022sharp} (see also \cite{Jones1992,devore1996some,kurkova1,kurkova2,lewicki2004approximation,temlyakov2008greedy,Barron2008}), we denote the Barron space $\mathcal{B}^\sigma(\Omega)$ as follows:
\begin{equation}\label{eqn:def_Barron}
    \mathcal{B}^\sigma(\Omega):=\{f\in\mathcal{L}^2(\Omega):~\|f\|_{\mathcal{B}^\sigma(\Omega)}<\infty\},
\end{equation}
where
\begin{equation*}
    \|f\|_{\mathcal{B}^\sigma(\Omega)}=\inf\left\{t>0:~f\in tB_1\right\}.
\end{equation*}

In particular, when $\sigma=\sigma_k$, we take $G=\SS^d$ as $\sigma_k$ is homogeneous. In this case, we denote
\begin{equation}\label{eqn:barron_norm_closure}
    \mathcal{B}^{k}(\Omega)=\mathcal{B}^{\sigma_k}(\Omega).
\end{equation}
The Barron spaces can be characterized by some qualitative approximation property of the following compact subset of the \reluk neural networks \eqref{shallowkM}. We state the relation between the \reluk neural networks and Barron spaces as the following theorem.

     \begin{theorem}[\cite{siegel2022sharp,siegel2023characterization}]\label{thm:SiegelXu}
     Let $d\in\NN$, $\Omega\subset\RR^d$ be a bounded domain.
     \begin{enumerate}
     \item $f\in\mathcal{B}^k(\Omega)$ if and only if, with some $M\simeq\|f\|_{\mathcal{B}^k(\Omega)}$,
     \begin{equation}\label{eqn:equiv_M_vari}
         \lim\limits_{n\to\infty}\inf\limits_{f_n\in\Sigma_{n,M}^k}\|f-f_n\|_{\mathcal{L}^2(\Omega)}=0.
     \end{equation}
    \item The Barron space $\mathcal{B}^k(\Omega)$ can be alternatively written as

     \begin{equation}\label{eqn:barron_integ_def}
         \mathcal{B}^k(\Omega)=\lt\{\int_{\SS^d}\sigma_k(\theta\cdot\tilde x)d\mu(\theta):~\mu\in\mathcal{M}(\SS^d)\rt\}.
    \end{equation}
    Furthermore,
    \begin{equation}\label{eqn:barron_norm_int}
        \|f\|_{\mathcal{B}^k(\Omega)}\simeq\inf\limits_{\mu\in\mathcal{M}(\SS^d)}\lt\{|\mu|(\SS^d):~f(x)=\int_{\SS^d}\sigma_k(\theta\cdot\tilde x)d\mu(\theta)\rt\}.
    \end{equation}
     \item Let $f\in\mathcal{B}^k(\Omega)$, with some $M\simeq\|f\|_{\mathcal{B}^k(\Omega)}$
         \begin{equation}\label{variation-space-approx-rate}
             \inf\limits_{f_n\in\Sigma_{n,M}^k}\lt\|f-f_n\rt\|_{\mathcal{L}^2(\Omega)} \lesssim n^{-\frac{1}{2}-\frac{2k+1}{2d}} \|f\|_{\mathcal{B}^k(\Omega)},
         \end{equation}
         All the corresponding constants are independent of $n$, $\{\theta_j^*\}_{j=1}^n$, and $f$.
     \end{enumerate}     
     \end{theorem}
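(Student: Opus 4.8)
The plan is to treat the three items in increasing order of difficulty. Items~2 and~1 are soft consequences of the structure of $\mathcal{L}^2$-closed convex hulls of compact dictionaries; item~3 is the quantitative statement --- the sharp bound of \cite{siegel2022sharp} --- and is where essentially all of the work lies.

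\emph{Item 2 (integral representation).} Since $\Omega$ is bounded, $|\tilde x|$ is bounded on $\Omega$, so $|\sigma_k(\theta\cdot\tilde x)|$ is uniformly bounded and, by dominated convergence, $\theta\mapsto\sigma_k(\theta\cdot\tilde x)$ is continuous from $\SS^d$ into $\mathcal{L}^2(\Omega)$; hence $\mathbb{D}_{\sigma_k}$ is a compact subset of $\mathcal{L}^2(\Omega)$. I would then use the classical fact that for a compact set $K$ in a Banach space, $\overline{\mathrm{conv}(K)}$ equals the set of barycenters $\int_K v\,d\nu(v)$ (Bochner integrals) of Borel probability measures $\nu$ on $K$: the barycenter map is continuous from the weak-$\ast$-compact set of such $\nu$ into $X$ with its weak topology, so its image is convex, weakly compact, hence norm-closed, and plainly contains $K$, so it contains $\overline{\mathrm{conv}(K)}$; conversely every barycenter is a norm-limit of finite convex combinations. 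Identifying probability measures on $\mathbb{D}_{\sigma_k}$ with probability measures on $\SS^d\times\{\pm1\}$ and pushing them forward to $\SS^d$ with sign identifies $\overline{\mathrm{conv}(\mathbb{D}_{\sigma_k})}$ with $\{\int_{\SS^d}\sigma_k(\theta\cdot\tilde x)\,d\mu(\theta):\mu\in\mathcal{M}(\SS^d),\ |\mu|(\SS^d)\le1\}$, the nontrivial inclusion using the Jordan decomposition of $\mu$ and, when $|\mu|(\SS^d)<1$, padding by a cancelling pair $\pm\tfrac{1}{2}(1-|\mu|(\SS^d))\,\sigma_k(\theta_0\cdot\tilde x)$. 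Since $\|f\|_{\mathcal{B}^k(\Omega)}=\inf\{t>0:f\in tB_1\}$ and $\sigma_k$ is homogeneous, scaling by $t$ turns this into \eqref{eqn:barron_integ_def} and \eqref{eqn:barron_norm_int}, with equality for the normalization $G=\SS^d$ used here.

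\emph{Item 1 (qualitative characterization).} Here $\bigcup_{n\ge1}\Sigma_{n,M}^k=M\,\mathrm{conv}(\mathbb{D}_{\sigma_k})$, since a sum $\sum_j a_j\sigma_k(\theta_j\cdot\tilde x)$ with $\sum_j|a_j|\le M$ is $M$ times a convex combination of signed dictionary elements, padded by a cancelling pair when $\sum_j|a_j|<M$. Hence its $\mathcal{L}^2(\Omega)$-closure is $MB_1$, and since $\inf_{f_n\in\Sigma_{n,M}^k}\|f-f_n\|_{\mathcal{L}^2(\Omega)}$ is nonincreasing in $n$ (the classes are nested), its limit equals $\mathrm{dist}_{\mathcal{L}^2(\Omega)}(f,MB_1)$, which vanishes iff $f\in MB_1$ iff $\|f\|_{\mathcal{B}^k(\Omega)}\le M$. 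Thus if $f\in\mathcal{B}^k(\Omega)$ the choice $M=\|f\|_{\mathcal{B}^k(\Omega)}$ works, while vanishing of the limit for some finite $M$ forces $f\in\mathcal{B}^k(\Omega)$; the least admissible $M$ is $\|f\|_{\mathcal{B}^k(\Omega)}$, giving $M\simeq\|f\|_{\mathcal{B}^k(\Omega)}$.

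\emph{Item 3 (sharp rate).} Normalize $\|f\|_{\mathcal{B}^k(\Omega)}=1$ and, by item~2, write $f=\int_{\SS^d}\sigma_k(\theta\cdot\tilde x)\,d\mu(\theta)$ with $|\mu|(\SS^d)\le1$. Maurey's sampling method --- draw $n$ i.i.d.\ parameters from $|\mu|/|\mu|(\SS^d)$ and average the signed dictionary elements, so that the $\ell^1$-norm of the coefficients is $\le1$ --- already gives the rate $n^{-1/2}$ in expectation. The improvement to $n^{-1/2-(2k+1)/(2d)}$ must exploit the finite smoothness $\sigma_k\in C^{k-1,1}$ together with the $d$-dimensionality of $\SS^d$. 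The underlying mechanism is multiscale: partitioning $\SS^d$ into caps of radius $\delta$, the degree-$k$ Taylor remainder of $\theta\mapsto\sigma_k(\theta\cdot\tilde x)$ on a cap is $O(\delta^k)$ and, because $\sigma_k^{(k)}$ is a scaled Heaviside, is supported in $x$ on a slab of measure $\lesssim\delta$, hence $O(\delta^{k+1/2})$ in $\mathcal{L}^2(\Omega)$; this shows $B_1$ behaves like the variation ball of an $H^{k+1/2}$-smooth feature family on $\SS^d$, with metric entropy $\varepsilon_n(B_1)\lesssim n^{-1/2-(2k+1)/(2d)}$ in $\mathcal{L}^2(\Omega)$. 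From here one either invokes a general principle converting such an entropy bound for $B_1$ into the same-order best $n$-term approximation rate, or --- as in \cite{siegel2022sharp} --- runs a stratified version of Maurey's argument scale by scale, with the number of samples per scale balanced against $\delta^{k+1/2}$ and the coefficient budget tracked so that $M\simeq\|f\|_{\mathcal{B}^k(\Omega)}$; either route yields \eqref{variation-space-approx-rate}. A matching entropy lower bound --- from the polynomial decay (governed by $k$ and $d$) of the Gegenbauer coefficients of $\sigma_k$ on $[-1,1]$ via Funk--Hecke --- shows the exponent is optimal. The main obstacle is precisely this: quantifying how the finite smoothness of $\sigma_k$ thins out the convex hull $B_1$ over the $d$-sphere so that the order $k+\tfrac{1}{2}$ and the dimension $d$ combine into exactly the exponent $\tfrac{1}{2}+\tfrac{2k+1}{2d}$, while keeping every intermediate building block inside a bounded multiple of the variation ball; this is the technical heart of \cite{siegel2022sharp}.
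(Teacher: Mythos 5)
The paper does not prove Theorem~\ref{thm:SiegelXu} at all: it is stated as a citation to \cite{siegel2022sharp,siegel2023characterization} and used as a known fact. So there is no ``paper's own proof'' to compare against, and your blind attempt can only be judged on its internal merits and against the cited sources.

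Your arguments for items~1 and~2 are sound and match the structure of \cite{siegel2023characterization}: compactness of $\theta\mapsto\pm\sigma_k(\theta\cdot\tilde x)$ in $\mathcal{L}^2(\Omega)$, identification of $\overline{\mathrm{conv}(\mathbb{D}_{\sigma_k})}$ with barycenters of probability measures on $\SS^d\times\{\pm1\}$, pushforward to signed measures, and the nested-classes/distance-to-$MB_1$ observation. These are complete modulo the standard Choquet/Bochner facts you invoke. For item~3 you are upfront that you are describing the shape of the argument rather than giving one, and what you describe --- Maurey for $n^{-1/2}$, then a multiscale refinement exploiting $\sigma_k\in C^{k-1,1}$ and the slab-localization of $\sigma_k^{(k)}$ to gain the $(2k+1)/(2d)$ --- is indeed the mechanism in \cite{siegel2022sharp}. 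One imprecision worth flagging: you offer as an alternative route ``a general principle converting such an entropy bound for $B_1$ into the same-order best $n$-term approximation rate.'' No such general implication exists in the direction you need; metric entropy of $B_1$ gives the matching \emph{lower} bound for the $n$-term rate, whereas the upper bound must be built constructively. What does give an upper bound is a Makovoz/stratified-Maurey argument driven by the metric entropy of the \emph{dictionary} $\mathbb{D}_{\sigma_k}$ (the $\mathcal{L}^2$-H\"older order $k+\tfrac12$ of $\theta\mapsto\sigma_k(\theta\cdot\tilde x)$ on $\SS^d$), with the coefficient budget tracked to keep $M\simeq\|f\|_{\mathcal{B}^k(\Omega)}$; that is your second route, which is the correct one and is carried out in \cite{siegel2022sharp}.
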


Even if we remove the restriction on the coefficients, the rate \eqref{variation-space-approx-rate} is optimal up to logarithmic factors \cite{siegel2022sharp}:
\begin{equation}\label{eqn:nonlinear_lower}
    \sup\limits_{\|f\|_{\mathcal{B}^k(\Omega)}\leq1}\inf_{f_n\in \Sigma_n^k}\|f-f_n\|_{\mathcal{L}^2(\Omega)}\ge c \left(n\log n\right)^{-\frac{1}{2}-\frac{2k+1}{2d}}.
\end{equation}
We also refer to \cite{maiorov2000near,konyagin2018some,bartlett2019nearly,siegel2022optimal1} for relevant results. 

We now introduce the Sobolev spaces $\mathcal{H}^\rr(\Omega)$. Let $\Omega\subset\RR^d$ be bounded, for $\rr\in\NN$, the Sobolev spaces are defined as
\begin{equation}
    \mathcal{H}^\rr(\Omega)=\left\{f\in\mathcal{L}^2(\Omega):~\|f\|_{\mathcal{H}^{\rr}(\Omega)}<\infty\right\}.
\end{equation}
where norm is given by
\begin{equation}\label{eqn:def_sob}
    \|f\|_{\mathcal{H}^{\rr}(\Omega)}=\Bigl(\|f\|_{\mathcal{L}^2(\Omega)}^2+\sum\limits_{{\alpha}\in\NN_0^d,\|{\alpha}\|_1=\rr}\|D^{\alpha} f\|_{\mathcal{L}^2(\Omega)}^2\Bigr)^{\frac{1}{2}}.
\end{equation}
For $\rr\notin\NN$, $\mathcal{H}^\rr(\Omega)$ is defined through a standard interpolation (see, e.g., \cite{bergh2012interpolation}) of $\mathcal{H}^{\lfloor\rr\rfloor}(\Omega)$ and $\mathcal{H}^{\lceil\rr\rceil}(\Omega)$.

In this paper, we establish an approximation rate of $\mathcal{O}(n^{-\frac{\rr}{d}})$ for functions in the Sobolev space $\mathcal{H}^\rr(\Omega)$ under the condition that these parameters are well-distributed on $\SS^d$.
\begin{definition}[Well-distributed and quasi-uniform]
    Let $d\in\NN$, a set of points $\{\theta_j^*\}_{j=1}^n\subset\SS^d$ is said to be well-distributed if
    \begin{equation}\label{eqn:welldistr}
        \mal_{\theta\in\SS^d}\min\limits_{1\leq j\leq n}\rho(\theta,\theta_j^*)\lesssim n^{-\frac{1}{d}},
    \end{equation}
    Moreover, a well-distributed collection $\{\theta_j^*\}_{j=1}^n$ is said to be quasi-uniform if
    \begin{equation}\label{eqn:quasiuniform}
        \mal_{\theta\in\SS^d}\min\limits_{1\leq j\leq n}\rho(\theta,\theta_j^*)\lesssim\min\limits_{i\neq j}\rho(\theta_i^*,\theta_j^*).
    \end{equation}
    The corresponding constants are independent of $n$.
\end{definition}


\subsection{Main results}

We now present the main results of this paper. The first theorem focuses on analyzing the approximation properties of the ReLU$^k$ activation function with predetermined parameters $\{\theta_j^*\}_{j=1}^n$.

\begin{theorem}\label{thm:appr_rate_ball}
    Let $d,n\in\NN$, $k\in\NN_0$, $\Omega \subset \mathbb{R}^d$ be a bounded domain with Lipschitz boundary, and $\{\theta_j^*\}_{j=1}^n\subset\SS^d$. Then for any $f\in\mathcal{H}^{\frac{d+2k+1}{2}}(\Omega)$, with some     $$M\simeq\|f\|_{\mathcal{H}^{\frac{d+2k+1}{2}}(\Omega)},$$    \begin{equation}\label{eqn:main_res_nonuniform}
        \inf\limits_{f_n\in L_{n,M}^k }\bigl\|f-f_n\bigr\|_{\mathcal{L}^2(\Omega)}\lesssim h^{\frac{d+2k+1}{2}}\lt\|f\rt\|_{\mathcal{H}^{\frac{d+2k+1}{2}}(\Omega)}.
    \end{equation}
    where
    $$h=\mal_{\theta\in\SS^d}\min\limits_{1\leq j\leq n}\rho(\theta,\theta_j^*).$$
    More generally, let $\rr\leq\frac{d+2k+1}{2}$ and $\ss\leq\min\{k,\rr\}$. Then for any $f\in\mathcal{H}^\rr(\Omega)$, with some $\displaystyle M\simeq h^{-\frac{d+2k+1-2\rr}{2}}\|f\|_{\mathcal{H}^{\rr}(\Omega)},$
    \begin{equation}\label{eqn:main_Omega_beta}
        \inf\limits_{f_n\in L_{n,M}^k }\bigl\|f-f_n\bigr\|_{\mathcal{H}^\ss(\Omega)}\lesssim h^{\rr-\ss}\lt\|f\rt\|_{\mathcal{H}^\rr(\Omega)}.
    \end{equation}

    If the collection $\{\theta_j^*\}_{j=1}^n\subset\SS^d$ is well-distributed, then for any $f\in\mathcal{H}^{\frac{d+2k+1}{2}}(\Omega)$, with some
    $M\simeq\|f\|_{\mathcal{H}^{\frac{d+2k+1}{2}}(\Omega)}$,
    we have
    \begin{equation}\label{eqn:rate_d+2k+1}
        \inf\limits_{f_n\in L_{n,M}^k }\bigl\|f-f_n\bigr\|_{\mathcal{L}^2(\Omega)}\lesssim n^{-\frac{1}{2}-\frac{2k+1}{2d}}\lt\|f\rt\|_{\mathcal{H}^{\frac{d+2k+1}{2}}(\Omega)}.
    \end{equation}   
    All the corresponding constants are independent of $n$, $\{\theta_j^*\}_{j=1}^n$, and $f$.
\end{theorem}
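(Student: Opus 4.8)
The plan is to transfer everything to the sphere $\SS^d$, use the spherical‑harmonic / Funk--Hecke structure of the ridge kernel, and build the approximant by discretizing the integral representation \eqref{eqn:int_rep_sob_intr} at the prescribed nodes $\{\theta_j^*\}$.

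\emph{Reduction to the sphere (Section \ref{sec:proof_ball}).} With $\widehat{\tilde x}=\tilde x/|\tilde x|\in\SS^d$ and the homogeneity $\sigma_k(\theta\cdot\tilde x)=|\tilde x|^k\sigma_k(\theta\cdot\widehat{\tilde x})$, the map $x\mapsto\widehat{\tilde x}$ is a diffeomorphism of $\RR^d$ onto an open half‑sphere $\Gamma\subset\SS^d$ carrying the bounded Lipschitz domain $\Omega$ onto $\Omega'\Subset\Gamma$. A Stein extension of $f$ to $\RR^d$ followed by a cutoff gives $F\in\mathcal H^\rr(\SS^d)$ supported in $\Gamma$ with $\|F\|_{\mathcal H^\rr(\SS^d)}\lesssim\|f\|_{\mathcal H^\rr(\Omega)}$ (the weights $|\tilde x|^{\pm k}$ and the Jacobian are smooth and bounded on the relevant region, so this transfer is $\mathcal H^\rr$‑bicontinuous). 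Conversely, from any sphere approximant $F_n=\sum_j a_j\sigma_k(\theta_j^*\cdot\xi)$ one recovers $f_n(x):=|\tilde x|^kF_n(\widehat{\tilde x})=\sum_j a_j\sigma_k(\theta_j^*\cdot\tilde x)\in L_n^k$ with the \emph{same} coefficients, so the constraint $(n\sum_j a_j^2)^{1/2}\le M$ and all Sobolev error bounds pass through. Everything thus reduces to an approximation statement on $\SS^d$.

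\emph{Harmonic analysis of the kernel (Section \ref{sec:harmonic_legendre}).} Expanding in spherical harmonics and applying Funk--Hecke, $\sigma_k(\theta\cdot\xi)=\sum_{\ell\ge0}\lambda_{k,\ell}Z_\ell(\theta\cdot\xi)$ with $Z_\ell$ the degree‑$\ell$ zonal (reproducing) kernel and $\lambda_{k,\ell}\simeq\ell^{-\frac{d+2k+1}{2}}$ for every $\ell$ of the relevant parity (the finitely many low‑degree exceptions are absorbed by a separate polynomial correction). Hence $\mathcal T\psi:=\fint_{\SS^d}\sigma_k(\theta\cdot\cdot)\psi(\theta)\,d\theta$ is a Fourier multiplier with symbol $\lambda_{k,\ell}$, and is an isomorphism $\mathcal H^\ss(\SS^d)\to\mathcal H^{\ss+\frac{d+2k+1}{2}}(\SS^d)$; this is exactly \eqref{eqn:int_rep_sob_intr}, and it is why the critical smoothness is $\frac{d+2k+1}{2}$. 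For $\rr\le\frac{d+2k+1}{2}$ we write $F=\mathcal T\psi$ with $\psi\in\mathcal H^{\rr-\frac{d+2k+1}{2}}(\SS^d)$, $\|\psi\|\simeq\|F\|_{\mathcal H^\rr(\SS^d)}$, and $\|\psi_{\le L}\|_{\mathcal L^2}\lesssim L^{\frac{d+2k+1-2\rr}{2}}\|F\|_{\mathcal H^\rr}$ by the decreasing weights.

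\emph{Construction and error estimates on $\SS^d$ (Section \ref{sec:pf_sphere}).} Put $L\simeq 1/h$. Since the covering radius of $\{\theta_j^*\}$ is $\le h\lesssim 1/L$, this is a norming set for the space $\Pi_L$ of degree‑$\le L$ spherical polynomials, so the linear system $\sum_j a_j Y_{\ell,m}(\theta_j^*)=\widehat\psi_{\ell,m}$ ($\ell\le L$) is solvable; take $a$ of minimal $\ell^2$‑norm and set $F_n=\sum_j a_j\sigma_k(\theta_j^*\cdot\xi)$. Then $F_n$ and $F$ have identical spherical‑harmonic components of degree $\le L$, so $F-F_n=(F)_{>L}-(F_n)_{>L}$, and three estimates close the argument. (a) \emph{Coefficient bound / \eqref{eqn:esti_coef_a}:} a Marcinkiewicz--Zygmund inequality gives $\sigma_{\min}(\mathbf Y_{\le L})^2\gtrsim h^{-d}$ from covering (for the upper singular value one uses quasi‑uniformity, or passes to a maximal separated subnet), hence $\|a\|_2^2\le\sigma_{\min}^{-2}\|\widehat\psi_{\le L}\|_2^2\lesssim h^{d}\|\psi_{\le L}\|_{\mathcal L^2}^2$, which combined with Step 2 yields $\sqrt n\,\|a\|_2\lesssim h^{-\frac{d+2k+1-2\rr}{2}}\|f\|_{\mathcal H^\rr}$ ($\simeq\|f\|$ when $\rr=\frac{d+2k+1}{2}$ and $nh^d\lesssim1$), i.e.\ the stated $M$. (b) \emph{Kernel tail:} $\|(F)_{>L}\|_{\mathcal H^\ss}^2=\sum_{\ell>L}(1+\ell^2)^\ss\lambda_{k,\ell}^2\|\widehat\psi_\ell\|^2\lesssim L^{-2(\rr-\ss)}\|\psi\|_{\mathcal H^{\rr-\frac{d+2k+1}{2}}}^2\lesssim h^{2(\rr-\ss)}\|F\|_{\mathcal H^\rr}^2$ since $\ss\le\rr$. (c) \emph{Coefficient leakage:} with $R_j:=[\sigma_k(\theta_j^*\cdot\cdot)]_{>L}$, duality against high‑frequency test functions and the mapping property of $\mathcal T$ give $\|(F_n)_{>L}\|_{\mathcal H^\ss}^2=a^{T}\mathbf R\,a\le\|a\|_2^2\cdot\sup\{\sum_j|R(\theta_j^*)|^2:\ R\in\bigoplus_{\ell>L}\mathcal H_\ell,\ \|R\|_{\mathcal H^{\frac{d+2k+1}{2}-\ss}}\lesssim1\}$, and a sampling inequality for separated nodes (estimating point values of a purely high‑frequency $R$ by $\sum_{|\alpha|\le m}h^{2|\alpha|-d}\|D^\alpha R\|^2$ with $m>d/2$, and a fractional variant when $k=0$) bounds this supremum by $h^{2k+1-2\ss}$; with (a) this gives $\|(F_n)_{>L}\|_{\mathcal H^\ss}\lesssim h^{\rr-\ss}\|F\|_{\mathcal H^\rr}$. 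Adding (b) and (c) and transferring back yields \eqref{eqn:main_Omega_beta}; the special case $\ss=0,\ \rr=\frac{d+2k+1}{2}$ is \eqref{eqn:main_res_nonuniform}, and substituting $h\lesssim n^{-1/d}$ gives \eqref{eqn:rate_d+2k+1}.

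\emph{Main obstacle.} The delicate point is (c): one needs a sharp large‑sieve/sampling estimate at scattered nodes applied to functions whose spectrum lies entirely above the Nyquist degree $\simeq1/h$, and it must be combined with the coefficient bound (a) and the reproduction degree $L$ without losing the final half‑power of $h$. Concretely, (a) wants $\sigma_{\min}(\mathbf Y_{\le L})^2\gtrsim h^{-d}$, which forces $L$ not too far above $1/h$, while (c) wants $\sum_j|R(\theta_j^*)|^2\lesssim\ell^{d-1}\|R\|_{\mathcal L^2}^2$ for all $\ell>L$, which fails in the transitional band $\ell\simeq1/h$; reconciling the two (e.g.\ replacing the sharp reproduction up to degree $L$ by a smooth de la Vallée Poussin–type reproduction, and using separation crucially in the sampling inequality) is the technical heart of Section \ref{sec:pf_sphere}, with the $k=0$ case requiring fractional‑order versions of all the relevant inequalities.
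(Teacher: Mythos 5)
Your overall architecture matches the paper's: reduce to $\SS^d$ via homogeneity of $\sigma_k$ (the paper's $S_k,T_k$ operators and Lemma \ref{lem:ball_sphere}), expand the ridge kernel in spherical harmonics with Funk--Hecke eigenvalues $\widehat{\sigma_k}(m)^2\simeq m^{-(d+2k+1)}$ on the admissible parity classes (Lemma \ref{lem:Bach}), force exact agreement of the spherical-harmonic coefficients up to degree $J\simeq 1/h$, and then split the error into a tail-of-$f$ term $I_1$ and a coefficient-leakage term $I_2$. You even construct $a$ by solving the reproduction system, which the paper accomplishes more explicitly through the positive quadrature formula of Lemma \ref{lem:quadrature}; that makes $a_j=\tau_j\sum_{m\le J}\widehat{\sigma_k}(m)^{-1}(\Pi_mf)(\theta_j^*)$ explicit and gives the $\|a\|_2$ bound cleanly via Parseval (your minimal-norm-solution/Marcinkiewicz--Zygmund route lands in the same place).

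However, there is a genuine gap at exactly the point you flag as the ``main obstacle,'' i.e.\ your step (c), the estimate of the leakage $\|(F_n)_{>L}\|_{\mathcal H^\ss}^2=a^\top\mathbf R\,a$ where $\mathbf R_{ij}=\sum_{m>J}(m^{2\ss}+1)\widehat{\sigma_k}(m)^2 p_m(\theta_i^*\cdot\theta_j^*)$. You propose to bound this by a sampling inequality for high-frequency spherical polynomials at the scattered nodes, and you correctly observe that this ``fails in the transitional band $\ell\simeq 1/h$''; the de la Vall\'ee Poussin smoothing you mention is not what resolves it. The paper's resolution is different in substance: it keeps the sharp reproduction up to degree $J$ and attacks $\sum_{m>J}\xi(m)P(m)$ (with $\xi(m)=\widehat{\sigma_k}(m)^2m^{2\ss}$) by repeated Abel summation, converting it to $\sum_{m>J}(-1)^\alpha(\Delta^\alpha\xi)(m)P_\alpha(m)$ with $\alpha=\lceil(d+2)/2\rceil$. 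This exploits two structural facts that your sketch does not reach: the complete monotonicity of $\xi$ and its finite differences, $(-1)^\beta\Delta^\beta\xi(m)\simeq m^{2\ss-(d+2k+1)-\beta}$ (Lemma \ref{lem:Bach}), and the off-diagonal decay of the Ces\`aro kernel $P_\alpha(m)$ due to Chanillo's localization estimate (Lemma \ref{lem:est_p_nij}), which yields the crucial operator-norm bound $\|P_\alpha(m)\|_2\lesssim m^{\alpha+d-1}(h/\underline h)^{(d-1)/2+\alpha}$ (Lemma \ref{lem:property_p_nr}(c)). The transitional band $m\simeq 1/h$ is handled precisely because after $\alpha$ summations-by-parts the kernel is localized enough to be diagonally dominant at all degrees $m>J$, so there is no loss of the final half-power of $h$. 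Without this (or an equivalent mechanism), your step (c) is not a proof; the rest of your outline is consistent with the paper, but the theorem does not follow as written.

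Two smaller points. First, you should make the parity constraint explicit: after extension to $\SS^d$ the target $g$ must satisfy $g(\eta)=(-1)^{k+1}g(-\eta)$ (the image of $T_k$ composed with an even/odd extension), otherwise the low-frequency reproduction cannot match the coefficients in the wrong parity (cf.\ Remark \ref{rem:even_odd_necess}). Second, the reduction to a quasi-uniform subset (so that $h\simeq\underline h$) is done at the start of the paper's spherical proof; your remark ``or passes to a maximal separated subnet'' is the right instinct, but it needs to be a concrete first step, since both your step (a) and the Ces\`aro bound in (c) genuinely require $\underline h\gtrsim h$.
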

\begin{remark}\label{rem:exclude_cap}
    The points $\{\theta_j^*\}_{j=1}^n$ do not necessarily need to be distributed over the entire domain $\SS^d$. By applying a shift, we may assume that $0 \in \Omega$. Let $\lambda_\Omega := \mathrm{diam}(\Omega)$. Then, for any $x \in \Omega$, we have
    \begin{eqnarray*}
    &\sigma_k(\theta\cdot\tilde x)=(\theta\cdot\tilde x)^k,\qquad&\theta\in G_+:=\biggl\{\theta\in\SS^d:~\theta_{d+1}\geq\frac{\lambda_\Omega}{\sqrt{1+\lambda_\Omega^2}}\biggr\},\\
            &\sigma_k(\theta\cdot\tilde x)=0,\qquad&\theta\in G_-:=\biggl\{\theta\in\SS^d:~\theta_{d+1}\leq-\frac{\lambda_\Omega}{\sqrt{1+\lambda_\Omega^2}}\biggr\}.
    \end{eqnarray*}
    We take points $\{\vartheta_j^*\}_{j=1}^{\binom{k+d}{d}}$ in $G_+$ such that the collection $\{(\vartheta_j^*\cdot\tilde x)^k\}_{j=1}^{\binom{k+d}{d}}$ are linearly independent. To get the linear approximation rates in Theorem \ref{thm:appr_rate_ball}, it suffices to combine the points $\{\vartheta_j^*\}_{j=1}^{\binom{k+d}{d}}$ with a collection of well-distributed points in
    \begin{equation*}
        \SS^d\setminus(G_+\cup G_-)=\biggl\{\theta\in\SS^d:~|\theta_{d+1}|\leq\frac{\lambda_\Omega}{\sqrt{1+\lambda_\Omega^2}}\biggr\}.
    \end{equation*}
    It is worth noting that $\{\vartheta_j^*\}_{j=1}^{\binom{k+d}{d}}$ can also be replaced by a collection of points $\{\theta_j^*\}_{j=1}^{\binom{k+d}{d}}\cup\{-\theta_j^*\}_{j=1}^{\binom{k+d}{d}}$ in $\SS^d\setminus(G_+\cup G_-)$, since there is the relation $t^k=\sigma_k(t)+(-1)^k\sigma_k(-t)$.
\end{remark}

\begin{corollary}[Theorem 8.2. \cite{petrushev1998approximation}]\label{cor:petru}
    Let $d,n,k,\Omega$ be as in Theorem \ref{thm:appr_rate_ball}, $n_1 \simeq n^{\frac{d-1}{d}}$ and $n_2 \simeq n^{\frac{1}{d}}$, there exists quadrature points $\{w_j^*\}_{j=1}^{n_1}\subset\SS^{d-1}$ and uniform distributed points $\{b_i^*\}_{i=1}^{n_2}\subset[-1,1]$, such that for any $f\in\mathcal{H}^{\frac{d+2k+1}{2}}(\BB^d)$,
    \begin{equation}\label{eqn:petrushev}
    \inf\limits_{a_{i,j}\in\RR,\forall i,j}\Big\|f(x)-\sul_{j=1}^{n_2}\sul_{i=1}^{n_1}a_{i,j}\sigma_k(w_j^*\cdot x+b_i^*)\Big\|_{\mathcal{L}^2(\BB^d)}\lesssim n^{-\frac{1}{2}-\frac{2k+1}{2d}}\lt\|f\rt\|_{\mathcal{H}^{\frac{d+2k+1}{2}}(\BB^d)}.
\end{equation}
\end{corollary}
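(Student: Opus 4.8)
The plan is to obtain the corollary as a direct consequence of the well-distributed case \eqref{eqn:rate_d+2k+1} of Theorem~\ref{thm:appr_rate_ball} together with Remark~\ref{rem:exclude_cap}, after recognizing the tensor-product weights as rescaled normalized points on $\SS^d$. Concretely, for $w_j^*\in\SS^{d-1}$ and $b_i^*\in[-1,1]$ put $\theta_{i,j}^*:=(w_j^*,b_i^*)/\sqrt{1+(b_i^*)^2}\in\SS^d$, so that by homogeneity $\sigma_k(w_j^*\cdot x+b_i^*)=(1+(b_i^*)^2)^{k/2}\,\sigma_k(\theta_{i,j}^*\cdot\tilde x)$; since the factors $(1+(b_i^*)^2)^{k/2}$ lie in $[1,2^{k/2}]$, the span of $\{\sigma_k(w_j^*\cdot x+b_i^*)\}_{i,j}$ equals $L_n^k(\{\theta_{i,j}^*\})$ with $n:=n_1n_2$, and the constrained classes $L_{n,M}^k$ coincide up to a bounded change of $M$. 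It therefore suffices to choose $\{w_j^*\}_{j=1}^{n_1}\subset\SS^{d-1}$ quasi-uniform and antipodally symmetric with fill distance $\simeq n_1^{-1/(d-1)}$ (such sets exist; they may be taken as quadrature points), and $\{b_i^*\}_{i=1}^{n_2}$ the symmetric uniform grid on $[-1,1]$ of spacing $\simeq n_2^{-1}$, with $n_1\simeq n^{(d-1)/d}$ and $n_2\simeq n^{1/d}$, and then to verify that $\{\theta_{i,j}^*\}$ satisfies the hypotheses of Theorem~\ref{thm:appr_rate_ball} with $h\simeq n^{-1/d}$.

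The core of the argument is this verification. For $\Omega=\BB^d$ one checks that $\sigma_k(\theta\cdot\tilde x)$ agrees a.e.\ on $\BB^d$ with the polynomial $(\theta\cdot\tilde x)^k$ when $\theta_{d+1}\ge 1/\sqrt2$ and vanishes a.e.\ when $\theta_{d+1}\le-1/\sqrt2$ (because there $\|\theta'\|=\sqrt{1-\theta_{d+1}^2}\le|\theta_{d+1}|$, so $\theta'\cdot x+\theta_{d+1}$ has constant sign on $\BB^d$), so in Remark~\ref{rem:exclude_cap} the inactive caps may be taken as $G_\pm=\{\pm\theta_{d+1}\ge1/\sqrt2\}$, i.e.\ with $\lambda_\Omega=1$. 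Hence it is enough that (i) $\{\theta_{i,j}^*\}$ is well-distributed on the band $B:=\{\theta\in\SS^d:|\theta_{d+1}|<1/\sqrt2\}$, and (ii) $\{\theta_{i,j}^*\}$ together with its antipodes contains $\binom{k+d}{d}$ points whose functions $(\theta\cdot\tilde x)^k$ are linearly independent, so that every polynomial of degree $\le k$ lies in $L_n^k$ via $t^k=\sigma_k(t)+(-1)^k\sigma_k(-t)$. For (i), parametrize $B$ by $(\omega,\phi)\in\SS^{d-1}\times(\tfrac{\pi}{4},\tfrac{3\pi}{4})$ through $\theta=(\sin\phi\,\omega,\cos\phi)$; there $\sin\phi$ is bounded away from $0$, so the geodesic distance on $\SS^d$ is comparable to $\rho(\omega,\omega')+|\phi-\phi'|$, and the normalization sends $(w_j^*,b_i^*)$ to $(\omega,\phi)=(w_j^*,\operatorname{arccot}b_i^*)$ with $b\mapsto\operatorname{arccot}b$ bi-Lipschitz from $[-1,1]$ onto $[\tfrac{\pi}{4},\tfrac{3\pi}{4}]$; hence the fill distance of $\{\theta_{i,j}^*\}$ in $B$ is $\simeq\max(n_1^{-1/(d-1)},n_2^{-1})\simeq n^{-1/d}$. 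For (ii), antipodal symmetry of $\{\theta_{i,j}^*\}$ is inherited from that of the two factor sets, and the required linear independence — an open dense condition on the configuration, since $\theta\mapsto(\theta^\beta)_{|\beta|=k}$ is a Veronese-type embedding of $\SS^d$ — may be imposed by an arbitrarily small perturbation of the grid that preserves symmetry and well-distribution; the finitely many small $n$ are absorbed into the constant (take $f_n=0$).

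With (i) and (ii) in hand, Remark~\ref{rem:exclude_cap} together with \eqref{eqn:rate_d+2k+1} gives, for every $f\in\mathcal{H}^{\frac{d+2k+1}{2}}(\BB^d)$ and some $M\simeq\|f\|_{\mathcal{H}^{\frac{d+2k+1}{2}}(\BB^d)}$,
\begin{equation*}
\inf_{f_n\in L_{n,M}^k(\{\theta_{i,j}^*\})}\|f-f_n\|_{\mathcal{L}^2(\BB^d)}\lesssim n^{-\frac12-\frac{2k+1}{2d}}\|f\|_{\mathcal{H}^{\frac{d+2k+1}{2}}(\BB^d)},
\end{equation*}
and undoing the homogeneity reduction — absorbing the bounded factors $(1+(b_i^*)^2)^{k/2}$ into the free coefficients $a_{i,j}$ — yields \eqref{eqn:petrushev}. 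I expect the main obstacle to be step (i): transferring the anisotropic $n_1\times n_2$ product mesh onto the curved sphere and confirming that Petrushev's $(n_1,n_2)$-split produces fill distance exactly $n^{-1/d}$ in both the latitudinal and longitudinal directions, together with the small but necessary bookkeeping needed to pass through Remark~\ref{rem:exclude_cap} with the sharp caps $\{|\theta_{d+1}|\ge1/\sqrt2\}$ rather than the larger ones coming from $\operatorname{diam}(\BB^d)$.
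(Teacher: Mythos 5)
Your proposal is correct and follows the same route as the paper's (very terse) proof: normalize the tensor-product weights onto the band $\{\theta\in\SS^d:|\theta_{d+1}|\le 1/\sqrt2\}$ via $(w,b)\mapsto (w,b)/\sqrt{1+b^2}$, observe that this bi-Lipschitz map transports the product fill distance $\simeq n^{-1/d}$ to a geodesic fill distance on the band of the same order, and invoke Theorem~\ref{thm:appr_rate_ball} together with Remark~\ref{rem:exclude_cap}. The extra details you work out --- the explicit $\phi=\operatorname{arccot}b$ parametrization, the sharpened caps $\{|\theta_{d+1}|\ge 1/\sqrt2\}$ for $\BB^d$ (rather than those obtained from $\operatorname{diam}(\BB^d)$ in the statement of Remark~\ref{rem:exclude_cap}), the homogeneity rescaling showing the two spans coincide with comparable coefficient bounds, and the antipodal-symmetry argument for generating all degree-$\le k$ polynomials --- are left implicit in the paper but are genuinely required to close the argument.
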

\begin{proof}
The points constructed in \cite{petrushev1998approximation} satisfies 
$$\max\limits_{\binom{w}{b}\in\SS^{d-1}\times[-1,1]}\min\limits_{\substack{1\leq j\leq n_1\\1\leq i\leq n_2}}\Big|\binom{w}{b}-\binom{w_j}{b_i}\Big|\lesssim n^{-\frac{1}{d}}.$$
    Then it suffices to notice the normalization $\mathfrak{P}:\SS^{d-1}\times[-1,1]\to\{\theta\in\SS^{d}:~|\theta_{d+1}|\leq\frac{1}{\sqrt{2}}\}$
    \begin{equation}
        \mathfrak{P}:~\binom{w}{b}\mapsto\frac{1}{\sqrt{1+b^2}}\binom{w}{b},\qquad\binom{w}{b}\in\SS^{d-1}\times[1,1].
    \end{equation}
    is a uniform homeomorphism. Together with Remark \ref{rem:exclude_cap} completes the proof.
\end{proof}



As we mentioned in \eqref{eqn:nonlinear_lower}, Theorem \ref{thm:appr_rate_ball} is optimal up to logarithmic factors. Interestingly, it leads to an important function space embedding result, previously established in \cite{mao2024approximation}:
\begin{equation}\label{eqn:var_embedding}
    \mathcal{H}^{\frac{d+2k+1}{2}}(\Omega)\hookrightarrow\mathcal{B}^k(\Omega),
\end{equation}
and the gap between the above two spaces is small in the sense that the metric entropies of their unit balls are of the same order, namely
    \begin{equation}\label{eqn:same_entropy}
    \epsilon_n\bigl(\BB\bigl(\mathcal{H}^{\frac{d+2k+1}{2}}(\Omega)\bigr)\bigr)_{\mathcal{L}^2(\Omega)}\simeq\epsilon_n\lrt{\BB\lrt{\mathcal{B}^k(\Omega)}}_{\mathcal{L}^2(\Omega)}\simeq n^{-\frac{d+2k+1}{2d}}.
    \end{equation}
    Here the metric entropies mean the infimum of $\lambda$ such that $2^n$ $\mathcal{L}^2$-balls of radius $\lambda$ can cover the unit balls in $\mathcal{H}^{\frac{d+2k+1}{2}}(\Omega)$ and $\mathcal{B}^k(\Omega)$ respectively \cite{kolmogorov1958linear}. The first equivalence in \eqref{eqn:same_entropy} follows from classical results on Sobolev space (see, e.g., \cite{devore1993constructive}), and the second equivalence was recently established in 
     \cite{siegel2022sharp}. 
\begin{corollary}[Theorem 1. \cite{mao2024approximation}]\label{cor:embedding}
Let $k,d,\Omega$ be as in \eqref{eqn:welldistr}, then there is the continuous embedding
$$\mathcal{H}^{\frac{d+2k+1}{2}}(\Omega)\hookrightarrow\mathcal{B}^k(\Omega).$$
\end{corollary}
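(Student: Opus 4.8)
The plan is to obtain Corollary~\ref{cor:embedding} as a soft consequence of Theorem~\ref{thm:appr_rate_ball} together with the variation-space characterization of $\mathcal{B}^k(\Omega)$ recorded in Theorem~\ref{thm:SiegelXu}; no new estimate is required. Fix $f\in\mathcal{H}^{\frac{d+2k+1}{2}}(\Omega)$. For each $n$ pick a well-distributed family $\{\theta_j^*\}_{j=1}^n\subset\SS^d$ (such sets exist for every $n$ --- e.g.\ a maximal $n^{-1/d}$-separated subset of $\SS^d$, or the centers of an equal-area partition, both satisfying \eqref{eqn:welldistr}). Applying \eqref{eqn:rate_d+2k+1} with these points produces a parameter $M\simeq\|f\|_{\mathcal{H}^{\frac{d+2k+1}{2}}(\Omega)}$ whose implied constant does not depend on $n$, hence a \emph{single} $M$ valid for all $n$, such that
\begin{equation*}
\inf_{f_n\in L_{n,M}^k}\|f-f_n\|_{\mathcal{L}^2(\Omega)}\lesssim n^{-\frac12-\frac{2k+1}{2d}}\,\|f\|_{\mathcal{H}^{\frac{d+2k+1}{2}}(\Omega)}\longrightarrow 0\qquad(n\to\infty).
\end{equation*}

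By the inclusion \eqref{eqn:lin_subs_nonlin} we have $L_{n,M}^k\subset\Sigma_{n,M}^k$, so the left-hand side bounds $\inf_{f_n\in\Sigma_{n,M}^k}\|f-f_n\|_{\mathcal{L}^2(\Omega)}$ from above, and the latter tends to $0$ as well. The backward implication of Theorem~\ref{thm:SiegelXu}(1) then gives $f\in\mathcal{B}^k(\Omega)$; moreover, since the least admissible $M$ in that characterization is comparable to $\|f\|_{\mathcal{B}^k(\Omega)}$, we conclude
\begin{equation*}
\|f\|_{\mathcal{B}^k(\Omega)}\lesssim M\simeq\|f\|_{\mathcal{H}^{\frac{d+2k+1}{2}}(\Omega)} .
\end{equation*}
As the constants in Theorem~\ref{thm:appr_rate_ball} are independent of $f$, this is precisely the continuity of $\mathcal{H}^{\frac{d+2k+1}{2}}(\Omega)\hookrightarrow\mathcal{B}^k(\Omega)$.

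An even shorter route avoids the approximation statement entirely and uses the integral representation \eqref{eqn:int_rep_sob_intr}: write $f(x)=\fint_{\SS^d}\sigma_k(\theta\cdot\tilde x)\psi(\theta)\,d\theta$ with $\psi\in\mathcal{L}^2(\SS^d)$, set $d\mu=|\SS^d|^{-1}\psi\,d\theta\in\mathcal{M}(\SS^d)$, so that $f=\int_{\SS^d}\sigma_k(\theta\cdot\tilde x)\,d\mu(\theta)$ and, by Cauchy--Schwarz on the sphere, $|\mu|(\SS^d)=|\SS^d|^{-1}\|\psi\|_{\mathcal{L}^1(\SS^d)}\le|\SS^d|^{-1/2}\|\psi\|_{\mathcal{L}^2(\SS^d)}$. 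Inserting this into \eqref{eqn:barron_norm_int} yields $\|f\|_{\mathcal{B}^k(\Omega)}\lesssim\|\psi\|_{\mathcal{L}^2(\SS^d)}$, and taking the infimum over admissible $\psi$ together with \eqref{eqn:int_rep_sob_intr} gives $\|f\|_{\mathcal{B}^k(\Omega)}\lesssim\|f\|_{\mathcal{H}^{\frac{d+2k+1}{2}}(\Omega)}$ once more.

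I do not expect a genuine obstacle here: the whole content is carried by Theorem~\ref{thm:appr_rate_ball} (or by the integral representation), and the corollary itself is a one-paragraph deduction. The only things needing a moment's care are bookkeeping: that a well-distributed point set of cardinality $n$ exists for every $n$, that the constant linking $M$ and $\|f\|_{\mathcal{H}^{\frac{d+2k+1}{2}}(\Omega)}$ in \eqref{eqn:rate_d+2k+1} is $n$-independent so that a common $M$ can be fed into the characterization, and the trivial inclusion $\mathcal{H}^{\frac{d+2k+1}{2}}(\Omega)\subset\mathcal{L}^2(\Omega)$ that makes the target norm meaningful.
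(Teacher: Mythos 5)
Your Route 1 is essentially the proof given in the paper: fix $M\simeq\|f\|_{\mathcal{H}^{\frac{d+2k+1}{2}}(\Omega)}$ uniformly in $n$, take the approximants $f_n\in L_{n,M}^k\subset\Sigma_{n,M}^k$ from Theorem~\ref{thm:appr_rate_ball}, and observe that their limit $f$ lies in $M\overline{\mathrm{conv}(\mathbb{D}_{\sigma_k})}$, hence $\|f\|_{\mathcal{B}^k(\Omega)}\le M\lesssim\|f\|_{\mathcal{H}^{\frac{d+2k+1}{2}}(\Omega)}$; the paper phrases the last step directly via the definition of $\|\cdot\|_{\mathcal{B}^k(\Omega)}$ rather than citing Theorem~\ref{thm:SiegelXu}(1), but it is the same closure argument.

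Your Route 2 is a genuinely different and slightly shorter derivation: it bypasses approximation entirely, obtaining the embedding as a corollary of the integral representation in Theorem~\ref{thm:barron_sob_equ} plus the Cauchy--Schwarz bound $|\mu|(\SS^d)\lesssim\|\psi\|_{\mathcal{L}^2(\SS^d)}$ feeding into \eqref{eqn:barron_norm_int}. This is non-circular (Theorem~\ref{thm:barron_sob_equ} is proved in Section~\ref{sec:proof_ball} without invoking the embedding), and it makes transparent that the embedding is really a consequence of $\mathcal{L}^2(\SS^d)\hookrightarrow\mathcal{M}(\SS^d)$ on the dual side. The only small caveat: the paper's spherical $\mathcal{L}^2$-norm uses the normalized measure $\fint_{\SS^d}=\omega_d^{-1}\int_{\SS^d}$, so your Cauchy--Schwarz constant $|\SS^d|^{-1/2}$ should really be $1$ in that normalization --- immaterial for the $\lesssim$ conclusion, but worth tracking if you state it quantitatively.
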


\begin{proof}
    Let $f\in \mathcal{H}^{\frac{d+2k+1}{2}}(\Omega)$ and $f_n\in L_{n,M}^k$ be the approximants of $f$ in Theorem \ref{thm:appr_rate_ball}. Then
    \begin{equation*}
        f_n\in L_{n,M}^k\subset\Sigma_{n,M}^k.
    \end{equation*}
    Notice that $\lim\limits_{n\to\infty}\|f_n-f\|_{\mathcal{L}^2(\Omega)}=0$ implies
    \begin{equation}
        f=\lim\limits_{n\to\infty}f_n\subset\overline{\bigcup\limits_{n=1}^\infty\Sigma_{n,M}^k}=M\overline{\mathrm{conv}(\mathbb{D}_{\sigma_k})}.
    \end{equation}
    It means
    \begin{equation*}
        \|f\|_{\mathcal{B}^k(\Omega)}\leq M\lesssim\|f\|_{\mathcal{H}^{\frac{d+2k+1}{2}}(\Omega)}.
    \end{equation*}
\end{proof}
The embedding theorem provides some aspect of characterizing the smoothness of Barron spaces in terms of smoothness. In particular, the metric entropy argument confirms that $\mathcal{H}^{\frac{d+2k+1}{2}}(\Omega)$ is the maximal Sobolev space for the embedding. We mentioned the integral representation for Barron spaces in \eqref{eqn:barron_integ_def}, which coincides with the definitions used in \cite{E2019population,E2019barron,E2020representation} (see also \cite{yukich1995sup,makovoz1998uniform} for similar spaces). To further clarify the difference between $\mathcal{B}^k(\Omega)$ and $\mathcal{H}^{\frac{d+2k+1}{2}}(\Omega)$, we present a new integral representation result for Sobolev spaces in comparison.

\begin{theorem}\label{thm:barron_sob_equ}
Let $d\in\NN$, $k\in\NN_0$, $\Omega\subset\RR^d$ be a bounded domain, then the Sobolev space $\mathcal{H}^{\frac{d+2k+1}{2}}(\Omega)$ given in \eqref{eqn:def_sob} can be alternatively written as
\begin{equation}\label{eqn:sob_int_def}
    \mathcal{H}^{\frac{d+2k+1}{2}}(\Omega)=\lt\{\fint_{\SS^d}\sigma_k(\theta\cdot\tilde x)\psi(\theta)d\theta:~\psi\in\mathcal{L}^2(\SS^d)\rt\}.
\end{equation}
In particular, for any $f\in \mathcal{H}^{\frac{d+2k+1}{2}}(\Omega)$, there exists a $\psi\in \mathcal{L}^2(\SS^d)$ (that may not be unique) such that
\begin{equation}\label{eqn:fpsi_int}
    f(x)=\fint_{\SS^d}\sigma_k(\theta\cdot\tilde x)\psi(\theta)d\theta.
\end{equation}
Furthermore, 
\begin{equation}
    \|f\|_{\mathcal{H}^{\frac{d+2k+1}{2}}(\Omega)}\simeq\inf\limits_{\psi\in\mathcal{L}^2(\SS^d)}\bigl\{\|\psi\|_{\mathcal{L}^2(\SS^d)}: \psi \mbox{ satisfies }\eqref{eqn:fpsi_int}\bigr\},
\end{equation}
\end{theorem}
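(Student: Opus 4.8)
\textbf{Proof proposal for Theorem \ref{thm:barron_sob_equ}.}

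The plan is to reduce the integral-representation statement to a spherical harmonic/Legendre computation and then bootstrap from the sphere to the general domain $\Omega$. First I would fix the ansatz $f(x) = \fint_{\SS^d}\sigma_k(\theta\cdot\tilde x)\psi(\theta)\,d\theta$ and examine it as a function of $\tilde x$ restricted to the lifted domain $\{\tilde x = \binom{x}{1}: x\in\Omega\}$. Since $\sigma_k$ is homogeneous of degree $k$, the map $\psi \mapsto \fint_{\SS^d}\sigma_k(\theta\cdot\cdot)\psi(\theta)\,d\theta$ is a convolution-type operator on $\SS^d$: restricting the output to $\tilde x/|\tilde x|\in\SS^d$ and using the Funk--Hecke theorem, $\sigma_k(\theta\cdot\eta)$ acting on a spherical harmonic $Y_\ell$ of degree $\ell$ on $\SS^d$ multiplies it by an explicit eigenvalue $\lambda_{k,\ell}$. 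The key arithmetic fact I would establish is the asymptotic $\lambda_{k,\ell}\simeq \ell^{-(d+2k+1)/2}\cdot(\text{something like }\ell^{-(d-1)/2})$ — more precisely that $|\lambda_{k,\ell}|\simeq \ell^{-(d+2k+1)/2}$ up to the normalization built into the surface-measure average, with $\lambda_{k,\ell}$ nonzero for all $\ell$ of the appropriate parity (and the finitely many "missing" parities/low degrees handled by the polynomial correction, exactly as in Remark \ref{rem:exclude_cap} via $t^k = \sigma_k(t)+(-1)^k\sigma_k(-t)$). Then $f\in\mathcal{H}^{(d+2k+1)/2}$ on the sphere is equivalent, via the standard characterization $\|f\|_{\mathcal{H}^s(\SS^d)}^2\simeq\sum_\ell (1+\ell)^{2s}\|\mathrm{proj}_\ell f\|_{\mathcal{L}^2}^2$, to $\psi := \sum_\ell \lambda_{k,\ell}^{-1}\,\mathrm{proj}_\ell f\in\mathcal{L}^2(\SS^d)$, and the norm equivalence $\|f\|_{\mathcal{H}^{(d+2k+1)/2}}\simeq\|\psi\|_{\mathcal{L}^2}$ drops out of this eigenvalue matching. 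This is the heart of the argument and reuses the spherical-harmonic machinery announced for Section \ref{sec:harmonic_legendre}.

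Second, I would transfer from $\SS^d$ to $\Omega\subset\RR^d$. The lifting $x\mapsto\tilde x=\binom{x}{1}$ followed by radial projection $\tilde x\mapsto\tilde x/|\tilde x|$ maps $\Omega$ diffeomorphically onto a spherical cap (or, after the shift making $0\in\Omega$ as in Remark \ref{rem:exclude_cap}, a controlled region of $\SS^d$), and because $\sigma_k$ is $k$-homogeneous, $\fint_{\SS^d}\sigma_k(\theta\cdot\tilde x)\psi(\theta)\,d\theta = |\tilde x|^k \fint_{\SS^d}\sigma_k(\theta\cdot(\tilde x/|\tilde x|))\psi(\theta)\,d\theta$, so values on $\Omega$ are determined by values of the spherical representation on that cap, multiplied by the smooth nonvanishing weight $|\tilde x|^k=(1+|x|^2)^{k/2}$. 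The restriction $\mathcal{H}^s(\SS^d)\to\mathcal{H}^s(\text{cap})$ is bounded and surjective (standard Sobolev extension/restriction on Lipschitz pieces of the sphere), and multiplication by the smooth positive weight is a bounded isomorphism on $\mathcal{H}^s$; combining these with the sphere result gives exactly \eqref{eqn:sob_int_def} together with the norm equivalence, where the infimum over $\psi$ appears because the restriction-to-cap map has a kernel (different $\psi$ can give the same $f$ on $\Omega$). The non-uniqueness of $\psi$ is thus expected and harmless — one takes, e.g., the minimal-norm extension.

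The main obstacle I anticipate is the precise control of the Funk--Hecke eigenvalues $\lambda_{k,\ell}$: one needs not just their decay rate but that they are \emph{bounded below} by $c\,\ell^{-(d+2k+1)/2}$ (no anomalous extra vanishing beyond the parity obstruction) so that $\psi=\sum_\ell\lambda_{k,\ell}^{-1}\mathrm{proj}_\ell f$ genuinely lies in $\mathcal{L}^2$ and not merely in some larger space; this requires the explicit closed form of $\int_{-1}^1 \sigma_k(t)\,C_\ell^{(d-1)/2}(t)(1-t^2)^{(d-2)/2}\,dt$ in terms of Gamma functions and a careful asymptotic analysis (Stirling), which is exactly where the parity of $\ell$ relative to $k$ enters and where the finitely many exceptional degrees must be absorbed by polynomials as in Remark \ref{rem:exclude_cap}. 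A secondary technical point is verifying that the spherical region onto which $\Omega$ lifts is a genuine Lipschitz subdomain of $\SS^d$ so the restriction/extension theory applies — this is routine given the Lipschitz-boundary hypothesis on $\Omega$ but should be stated carefully.
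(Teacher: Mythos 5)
Your proposal is correct and follows essentially the same route as the paper: you prove a spherical-harmonic integral representation on $\SS^d$ using the Funk--Hecke eigenvalues of $\sigma_k$ (which is the paper's Theorem \ref{thm:sph_int_rep_sob}, built on the asymptotic $\widehat{\sigma_k}(m)\simeq m^{-(d+2k+1)/2}$ from Lemma \ref{lem:Bach}), handle the parity obstruction via $t^k=\sigma_k(t)+(-1)^k\sigma_k(-t)$ exactly as the paper does, and then transfer to $\Omega$ through the homogeneity identity $\fint\sigma_k(\theta\cdot\tilde x)\psi\,d\theta=|\tilde x|^k\fint\sigma_k(\theta\cdot\tilde x/|\tilde x|)\psi\,d\theta$ together with Sobolev extension/restriction, which is precisely the role of the operators $S_k,T_k$ and Lemma \ref{lem:ball_sphere} in the paper. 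The two technical caveats you flag (lower bound on the eigenvalues, Lipschitz geometry of the lifted cap) are exactly the points the paper addresses, so nothing is missing.
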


This theorem provides a new characterization of Sobolev spaces and highlights the structural differences between Barron spaces and Sobolev spaces. To specify, \eqref{eqn:barron_integ_def} and Theorem \ref{thm:barron_sob_equ} establish a space isomorphism (as vector spaces, not Banach spaces)
\begin{equation*}
    \mathcal{B}^k(\Omega)/\mathcal{H}^{\frac{d+2k+1}{2}}(\Omega)\cong(\mathcal{M}(\SS^d)/\mathcal{N}_k)/(\mathcal{L}^2(\SS^d)/(\mathcal{L}^2(\SS^d)\cap\mathcal{N}_k))\cong\mathcal{M}(\SS^d)/(\mathcal{L}^2(\SS^d)+\mathcal{N}_k),
\end{equation*}
where $\mathcal{N}_k$ is the kernel of the homomorphism $\mu\mapsto\int_{\SS^d}\sigma_k(\theta\cdot\tilde x)d\mu(\theta)$:
\begin{equation}
    \mathcal{N}_k=\Bigl\{\mu\in\mathcal{M}(\SS^d):~\int_{\SS^d}\sigma_k(\theta\cdot\tilde x)d\mu(\theta)=0,\quad a.e.\Bigr\},
\end{equation}
and $\mathcal{M}(\SS^d)$ is the space of signed Borel measures. Notably, we slightly abuse notation by treating $\mathcal{L}^2(\mathbb{S}^d)$ as a subspace of $\mathcal{M}(\mathbb{S}^d)$. Specifically, any function $\psi \in \mathcal{L}^2(\mathbb{S}^d)$ can be naturally identified with the measure $\mu \in \mathcal{M}(\mathbb{S}^d)$ whose density function is given by $\psi$.

An interesting special case arises when considering even/odd Barron spaces and Sobolev spaces defined on the sphere $\mathbb{S}^d$. In this setting, $\mathcal{N}_k=\{0\}$, which implies that the quotient space $\mathcal{B}^k(\mathbb{S}^d)/\mathcal{H}^{\frac{d+2k+1}{2}}(\mathbb{S}^d)$ is almost equivalent to the quotient $\mathcal{M}(\mathbb{S}^d)/\mathcal{L}^2(\mathbb{S}^d)$, up to even/odd function spaces (see Theorem \ref{thm:sph_int_rep_sob}).

Moreover, this characterization has direct implications for function approximation. Specifically, it implies that functions that can be approximated as in \eqref{eqn:rate_d+2k+1} necessarily belong to $\mathcal{H}^{\frac{d+2k+1}{2}}(\Omega)$. We summarize all these insights above and provide an analogue of Theorem \ref{thm:SiegelXu}.

\begin{theorem}\label{cor:chara_sob}
    Let $d\in\NN$, $k\in\NN_0$, $\Omega\subset\RR^d$ be a bounded domain, then
     \begin{enumerate}
     \item $f\in\mathcal{H}^{\frac{d+2k+1}{2}}(\Omega)$ if and only if, with some $M\simeq\|f\|_{\mathcal{H}^{\frac{d+2k+1}{2}}(\Omega)}$, there exists a sequence of spaces $\{L_{n,M}^k\}_{n=1}^\infty$ induced by quasi-uniform weights $\{\theta_{j,n}^*\}_{j=1}^n$,
     \begin{equation}\label{eqn:appr_equiv_sob}
         \lim\limits_{n\to\infty}\inf\limits_{\substack{f_n\in L_{n,M}^k}}
         \bigl\|f-f_n\bigr\|_{\mathcal{L}^2(\Omega)}=0.
     \end{equation}
    \item The Sobolev space $\mathcal{H}^{\frac{d+2k+1}{2}}(\Omega)$ can be alternatively written as

     \begin{equation}
         \mathcal{H}^{\frac{d+2k+1}{2}}(\Omega)=\lt\{\int_{\SS^d}\sigma_k(\theta\cdot\tilde x)\psi(\theta)d\theta:~\psi\in\mathcal{L}^2(\SS^d)\rt\}.
    \end{equation}
    Furthermore,
    \begin{equation}
        \|f\|_{\mathcal{H}^{\frac{d+2k+1}{2}}(\Omega)}\simeq\inf\limits_{\psi\in\mathcal{L}^2(\SS^d)}\lt\{\|\psi\|_{\mathcal{L}^2(\SS^d)}:~f(x)=\int_{\SS^d}\sigma_k(\theta\cdot\tilde x)\psi(\theta)d\theta\rt\}.
    \end{equation}
     \item Let $f\in\mathcal{H}^{\frac{d+2k+1}{2}}(\Omega)$, with some $M\simeq\|f\|_{\mathcal{H}^{\frac{d+2k+1}{2}}(\Omega)}$, we have
         \begin{equation}
    \inf\limits_{\substack{f_n\in L_{n,M}^k}}
         \bigl\|f-f_n\bigr\|_{\mathcal{L}^2(\Omega)}\lesssim n^{-\frac{1}{2}-\frac{2k+1}{2d}} \|f\|_{\mathcal{H}^{\frac{d+2k+1}{2}}(\Omega)}.
         \end{equation}
         \end{enumerate}
    All the corresponding constants are independent of $n$, $\{\theta_j^*\}_{j=1}^n$, and $f$.
\end{theorem}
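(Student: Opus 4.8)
The plan is to obtain all three parts by assembling Theorem~\ref{thm:appr_rate_ball} and Theorem~\ref{thm:barron_sob_equ}, the only substantive work being the converse implication in Part~1. Part~3 is immediate: a quasi-uniform collection is in particular well-distributed with $h\lesssim n^{-1/d}$, so \eqref{eqn:rate_d+2k+1} applies verbatim. Part~2 is a restatement of Theorem~\ref{thm:barron_sob_equ}: since $\fint_{\SS^d}g\,d\theta=|\SS^d|^{-1}\int_{\SS^d}g\,d\theta$, the substitution $\psi\mapsto|\SS^d|^{-1}\psi$ identifies the two function sets and rescales $\|\psi\|_{\mathcal{L}^2(\SS^d)}$ by the fixed constant $|\SS^d|^{-1}$, which is absorbed into $\simeq$. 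The forward direction of Part~1 also follows from Part~3: given $f\in\mathcal{H}^{\frac{d+2k+1}{2}}(\Omega)$, choose for each $n$ any quasi-uniform $\{\theta_{j,n}^*\}_{j=1}^n$ and invoke \eqref{eqn:rate_d+2k+1}, which yields \eqref{eqn:appr_equiv_sob} with a rate.

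For the converse of Part~1, suppose $M$ is the prescribed fixed number (only its fixedness, not its size, is used here), $\{\theta_{j,n}^*\}_{j=1}^n$ is quasi-uniform for every $n$, and $f_n=\sum_{j=1}^n a_{j,n}\sigma_k(\theta_{j,n}^*\cdot\tilde x)\in L_{n,M}^k$ with $\bigl(n\sum_j a_{j,n}^2\bigr)^{1/2}\le M$ satisfies $\|f-f_n\|_{\mathcal{L}^2(\Omega)}\to0$. I would first use quasi-uniformity to partition $\SS^d$ into Borel cells $\{A_{j,n}\}_{j=1}^n$ with $\theta_{j,n}^*\in A_{j,n}$, $\mathrm{diam}(A_{j,n})\lesssim n^{-1/d}$ and $|A_{j,n}|\simeq n^{-1}$ uniformly in $j,n$ — e.g.\ the Voronoi cells of the points, whose inradius and circumradius are comparable by \eqref{eqn:quasiuniform}. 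Setting $\psi_n:=\sum_{j=1}^n a_{j,n}\,\tfrac{|\SS^d|}{|A_{j,n}|}\,\mathbf 1_{A_{j,n}}$, one gets $\|\psi_n\|_{\mathcal{L}^2(\SS^d)}^2=|\SS^d|^2\sum_j a_{j,n}^2/|A_{j,n}|\simeq|\SS^d|\,n\sum_j a_{j,n}^2\le|\SS^d|\,M^2$, so $\|\psi_n\|_{\mathcal{L}^2(\SS^d)}\lesssim M$, and $\fint_{\SS^d}\sigma_k(\theta\cdot\tilde x)\psi_n(\theta)\,d\theta=\sum_j a_{j,n}\fint_{A_{j,n}}\sigma_k(\theta\cdot\tilde x)\,d\theta=f_n(x)+E_n(x)$ with $E_n(x)=\sum_j a_{j,n}\bigl(\fint_{A_{j,n}}\sigma_k(\theta\cdot\tilde x)\,d\theta-\sigma_k(\theta_{j,n}^*\cdot\tilde x)\bigr)$. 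Since $\sum_j|a_{j,n}|\le\sqrt n\bigl(\sum_j a_{j,n}^2\bigr)^{1/2}\le M$ and $x$ ranges over the bounded set $\Omega$, the estimate $\sup_{x\in\Omega}\bigl|\fint_{A_{j,n}}\sigma_k(\theta\cdot\tilde x)\,d\theta-\sigma_k(\theta_{j,n}^*\cdot\tilde x)\bigr|\lesssim n^{-1/d}$ (Lipschitz continuity in $\theta$ for $k\ge1$; a measure-of-sign-change bound of the same order for $k=0$, as in Section~\ref{sec:pf_sphere}) gives $\|E_n\|_{\mathcal{L}^\infty(\Omega)}\lesssim M n^{-1/d}\to0$.

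Finally, boundedness of $\{\psi_n\}$ in $\mathcal{L}^2(\SS^d)$ yields a subsequence $\psi_{n_\ell}\rightharpoonup\psi$ with $\|\psi\|_{\mathcal{L}^2(\SS^d)}\le\liminf_\ell\|\psi_{n_\ell}\|_{\mathcal{L}^2(\SS^d)}\lesssim M$; for each fixed $x\in\Omega$ the function $\theta\mapsto\sigma_k(\theta\cdot\tilde x)$ lies in $\mathcal{L}^2(\SS^d)$, so $\fint_{\SS^d}\sigma_k(\theta\cdot\tilde x)\psi_{n_\ell}(\theta)\,d\theta\to\fint_{\SS^d}\sigma_k(\theta\cdot\tilde x)\psi(\theta)\,d\theta$ pointwise, while the same quantity equals $f_{n_\ell}(x)+E_{n_\ell}(x)$, which converges to $f$ in $\mathcal{L}^2(\Omega)$; passing to a further subsequence for a.e.\ pointwise convergence identifies $f(x)=\fint_{\SS^d}\sigma_k(\theta\cdot\tilde x)\psi(\theta)\,d\theta$ a.e., and Theorem~\ref{thm:barron_sob_equ} gives $f\in\mathcal{H}^{\frac{d+2k+1}{2}}(\Omega)$ with $\|f\|_{\mathcal{H}^{\frac{d+2k+1}{2}}(\Omega)}\lesssim\|\psi\|_{\mathcal{L}^2(\SS^d)}\lesssim M$, closing the loop. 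I expect the main obstacle to be the middle step: extracting the partition with simultaneously controlled diameters and volumes from quasi-uniformity and bounding the Riemann-sum remainder $E_n$ uniformly on $\Omega$ (routine for $k\ge1$, but needing the sign-change estimate of the sphere analysis when $k=0$), together with carefully tracking the weak-versus-strong and pointwise-versus-$\mathcal{L}^2$ modes of convergence and the attendant subsequence extractions.
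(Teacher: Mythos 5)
Your proposal follows the same route as the paper's proof: Parts~2 and~3 are read off from Theorems~\ref{thm:barron_sob_equ} and~\ref{thm:appr_rate_ball}, the forward half of Part~1 from Part~3, and the converse by turning the approximants into piecewise-constant densities $\psi_n$ on a quasi-uniform partition and passing to a weak $\mathcal{L}^2$ limit, then invoking Theorem~\ref{thm:barron_sob_equ}. The paper compresses the last verification into ``one could verify the limit of this subsequence is the desired function $\psi$''; you supply the intermediate details, in particular the Riemann-sum remainder $E_n$, which is the right bookkeeping.

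There is, however, one step that fails as stated: the per-cell $\mathcal{L}^\infty$ estimate for $k=0$. You assert $\sup_{x\in\Omega}\bigl|\fint_{A_{j,n}}\sigma_0(\theta\cdot\tilde x)\,d\theta-\sigma_0(\theta_{j,n}^*\cdot\tilde x)\bigr|\lesssim n^{-1/d}$ and cite Section~\ref{sec:pf_sphere}, but no such pointwise bound is established there, and the bound is false: since $\sigma_0$ is the Heaviside function, whenever the great sphere $\{\theta\cdot\tilde x=0\}$ cuts $A_{j,n}$ near $\theta_{j,n}^*$ this difference is $\Theta(1)$. Because for any fixed $j$ some $x\in\Omega$ realizes that configuration, the supremum over $x$ is comparable to $1$, and multiplying by $\sum_j|a_{j,n}|\le M$ gives only $\|E_n\|_{\mathcal{L}^\infty}\lesssim M$, which does not vanish. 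The correct argument for $k=0$ fixes $x$ first: the great $(d-1)$-sphere $\{\theta\cdot\tilde x=0\}$ meets only $\mathcal{O}(n^{(d-1)/d})$ of the $n$ cells (a tube of width $\lesssim n^{-1/d}$ has measure $\lesssim n^{-1/d}$, each cell has measure $\simeq n^{-1}$), and the per-cell error vanishes off this index set $J_x$. Hence
\begin{equation*}
|E_n(x)|\le\sum_{j\in J_x}|a_{j,n}|\le |J_x|^{1/2}\|a\|_2\lesssim n^{\frac{d-1}{2d}}\cdot M n^{-\frac12}=M n^{-\frac{1}{2d}}\to 0,
\end{equation*}
which is slower than $n^{-1/d}$ but still suffices, since only $E_n\to 0$ (in $\mathcal{L}^2(\Omega)$ or a.e.\ along a subsequence) is needed to identify the weak limit with $f$. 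With that patch the argument is complete and coincides with the paper's.
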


\begin{proof}
    Given Theorem \ref{thm:appr_rate_ball} and \ref{thm:barron_sob_equ}, we only need to prove \eqref{eqn:appr_equiv_sob} implies $f\in\mathcal{H}^{\frac{d+2k+1}{2}}(\Omega)$. Let each $f_n$ in \eqref{eqn:appr_equiv_sob} has the form 
    $$f_n(x)=\sul_{j=1}^na(n)_j\sigma_k(\theta_{j,n}^*\cdot\tilde x).$$
    We can take a disjoint quasi-uniform partition $\SS^d=\bigcup_{j=1}^nA_{j,n}$ with
    \begin{equation}
        \theta_{j,n}^*\in A_{j,n},\quad\fint_{A_{j,n}}1d\eta\simeq n^{-1},\quad\mathrm{diam}(A_{j,n})\lesssim n^{-\frac{1}{d}},\qquad j=1,\dots,n.
    \end{equation}
    Then the piecewise constant function $\psi_n$ given by $\displaystyle\psi_n=\sul_{j=1}^n\frac{a(n)_j}{\fint_{A_{j,n}}d\eta}\mathbf{1}_{A_{j,n}}$
    has a convergence subsequence. One could verify the limit of this subsequence is the desired function $\psi$.
\end{proof}

We now show that Sobolev Spaces can be characterized as reproducing kernel Hilbert spaces (RKHS) associated with the ReLU$^k$ activation function. A common way to define a Reproducing Kernel Hilbert Space (RKHS) is through a feature map. Let $\Omega$ be a set, $\mathcal{H}_0$ be a Hilbert space, and $\Phi_0: \Omega \to \mathcal{H}_0$ be a feature map. The space of functions
\begin{equation} \label{eq:rkhs_from_feature_map_def}
    \mathcal{H} := \left\{ f: \Omega \to \mathbb{R} \mid \exists w \in \mathcal{H}_0 \text{ such that } f(x) = \langle w, \Phi_0(x) \rangle_{\mathcal{H}_0} \text{ for all } x \in \Omega \right\}
\end{equation}
equipped with the norm
\begin{equation} \label{eq:rkhs_norm_inf_def_revised}
    \|f\|_{\mathcal{H}} := \inf \left\{ \|w\|_{\mathcal{H}_0} \mid w \in \mathcal{H}_0 \text{ and } f(\cdot) = \langle w, \Phi_0(\cdot) \rangle_{\mathcal{H}_0} \right\}
\end{equation}
is an RKHS. The reproducing kernel for this space is given by $K(x, y) = \langle \Phi_0(x), \Phi_0(y) \rangle_{\mathcal{H}_0}$ (see, e.g., \cite{awad2008support}).  We can apply this construction to the ReLU$^k$ activation function. This establishes a direct connection between the Sobolev space and the RKHS associated with the ReLU$^k$ activation function.
\begin{theorem}\label{thm:sobolev_is_rkhs}
    Let $K_{\text{ReLU}^k}(x,y) = \int_{\SS^d} \sigma_k(\theta \cdot \tilde{x}) \sigma_k(\theta \cdot \tilde{y}) d\theta$ be the kernel defined on $\Omega$, with feature map $\Phi_0: \Omega \to \mathcal{L}^2(\SS^d)$ given by $(\Phi_0(x))(\theta) = \sigma_k(\theta \cdot \tilde{x})$. The Sobolev space $\mathcal{H}^{\frac{d+2k+1}{2}}(\Omega)$ is the  reproducing kernel hilbert space (RKHS) associated with the kernel $K_{\text{ReLU}^k}(x,y)$. Its Sobolev norm $\|\cdot\|_{\mathcal{H}^{\frac{d+2k+1}{2}}(\Omega)}$ is equivalent to the canonical RKHS norm, $\|f\|_{\mathcal{H}_{K_{\text{ReLU}^k}}}$, associated with $K_{\text{ReLU}^k}$, where
    \begin{equation*}
        \|f\|_{\mathcal{H}_{K_{\text{ReLU}^k}}} = \inf\limits_{\psi\in\mathcal{L}^2(\SS^d)}\left\{\|\psi\|_{\mathcal{L}^2(\SS^d)}:~f(x)=\int_{\SS^d}\sigma_k(\theta\cdot\tilde x)\psi(\theta)d\theta\right\}.
    \end{equation*}
\end{theorem}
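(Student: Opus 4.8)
The plan is to recognize that Theorem~\ref{thm:sobolev_is_rkhs} is essentially a restatement of Theorem~\ref{thm:barron_sob_equ} in the language of reproducing kernel Hilbert spaces, so the proof is a matter of matching the general RKHS-from-feature-map construction to the concrete feature map $\Phi_0(x) = \sigma_k(\theta\cdot\tilde x) \in \mathcal{L}^2(\SS^d)$. First I would verify that $\Phi_0$ is a well-defined feature map: for each fixed $x\in\Omega$ the function $\theta\mapsto\sigma_k(\theta\cdot\tilde x)$ is continuous and bounded on $\SS^d$ (since $|\theta\cdot\tilde x|\le|\tilde x|$ is bounded on the bounded domain $\Omega$), hence lies in $\mathcal{L}^2(\SS^d)$; and the map $x\mapsto\Phi_0(x)$ is continuous into $\mathcal{L}^2(\SS^d)$. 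Then the abstract construction \eqref{eq:rkhs_from_feature_map_def}--\eqref{eq:rkhs_norm_inf_def_revised} with $\mathcal{H}_0=\mathcal{L}^2(\SS^d)$ and $w=\psi$ produces exactly the function space
\begin{equation*}
    \mathcal{H}_{K_{\text{ReLU}^k}}=\Bigl\{f:~f(x)=\langle\psi,\Phi_0(x)\rangle_{\mathcal{L}^2(\SS^d)}=\int_{\SS^d}\sigma_k(\theta\cdot\tilde x)\psi(\theta)d\theta,\ \psi\in\mathcal{L}^2(\SS^d)\Bigr\},
\end{equation*}
equipped with the norm $\|f\|_{\mathcal{H}_{K_{\text{ReLU}^k}}}=\inf\{\|\psi\|_{\mathcal{L}^2(\SS^d)}:f(x)=\int_{\SS^d}\sigma_k(\theta\cdot\tilde x)\psi(\theta)d\theta\}$, and its reproducing kernel is $K(x,y)=\langle\Phi_0(x),\Phi_0(y)\rangle_{\mathcal{L}^2(\SS^d)}=\int_{\SS^d}\sigma_k(\theta\cdot\tilde x)\sigma_k(\theta\cdot\tilde y)d\theta=K_{\text{ReLU}^k}(x,y)$. (The normalizing constant $\fint$ versus $\int$ only rescales $\psi$ by the fixed volume $|\SS^d|$, so it does not affect the space and changes the norm by a universal constant.)

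Second, I would invoke Theorem~\ref{thm:barron_sob_equ}, which identifies exactly this integral-representation space with $\mathcal{H}^{\frac{d+2k+1}{2}}(\Omega)$ as sets, and establishes the norm equivalence
\begin{equation*}
    \|f\|_{\mathcal{H}^{\frac{d+2k+1}{2}}(\Omega)}\simeq\inf\limits_{\psi\in\mathcal{L}^2(\SS^d)}\bigl\{\|\psi\|_{\mathcal{L}^2(\SS^d)}:~f(x)=\fint_{\SS^d}\sigma_k(\theta\cdot\tilde x)\psi(\theta)d\theta\bigr\}=\|f\|_{\mathcal{H}_{K_{\text{ReLU}^k}}}.
\end{equation*}
Combining the two displays identifies $\mathcal{H}_{K_{\text{ReLU}^k}}=\mathcal{H}^{\frac{d+2k+1}{2}}(\Omega)$ with equivalent norms, which is the claim. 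One small point to address along the way is that the abstract norm $\|\cdot\|_{\mathcal{H}}$ in \eqref{eq:rkhs_norm_inf_def_revised} defines a genuine Hilbert space norm (the infimum is attained at the unique minimal-norm $\psi$, namely the orthogonal projection of any representing $\psi$ onto the orthogonal complement of $\{\psi:\int\sigma_k(\theta\cdot\tilde x)\psi(\theta)d\theta=0\ \forall x\}$); this is standard RKHS theory and can simply be cited as in \cite{awad2008support}, so the theorem reduces to the dictionary-translation plus Theorem~\ref{thm:barron_sob_equ}.

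I do not anticipate a genuine obstacle here, since all the analytic content is already carried by Theorem~\ref{thm:barron_sob_equ}; the only thing requiring a little care is bookkeeping of the two normalization conventions for the integral over $\SS^d$ and confirming that the representer map $\psi\mapsto f$ has closed range issues handled correctly (i.e.\ that $\mathcal{H}_{K_{\text{ReLU}^k}}$ is complete under its norm, which follows from the general feature-map construction together with the fact that $\mathcal{L}^2(\SS^d)$ is complete). If anything is delicate, it is making sure the kernel $K_{\text{ReLU}^k}$ is well-defined and finite pointwise on $\Omega\times\Omega$ — but this is immediate from $\|\Phi_0(x)\|_{\mathcal{L}^2(\SS^d)}<\infty$ together with Cauchy--Schwarz. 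Hence the proof will be short: set up the feature map, read off the RKHS and its kernel from the general construction, and apply Theorem~\ref{thm:barron_sob_equ} to identify the space and norm with the Sobolev space $\mathcal{H}^{\frac{d+2k+1}{2}}(\Omega)$.
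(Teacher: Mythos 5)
Your proposal is correct and follows essentially the same route as the paper: the paper's proof is a one-line appeal to the general RKHS-from-feature-map construction plus Theorem~\ref{thm:barron_sob_equ}. You simply spell out the bookkeeping (well-definedness of $\Phi_0$, the $\fint$ vs.\ $\int$ normalization, completeness) that the paper leaves implicit.
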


\begin{proof}
    The proof directly follows from the definition of the RKHS and the integral representation of Sobolev spaces in Theorem \ref{thm:barron_sob_equ}. 
\end{proof}

To understand the difference between nonlinear and linear \reluk networks, we can compare Theorem \ref{cor:chara_sob} with Theorem \ref{thm:SiegelXu}. 
We remark that the differences between the conditions here and those in Theorem \ref{thm:SiegelXu} are (1) the parameters $\{\theta_j\}_{j=1}^n$ here are well-distributed whereas they are arbitrary in Theorem \ref{thm:SiegelXu}; and (2) the condition $\|a\|_2\leq \frac{M}{\sqrt{n}}$ here is stronger than $\|a\|_1\leq M$ in Theorem \ref{thm:SiegelXu}, which follows from Schwartz's inequality.

\section{Spherical harmonics and Legendre polynomials}\label{sec:harmonic_legendre}

This subsection briefly reviews harmonic analysis on the unit sphere $\SS^d := \{\eta\in \RR^{d+1}:~|\eta| = 1\}$, following \cite{dai2013approximation,stein1971introduction}. The average integral uses the normalized surface measure $d\eta$, $$\fint_{\SS^d} f(\eta) \, d\eta=\frac{1}{\omega_d}\int_{\SS^d}f(\eta)d\eta,\qquad f\in\mathcal{L}^1(\SS^d),$$
where $\omega_d=\int_{\SS^d}1d\eta$. The geodesic distance is $\rho(\eta,\theta)=\arccos(\eta\cdot \theta)$.

Let $\mathbb{P}_m(\SS^d)$ be the space of polynomials of degree at most $m$ restricted to $\SS^d$, with inner product $$\langle p,q\rangle_{\mathcal{L}^2(\SS^d)} := \fint_{\SS^d} p(\eta)q(\eta)d\eta.$$
The dimension of $\mathbb{P}_m(\SS^d)$ is $\displaystyle\binom{d+1+m}{m}$ for $m=0,1$ and $\displaystyle\binom{d+1+m}{m} - \binom{d-1+m}{m-2}$ for $m \geq 2$.

The space of spherical harmonics $\YY_m$ is the orthogonal complement of $\mathbb{P}_{m-1}(\SS^d)$ in $\mathbb{P}_m(\SS^d)$, which is known as the space of spherical harmonics of degree $m$. Let $\{Y_{m,\ell}\}_{\ell=1}^{N(m)}$ be an orthonormal basis for $\YY_m$, then its dimension is $N(0)=\text{dim}(\YY_0)=1$ and
$$N(m) = \text{dim}(\YY_m)=\frac{2m+d-1}{m}\binom{m+d-2}{d-1},\qquad m\geq 0.$$
By Weierstrass’ theorem, any $f\in \mathcal{L}^2(\SS^d)$ has the harmonic expansion $$f(\eta)=\sul_{m=0}^\infty\sul_{\ell=1}^{N(m)}\widehat{f}(m,\ell)Y_{m,\ell}(\eta),\qquad a.e.~\eta\in\SS^d,$$
where $\widehat{f}(m,\ell)=\lt<f,Y_{m,\ell}\rt>_{\mathcal{L}^2(\SS^d)}$. The $\mathcal{L}^2$-projection onto $\YY_m$ is $\Pi_m f = \sul_{\ell=1}^{N(m)}\widehat{f}(m,\ell)Y_{m,\ell}$.

Given the definition of projections, we are ready to define the Sobolev spaces.

\begin{definition}[Sobolev spaces on the sphere]\label{def:sobolev_sphere}
    For $\rr>0$, the Sobolev space $\mH$ is defined as $\mH=\{f\in \mathcal{L}^2(\SS^d):~\|f\|_{\mH}<\infty\}$, with norm squared
    \begin{equation}\label{eqn:Sob_norm_Parseval}
        \|f\|_{\mH}^2=\|f\|_{\mathcal{L}^2(\SS^d)}^2+\sul_{m=1}^\infty m^{2\rr}\|\Pi_m f\|_{\mathcal{L}^2(\SS^d)}^2 = \sul_{m=0}^\infty\sul_{\ell=1}^{N(m)}(m^{2\rr}+1)|\widehat f(m,\ell)|^2.
    \end{equation}
\end{definition}


\subsection{Legendre polynomial and Legendre expansion of $\sigma_k$}
Define the space $\mathcal{L}^2_{w_d}([-1,1])$ by
\begin{equation}
    \lt<f,g\rt>_{w_d}=\int_{-1}^1f(t)g(t)(1-t^2)^{\frac{d-2}{2}}dt,\qquad\|f\|_{\mathcal{L}^2_{w_d}([-1,1])}=\lt<f,f\rt>_{w_d}^{\frac{1}{2}}.
\end{equation}
The space $\mathcal{L}^2_{w_d}([-1,1])$ has an orthogonal polynomial basis $\{p_m\}_{m=0}^\infty$ with $\mathrm{deg}(p_m)=m$ satisfying
\begin{equation}\label{eqn:def_Legendre}
    \lt<p_m,p_l\rt>_{w_d}=0,\quad l\neq m,\quad m\in\NN.
\end{equation}
Such polynomials are called Legendre polynomials, which are known to necessarily have the form (see, e.g., \cite{szego1975orthogonal})
\begin{equation}\label{eqn:lambdam_pm}
    p_m(t)=\lambda_m(1-t^2)^{-\frac{d-2}{2}}\lrt{\frac{d}{dt}}^m\lt[(1-t^2)^{m+\frac{d-2}{2}}\rt],\qquad t\in[-1,1].
\end{equation}
In this paper, we choose the normalization factors $\{\lambda_m\}_{m=0}^\infty$ properly such that \eqref{eqn:sum_Y_nl} below holds.

The integration formula (see, e.g., \cite[Lemma A.5.2.]{dai2013approximation}) shows a univariate function $f\in \mathcal{L}^2_{w_d}([-1,1])$ has the property 
\begin{equation}\label{eqn:int_sph_to_interval}
    \omega_{d-1}\int_{-1}^1f(t)(1-t^2)^{\frac{d-2}{2}}dt = \omega_d\fint_{\SS^d}f(\theta\cdot \eta)d\eta, \qquad \theta\in\SS^d.
\end{equation}

By \cite[Theorem 1.2.6]{dai2013approximation}, there exist Legendre polynomials $\{p_m\}_{m=0}^\infty$ with $\mathrm{deg}(p_m)=m$ such that
\begin{equation}\label{eqn:sum_Y_nl}
p_m(\eta\cdot \theta) = \sum_{\ell=1}^{N(m)} Y_{m,\ell}(\eta)Y_{m,\ell}(\theta).
\end{equation}

With \eqref{eqn:int_sph_to_interval} and \eqref{eqn:sum_Y_nl}, the polynomials $\{p_m\}_{m=0}^\infty$ form an orthogonal basis with respect to the weights $(1-t^2)^{\frac{d-2}{2}}$. Together with \eqref{eqn:def_Legendre}, we call $\{p_m\}_{m=0}^\infty$ in \eqref{eqn:sum_Y_nl} to be the Legendre polynomials throughout this paper.

We remark that \eqref{eqn:sum_Y_nl} are not standard Legendre polynomials as they are not normalized to have norm equal to $1$. The norms of $\{p_m\}_{m=0}^\infty$ can be determined by using the integration formula \eqref{eqn:int_sph_to_interval}:
\begin{equation}\label{eqn:Pn_normalization}
    \begin{split}
         \lt\|p_m\rt\|_{\mathcal{L}^2_{w_d}([-1,1])}=&\int_{-1}^1p_m(t)^2(1-t^2)^{\frac{d-2}{2}}dt= \frac{\omega_{d}}{\omega_{d-1}}\fint_{\SS^d} p_m(e_1\cdot \eta)^2d\eta\\
         =&\frac{\omega_{d}}{\omega_{d-1}}\fint_{\SS^d} \lrt{\sum_{\ell=1}^{N(m)} Y_{m,\ell}(e_1)Y_{m,\ell}(\eta)}^2d\eta
         =\frac{\omega_{d}}{\omega_{d-1}}\sum_{\ell=1}^{N(m)} Y_{m,\ell}(e_1)^2=\frac{\omega_{d}}{\omega_{d-1}}p_m(1)\\
         =&\frac{\omega_{d}}{\omega_{d-1}}\fint_{\SS^d}p_m(1)d\eta=\frac{\omega_{d}}{\omega_{d-1}}\fint_{\SS^d}\sul_{\ell=1}^{N(m)}Y_{m,\ell}(\eta)^2d\eta=\frac{\omega_{d}}{\omega_{d-1}}N(m).
    \end{split}
\end{equation}
Although we will not need them, the normalization factors $\{\lambda_m\}_{m=0}^\infty$ in \eqref{eqn:lambdam_pm} can be computed by comparing \eqref{eqn:Pn_normalization} with the norm of the standard Legendre polynomials (see, e.g., \cite[Chapter 4.3]{szego1975orthogonal}),
\begin{equation*}
    \lambda_m=\frac{\omega_{d}}{\omega_{d-1}}\frac{N(m)}{\Gamma(m+d/2)}\sqrt{\frac{(2m+d-1)\Gamma(m+d-1)}{2^{2m+d-1}\Gamma(m+1)}}.
\end{equation*}

The function $\sigma_k\in \mathcal{L}^2_{w_d}([-1,1])$ has the Legendre expansion in terms of the orthogonal basis $\{p_m\}_{m=0}^\infty$ as
\begin{equation}\label{eqn:Legendre_expansion_ReLUk}
\sigma_k=\sul_{m=0}^\infty\widehat{\sigma_k}(m)p_m,
\end{equation}
where the Legendre coefficients are given as
$$\widehat{\sigma_k}(m)=\frac{\lt<p_m,\sigma_k\rt>_{w_d}}{\|p_m\|_{\mathcal{L}^2_{w_d}([-1,1])}^2}.$$
The coefficients $\{\widehat{\sigma_k}(n)\}_{n=0}^\infty$ are studied in \cite{schneider1967problem,bourgain2006projection,mhaskar2006weighted,bach2017breaking}.
Denote the set
\begin{equation}
    E_{\sigma_k}:=\lt\{m\in\NN:~\widehat{\sigma_k}(m)\neq0\rt\},
\end{equation}
then by \cite[Appendix D.2]{bach2017breaking},
\begin{equation}\label{eqn:hat_sigma_large}
    \begin{split}
        &E_{\sigma_k}=\lt\{m\geq k+1:~m-k\hbox{ is odd}\rt\}\cup\{0,\dots,k\},\\
        &\widehat{\sigma_k}(m)=\frac{\omega_{d-1}k!\Gamma(d/2)}{\omega_d}\frac{(-1)^{(m-k-1)/2}\Gamma(m-k)}{2^m\Gamma\lrt{\frac{m-k+1}{2}}\Gamma\lrt{\frac{m+d+k+1}{2}}},\qquad m\in E_{\sigma_k}.
    \end{split}
\end{equation}

To proceed, we introduce the standard notation of forward and backward difference. Given any $K\in\NN$ and a sequence $\{\mathfrak{a}(m)\}_{m=K}^\infty$, we denote the forward difference $\{(\Delta\mathfrak{a})(m)\}_{m=K}^\infty$ by
        \begin{equation}
            (\Delta\mathfrak{a})(m)=\mathfrak{a}(m+1)-\mathfrak{a}(m),\qquad m\geq K
        \end{equation}
        and the $\beta$-th forward difference $\{(\Delta^\beta\mathfrak{a})(m)\}_{m=K}^\infty$ by
        \begin{equation}
            (\Delta^\beta\mathfrak{a})(m)=\underbrace{\Delta\circ\dots\circ\Delta}_{\beta}\circ \mathfrak{a}(m)=\sul_{j=0}^\beta\binom{\beta}{j}(-1)^{\beta-j}\mathfrak{a}(m+j),\qquad m\in\NN.
        \end{equation}
        Similarly, writing $\mathfrak{a}(K-1):=0$, we denote the backward difference $\{(\nabla\mathfrak{a})(m)\}_{m=K}^\infty$ by
        \begin{equation}
            (\nabla\mathfrak{a})(m)=\mathfrak{a}(m)-\mathfrak{a}(m-1),\qquad m\geq K
        \end{equation}
        and $\{(\nabla^\beta\mathfrak{a})(m)\}_{m=K}^\infty$ by
        \begin{equation}
            (\nabla^\beta\mathfrak{a})(m)=\underbrace{\nabla\circ\dots\circ\nabla}_{\beta}\circ \mathfrak{a}(m),\qquad m\geq K.
        \end{equation}
        It is easy to see the inverses of $\nabla$ and $\nabla^\beta$ are
        \begin{equation}
            \begin{split}
                &\lrt{\nabla^{-1}\mathfrak{a}}(m)=\sul_{\nu=0}^m\mathfrak{a}(\nu),\\
                &\lrt{\nabla^{-\beta}\mathfrak{a}}(m)=\underbrace{\nabla^{-1}\circ\dots\circ\nabla^{-1}}_{\beta}\circ \mathfrak{a}(m)=\sul_{\nu=0}^m\binom{m+\beta-\nu}{\beta}\mathfrak{a}(\nu)
            \end{split}
        \end{equation}

We have the following lemma for the coefficients of $\sigma_k$. 
\begin{lemma}\label{lem:Bach}
Let $\rr\leq\frac{d+2k+1}{2}$, there exists a function $\xi:[k+1,\infty)\to[0,\infty)$ such that
    \begin{equation}\label{eqn:def_theta}
        \begin{split}
            &\xi(m)=\widehat\sigma_k(m)^2m^{2\rr},\qquad m\geq k+1,~m\in E_{\sigma_k},\\
            &0\leq(-1)^\beta(\Delta^\beta\xi)(m)\lesssim m^{2\rr-(d+2k+1)-\beta},\qquad \beta=0,1,\dots
        \end{split}
    \end{equation}
    In particular, if $\rr<\frac{d+2k+1}{2}$,
    \begin{equation}\label{eqn:theta_beta}
        (-1)^\beta(\Delta^\beta\xi)(m)\simeq m^{2\rr-(d+2k+1)-\beta},\qquad \beta=0,1,\dots.
    \end{equation}
\end{lemma}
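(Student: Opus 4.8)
The plan is to produce an explicit closed form for a function $\xi$ extending $m\mapsto\widehat{\sigma_k}(m)^2m^{2r}$ off $E_{\sigma_k}$, and to read off the alternating finite-difference estimates from its asymptotic expansion, with complete monotonicity used to pin down the signs.

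\emph{Step 1: closed form.} I would start from the explicit expression in \eqref{eqn:hat_sigma_large}. For $m\in E_{\sigma_k}$ with $m\ge k+1$ the number $m-k$ is odd, so applying the duplication formula $\Gamma(2u)=\tfrac{2^{2u-1}}{\sqrt\pi}\Gamma(u)\Gamma(u+\tfrac12)$ with $2u=m-k$ gives $\tfrac{\Gamma(m-k)}{\Gamma(\frac{m-k+1}{2})}=\tfrac{2^{m-k-1}}{\sqrt\pi}\Gamma(\tfrac{m-k}{2})$; squaring \eqref{eqn:hat_sigma_large} and cancelling the powers of $4$ yields
\begin{equation*}
\widehat{\sigma_k}(m)^2=c_{k,d}\left(\frac{\Gamma\!\big(\tfrac{m-k}{2}\big)}{\Gamma\!\big(\tfrac{m-k}{2}+c_*\big)}\right)^{2},\qquad c_*:=\tfrac{d+2k+1}{2},
\end{equation*}
with $c_{k,d}>0$ depending only on $k,d$. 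The right-hand side times $m^{2r}$ is a smooth positive function of the real variable $m$ on $(k,\infty)$, which I take as the starting candidate for $\xi$; note that $\tfrac{\Gamma(u)}{\Gamma(u+c_*)}$ is completely monotone in $u$ by the Beta-integral identity $\tfrac{\Gamma(u)}{\Gamma(u+c_*)}=\tfrac1{\Gamma(c_*)}\int_0^\infty e^{-us}(1-e^{-s})^{c_*-1}\,ds$, which I invoke again in Step 3.

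\emph{Step 2: size and leading sign of the differences.} Using the standard ratio asymptotics $\tfrac{\Gamma(u+a)}{\Gamma(u+b)}=u^{a-b}\big(1+O(u^{-1})\big)$, which may be differentiated term by term, I would expand $\xi(m)=m^{2r-(d+2k+1)}\big(c_0+c_1m^{-1}+c_2m^{-2}+\cdots\big)$ with $c_0>0$; this gives the $\beta=0$ case $\xi(m)\simeq m^{2r-(d+2k+1)}$. Combining with the elementary estimate $\Delta^\beta(m^{-s})=(-1)^\beta\tfrac{\Gamma(s+\beta)}{\Gamma(s)}m^{-s-\beta}\big(1+O(1/m)\big)$ for $s>0$ (forward differences of a power function), and using the discrete Leibniz rule $\Delta^\beta(fg)=\sum_i\binom{\beta}{i}(\Delta^if)(\Delta^{\beta-i}g)(\cdot+i)$ to bookkeep the cross terms, one gets, for $r<c_*$ and $m$ large, $(\Delta^\beta\xi)(m)=(-1)^\beta c_0\tfrac{\Gamma(s+\beta)}{\Gamma(s)}m^{-s-\beta}\big(1+O(1/m)\big)$ with $s=(d+2k+1)-2r>0$, which is \eqref{eqn:theta_beta}; for general $r\le c_*$ the same computation produces the upper bound $(-1)^\beta(\Delta^\beta\xi)(m)\lesssim m^{2r-(d+2k+1)-\beta}$ in the large-$m$ regime.

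\emph{Step 3: nonnegativity, and the borderline exponent.} The hard part is the sign condition $(-1)^\beta(\Delta^\beta\xi)(m)\ge0$ for \emph{all} $\beta$ and \emph{all} $m\ge k+1$, because the smooth closed form of Step 1 is not completely monotone: the growing factor $m^{2r}$ destroys it. Here I would exploit the freedom left by the statement — $\xi$ is prescribed only on $E_{\sigma_k}\cap[k+1,\infty)=\{k+1,k+3,k+5,\dots\}$ and is free on the intermediate integers $k+2,k+4,\dots$ — by constructing $\xi$ as the restriction to $[k+1,\infty)$ of a genuinely completely monotone function $\xi(m)=\int_0^\infty e^{-ms}\,d\mu(s)$ with $\mu\ge0$, so that $(-1)^\beta(\Delta^\beta\xi)(m)=\int_0^\infty(1-e^{-s})^\beta e^{-ms}\,d\mu(s)\ge0$ automatically; matching $\mu$ to the prescribed values then amounts to realizing the sequence $\big(\widehat{\sigma_k}(k+1+2j)^2(k+1+2j)^{2r}\big)_{j\ge0}$ — which by Step 2 decays like $j^{2r-(d+2k+1)}$ — as a Hausdorff moment sequence, the complete monotonicity of the Gamma-ratio from Step 1 being the engine, while the decay rate $m^{2r-(d+2k+1)-\beta}$ of each difference is supplied by the Stirling expansion. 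I expect Step 3 to be the main obstacle, and it is most delicate in the borderline case $r=\tfrac{d+2k+1}{2}$: there $\xi$ is bounded and tends to a positive constant, so the leading Stirling term is constant and the sign and size of $\Delta^\beta\xi$ for $\beta\ge1$ are governed entirely by the subleading coefficients $c_1,c_2,\dots$, which requires either a careful analysis of $\mu$ near the origin or a limiting argument from $r<c_*$; this is where the real work concentrates.
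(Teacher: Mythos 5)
Your Step 1 (duplication formula, closed form $\xi(t)\propto t^{2r}\bigl(\tfrac{\Gamma((t-k)/2)}{\Gamma((t+d+k+1)/2)}\bigr)^2$) and Step 2 (Stirling asymptotics for the magnitude of $\Delta^\beta\xi$ as $m\to\infty$) match the paper's starting point, but Step 3 contains a factual error that derails the argument. You assert that "the smooth closed form of Step 1 is not completely monotone: the growing factor $m^{2r}$ destroys it," and use this to justify switching to a Hausdorff-moment construction of a fresh $\xi$. That assertion is false under the standing hypothesis $r\le\tfrac{d+2k+1}{2}$: the Gamma-ratio decays like $t^{-(d+2k+1)}$ and dominates the $t^{2r}$ growth, and the paper's actual proof establishes $(-1)^\beta\xi^{(\beta)}(t)\ge 0$ on $[k+1,\infty)$ for exactly this closed-form $\xi$. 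The paper does so by an induction on $\beta$ built on the logarithmic derivative $\xi'/\xi=\tfrac{2r}{t}+\psi(\tfrac{t-k}{2})-\psi(\tfrac{t+d+k+1}{2})$, expanding $(d/dt)^j(\xi'/\xi)$ in polygamma functions, checking its sign term by term via the series $\psi^{(j)}(u)=(-1)^{j+1}j!\sum_\nu(u+\nu)^{-(j+1)}$, and finally transferring the continuous estimate to forward differences via the exact representation $(-1)^\beta(\Delta^\beta\xi)(m)=\int_m^{m+1}\int_0^1\cdots\int_0^1(-1)^\beta\xi^{(\beta)}(t_1+\cdots+t_\beta)\,dt_\beta\cdots dt_1$. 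You should attempt exactly this and the role of $r\le\tfrac{d+2k+1}{2}$ is to make each polygamma-minus-power term nonnegative.

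Beyond the wrong premise, the alternative you propose in Step 3 — realizing the prescribed lacunary sequence as a Hausdorff moment sequence of some new $\mu\ge0$ — is sketched but not executed, and you yourself flag it as the unsolved bottleneck; as written it does not constitute a proof. A secondary point: your Step 2 only controls $\Delta^\beta\xi$ for $m$ large, whereas the lemma requires the sign and two-sided bounds for all $m\ge k+1$. This is exactly what the paper's integral identity linking $\Delta^\beta\xi$ to $\xi^{(\beta)}$ supplies once the sign of $\xi^{(\beta)}$ is known globally; without that link your Step 2 asymptotics cannot be upgraded to the uniform statement \eqref{eqn:def_theta}--\eqref{eqn:theta_beta}.
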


\begin{proof}

Applying the Legendre duplication formula, 
\begin{equation*} 
        \frac{\Gamma(m-k)}{2^m\Gamma\lrt{\frac{m-k}{2}}\Gamma\lrt{\frac{m-k+1}{2}}}\\
        =\frac{1}{2^{k+1}\sqrt{\pi}},\qquad m-k\notin-\frac{\NN}{2},
\end{equation*}
we have
\begin{equation*}
    \begin{split}
        \frac{\Gamma(m-k)}{2^m\Gamma\lrt{\frac{m-k+1}{2}}\Gamma\lrt{\frac{m+d+k+1}{2}}}=\frac{1}{2^{k+1}\sqrt{\pi}}\frac{\Gamma\lrt{\frac{m-k}{2}}}{\Gamma\lrt{\frac{m+d+k+1}{2}}},\qquad m\geq k+1.
    \end{split}
\end{equation*}
Denote the function
\begin{equation}\label{eqn:theta}
    \xi(t)=\lrt{\frac{\omega_{d-1}}{\omega_d}\frac{k!\Gamma(d/2)}{2^{k+1}\sqrt{\pi}}}^2 t^{2\rr} \lrt{\frac{\Gamma\lrt{\frac{t-k}{2}}}{\Gamma\lrt{\frac{t+d+k+1}{2}}}}^2,
\end{equation}
then with \eqref{eqn:hat_sigma_large}, we have
\begin{equation*}
    \xi(m)=\widehat\sigma_k(m)^2m^{2\rr},\qquad m\geq k+1,~m\in E_{\sigma_k}.
\end{equation*}
Now for $t>k$,
\begin{equation}
    \begin{split}
        \frac{\xi'(t)}{\xi(t)}=&\frac{d}{dt}\log\xi(t)=\frac{2\rr}{t} +\psi\lrt{\frac{t-k}{2}} -\psi\lrt{\frac{t+d+k+1}{2}}
    \end{split}
\end{equation}
where $\psi$ is the digamma function $\displaystyle\psi(t)=\frac{d}{dt}\lrt{\log\Gamma(t)}$. The derivatives of $\psi$ are called polygamma functions and have  series representations
\begin{equation*}
    \psi^{(j)}(t)=(-1)^{j+1}j!\sul_{\nu=0}^\infty(t+\nu)^{-(j+1)}\approx(-1)^{j+1}(j-1)!t^{-j},
\end{equation*}
where the notation $\approx$ here signifies $\displaystyle\lim\limits_{t\to\infty}\frac{\psi^{(j)}(t)}{(-1)^{j+1}(j-1)!t^{-j}}=1$.

We prove \eqref{eqn:def_theta} by induction. Suppose for each $j=0,\dots,r$ we have
\begin{equation}\label{eqn:lim_theta_j}
    \begin{split}
        (d+2k+1-2\rr)^j\leq&\lim\limits_{t\to\infty}\frac{(-1)^j\xi^{(j)}(t)}{t^{2\rr-(d+2k+1)-j}}\leq (d+2k+j+1-2\rr)^j,\\
        &(-1)^j\xi^{(j)}(t)\geq0,\qquad t\geq k+1,
    \end{split}
\end{equation}
then
{\small\begin{equation*}
    \begin{split}
        &\xi^{(\beta+1)}(t)=\lrt{\frac{d}{dt}}^{\beta}\lt[\xi(t)\frac{\xi'(t)}{\xi(t)}\rt]=\sul_{j=0}^\beta\binom{\beta}{j}\xi^{(\beta-j)}(t)\lrt{\frac{d}{dt}}^{j}\lrt{\frac{\xi'(t)}{\xi(t)}}\\
        =&\sul_{j=0}^\beta\binom{\beta}{j}\xi^{(\beta-j)}(t)\lrt{2\rr(-1)^jj!t^{-(j+1)}+2^{-j}\psi^{(j)}\lrt{\frac{t-k}{2}}-2^{-j}\psi^{(j)}\lrt{\frac{t+d+k+1}{2}}}\\
        \approx&\sul_{j=0}^\beta\binom{\beta}{j}\xi^{(\beta-j)}(t)\lt[2\rr(-1)^jj!t^{-(j+1)}+2^{-j}(-1)^{j+1}(j-1)!\lrt{\lrt{\frac{t-k}{2}}^{-j}-\lrt{\frac{t+d+k+1}{2}}^{-j}}\rt]\\
        \approx&\sul_{j=0}^\beta\binom{\beta}{j}\xi^{(\beta-j)}(t)\lt[2\rr(-1)^jj!t^{-(j+1)}+(-1)^{j+1}j!(d+2k+1)t^{-(j+1)}\rt]\\
        =&\sul_{j=0}^\beta\binom{\beta}{j}(d+2k+1-2\rr)\frac{(-1)^{\beta-j}\xi^{(\beta-j)}(t)}{t^{2\rr-(d+2k+1)-\beta+j}}(-1)^{\beta+1}t^{2\rr-(d+2k+1)-\beta-1}.
    \end{split}
\end{equation*}}
Then \eqref{eqn:lim_theta_j} yields
\begin{equation*}
    \begin{split}
        &(d+2k+1-2\rr)\sul_{j=0}^\beta\binom{\beta}{j}(d+2k+1-2\rr)^{\beta-j}\leq\lim\limits_{t\to\infty}\frac{(-1)^{\beta+1}\xi^{(\beta+1)}(t)}{t^{2\rr-(d+2k+1)-\beta-1}}\\
        \leq&(d+2k+1-2\rr)\sul_{j=0}^\beta\binom{\beta}{j}(d+2k+\beta-j+1-2\rr)^{\beta-j},
    \end{split}
\end{equation*}
which proves
\begin{equation}
    (d+2k+1-2\rr)^{\beta+1}\leq\lim\limits_{t\to\infty}\frac{(-1)^{\beta+1}\xi^{(\beta+1)}(t)}{t^{2\rr-(d+2k+1)-\beta-1}}\leq (d+2k+\beta+2-2\rr)^{\beta+1}.
\end{equation}
On the other hand,
\begin{equation*}
    \begin{split}
        &\lrt{\frac{d}{dt}}^{j}\lrt{\frac{\xi'(t)}{\xi(t)}}=2\rr(-1)^jj!t^{-(j+1)}+2^{-j}\psi^{(j)}\lrt{\frac{t-k}{2}}-2^{-j}\psi^{(j)}\lrt{\frac{t+d+k+1}{2}}\\
        =&2\rr(-1)^jj!t^{-(j+1)}+\frac{(-1)^{j+1}}{2^j}j!\sul_{\nu=0}^\infty\lrt{\lrt{\frac{t-k}{2}+\nu}^{-(j+1)}-\lrt{\frac{t+d+k+1}{2}+\nu}^{-(j+1)}}
    \end{split}
\end{equation*}
Some calculus estimation yields
\begin{equation*}
    (-1)^j\lrt{\frac{d}{dt}}^{j}\lrt{\frac{\xi'(t)}{\xi(t)}}>0,
\end{equation*}
together with the induction hypothesis \eqref{eqn:lim_theta_j}, it implies
\begin{equation*}
    (-1)^{\beta+1}\xi^{(\beta+1)}(t)=\sul_{j=0}^\beta\binom{\beta}{j}(-1)^{\beta-j}\xi^{(\beta-j)}(t)(-1)^{j+1}\lrt{\frac{d}{dt}}^{j}\lrt{\frac{\xi'(t)}{\xi(t)}}\geq0.
\end{equation*}
This completes the induction and gives
\begin{equation*}
    0\leq(-1)^\beta\xi^{(\beta)}(t)\lesssim t^{2\rr-(d+2k+1)-\beta},\qquad \beta=0,1,\dots
\end{equation*}
and for $\rr<\frac{d+2k+1}{2}$,
\begin{equation*}
    (-1)^\beta\xi^{(\beta)}(t)\simeq t^{2\rr-(d+2k+1)-\beta},\qquad \beta=0,1,\dots.
\end{equation*}
Consequently the formula
\begin{equation*}
    (-1)^\beta(\Delta^\beta\xi)(m)=\int_{m}^{m+1}\underbrace{\int_0^{1}\dots\int_0^{1}}_{\beta-1}(-1)^\beta\xi^{(\beta)}(t_1+\dots+t_\beta)dt_\beta\dots dt_1.
\end{equation*}
proves \eqref{eqn:def_theta} and \eqref{eqn:theta_beta}.

\end{proof}


\subsection{Properties of the polynomials with scattered points}\label{subsec:prop_Legendre}
In this subsection, we consider a finite subset $\{\theta_j^*\}_{j=1}^n\subset \mathbb{S}^{d}$ comprising distinct, scattered points. The mesh size for $\{\theta_j^*\}_{j=1}^n\subset \mathbb{S}^{d}$ is defined by

\begin{equation}
h=\mal_{\eta\in\SS^d}\min\limits_{1\leq j\leq n}\rho(\eta,\theta_j^*).
\end{equation}

The existence of positive quadrature rules on $\SS^d$ based on scattered points is a known result in approximation theory (see, e.g., \cite{mhaskar2001spherical,bondarenko2013optimal,jetter2023norming}). For our case, one could construct a $\{\theta_j^*\}_{j=1}^n$-compatible decomposition as described in \cite{mhaskar2001spherical} based on the spherical shells $\BB_\rho(\theta_j^*,h)\setminus\BB_\rho(\theta_j^*,\underline{h}/2)$. Then the following lemma follows straightforwardly by \cite[Theorem 4.1]{mhaskar2001spherical}.
\begin{lemma}[Theorem 4.1. \cite{mhaskar2001spherical}]\label{lem:quadrature}
    Given scattered points $\{\theta_j^*\}_{j=1}^n \subset \SS^d$ with mesh norm $h$, there exist nonnegative weights $\tau_1,\dots,\tau_n$ with $\tau_j\lesssim h^d$ and a constant $C_1$ (independent of $n, h$) such that
    \begin{equation}\label{eqn:quadrature}
        \fint_{\SS^d}p(\eta)q(\eta)d\eta=\sul_{j=1}^n\tau_jp(\theta_j^*)q(\theta_j^*),\quad \forall p,q\in\mathbb{P}_J(\SS^d),
    \end{equation}
    where $J=\lfloor C_1h^{-1}\rfloor$.
\end{lemma}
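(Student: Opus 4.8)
The plan is to reduce the statement to the theory of positive (Marcinkiewicz--Zygmund) quadrature for scattered data on the sphere, i.e. to \cite[Theorem 4.1]{mhaskar2001spherical}, which is precisely the result being quoted; the work on our side is setting up the right geometric input and doing the degree bookkeeping. First I would use the mesh-norm hypothesis to build a $\{\theta_j^*\}$-compatible measurable partition $\SS^d=\bigsqcup_{j=1}^n R_j$ with $\theta_j^*\in R_j$ and $R_j\subset\BB_\rho(\theta_j^*,h)$; the Voronoi cells of $\{\theta_j^*\}$ already do this, or one can use the shell-based construction referenced after the statement. Since a geodesic ball of radius $h$ on $\SS^d$ has normalized surface measure $\simeq h^d$, this forces $\fint_{R_j}1\,d\eta\lesssim h^d$, which is the source of the bound $\tau_j\lesssim h^d$.

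The analytic core, supplied by \cite{mhaskar2001spherical}, is a Marcinkiewicz--Zygmund inequality: there is an absolute constant $c_0>0$ such that whenever $Lh\le c_0$, every $P\in\mathbb{P}_L(\SS^d)$ satisfies $\|P\|_{\mathcal{L}^1(\SS^d)}\simeq\sum_{j=1}^n|R_j|\,|P(\theta_j^*)|$, and similarly in $\mathcal{L}^2$. The upper bound is a Bernstein/Nikolskii-type estimate: over a cap of radius $h$ the oscillation of $P$ is controlled by $h\|\nabla_{\SS^d}P\|\lesssim hL\|P\|$, so replacing $P$ by the constant $P(\theta_j^*)$ on $R_j$ costs only a fixed fraction of the norm once $hL$ is small; the lower (``norming'') bound follows from the same perturbation applied to exactness of the integral. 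Granting this, the positive quadrature is produced by a Hahn--Banach / linear-programming (Tchakaloff-type) argument: on the finite-dimensional space $\mathbb{P}_{2J}(\SS^d)$ the sampling functionals $P\mapsto P(\theta_j^*)$ span a cone containing the integration functional (nonemptiness is guaranteed by the MZ lower bound), and one can select representing coefficients $\tau_j\ge0$ with $\tau_j\lesssim|R_j|\lesssim h^d$. Finally I would fix $C_1$ small enough that $2Jh\le c_0$ for $J=\lfloor C_1h^{-1}\rfloor$, so the machinery applies in degree $2J$; since $pq\in\mathbb{P}_{2J}(\SS^d)$ for $p,q\in\mathbb{P}_J(\SS^d)$, the rule reproduces $\fint_{\SS^d}pq\,d\eta$ exactly, which is \eqref{eqn:quadrature}.

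The main obstacle lies entirely inside the cited theorem rather than in the reduction: obtaining the Marcinkiewicz--Zygmund/norming inequality for genuinely scattered points (no separation or quasi-uniformity assumed), with constants depending only on the mesh norm $h$, and then converting the norming property into a \emph{positive} quadrature rule whose weights are controlled by the local cap measure $\lesssim h^d$ rather than merely being nonnegative. In a self-contained write-up the only points to check carefully are that the compatible decomposition genuinely yields $|R_j|\lesssim h^d$ (true because $R_j$ sits inside a radius-$h$ cap) and that the choice of $C_1$ is made \emph{after}, not before, the absolute constant $c_0$ from \cite{mhaskar2001spherical} has been fixed.
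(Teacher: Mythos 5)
Your proposal is correct and takes essentially the same route as the paper: both reduce the lemma to \cite[Theorem 4.1]{mhaskar2001spherical} by constructing a $\{\theta_j^*\}$-compatible decomposition of $\SS^d$ into small regions (the paper via spherical shells, you via Voronoi cells or shells), from which the weight bound $\tau_j\lesssim h^d$ follows by the cap-measure estimate, and by choosing $C_1$ so that the quadrature is exact in degree $2J$. Your more detailed unpacking of the Marcinkiewicz--Zygmund and positive-quadrature machinery lives inside the cited theorem and is consistent with what the paper is relying on.
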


In this paper, the matrices induced by Legendre polynomials, defined as
\begin{equation}\label{eqn:def_mathcalPn0}
P(m) = \lrt{p_m(\theta_i^* \cdot \theta_j^*)}_{i,j=1}^n,
\end{equation}
plays a crucial role in our analysis. In this subsection, we summarize the key properties of these matrices, which are essential for the proof of our main result and will be utilized in the next section.

   For $\beta\in\NN$, we denote sequences of matrices $\lt\{P_\beta(m)\rt\}_{m=0}^\infty$ inductively by
    \begin{equation}\label{eqn:p_n^r_explicit}
        P_{\beta+1}(m)=\sul_{\nu=0}^mP_{\beta}(\nu)=\lrt{\nabla^{-\beta}P}(m)=\sul_{\nu=0}^m\binom{m+\beta-\nu}{\beta}P(\nu),\qquad m\in\NN.
    \end{equation}
    We also allow the summation to begin at indices other than $0$. Given the sequence $\{P_\beta(m)\}_{m=K}^\infty$, we denote the sequences $\{P_{K,\beta+1}(m)\}_{m=K}^\infty$ by
    \begin{equation}
        P_{K,\beta+1}(m)=(\nabla^{-1}P_\beta)(m)=\sul_{\nu=K}^mP_\beta(\nu),\qquad m\geq K.
    \end{equation}

An important property of the Legendre polynomials is that the Ces\`aro summations, $\{P_\beta(m)\}_{m=0}^\infty$, become increasingly ``diagonally dominated" as $\beta$ grows (see, e.g., \cite{chanillo1993weak}). Specifically, the ratio
$\frac{\lt|\lrt{P_\beta(m)}_{i,i}\rt|}{\sum_{i\neq j}\lt|\lrt{P_\beta(m)}_{i,j}\rt|}$
increases with $\beta$. This behavior indicates that the Ces\`aro summations of $\lt\{P_\beta(m)\rt\}_{m=0}^\infty$ exhibit a highly localized property, which plays a crucial role in studying approximation properties (see, e.g., \cite[Chapter 2.6, Chapter 11.4]{dai2013approximation}). A similar phenomenon arises in Fourier analysis, where the Ces\`aro summations of the Dirichlet kernels, known as Fejér kernels, exhibit localization properties (see, e.g., \cite[Chapter 7]{devore1993constructive}). This highly-localized property is also significant in analyzing kernel approximation properties (see \cite{mhaskar2010eignets, mhaskar2020kernel}).

Lemma \ref{lem:property_p_nr} establishes several key properties of $P_\beta(m)$, enabling sharp estimations in the proof presented in Section \ref{sec:pf_sphere}. Before proving Lemma \ref{lem:property_p_nr}, we restate a related result from \cite[Corollary 14.7]{chanillo1993weak} using our notation.

\begin{lemma}\label{lem:est_p_nij}
For $\beta\geq2$ and all $i\neq j$,
    \begin{equation}
        \lt|(P_\beta(m))_{i,j}\rt|\lesssim\frac{m^{\frac{d-1}{2}}}{\rho(\theta_i^*,\theta_j^*)^{\frac{d-1}{2}+\beta}\lrt{\max\lt\{\rho(\theta_i^*,-\theta_j^*),\frac{1}{m}\rt\}}^{\frac{d-1}{2}}}.
    \end{equation}
    where the corresponding constant is only dependent of $d$.
\end{lemma}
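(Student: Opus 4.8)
The plan is to read off the claimed bound from \cite[Corollary 14.7]{chanillo1993weak}; the only work is to match conventions between the two papers. First observe that the $(i,j)$ entry of $P_\beta(m)$ depends on $\theta_i^*,\theta_j^*$ only through $t_{ij}:=\theta_i^*\cdot\theta_j^*$: unwinding \eqref{eqn:p_n^r_explicit} gives $P_\beta=\nabla^{-(\beta-1)}P$, so
\begin{equation*}
    (P_\beta(m))_{i,j}=\sul_{\nu=0}^m\binom{m+\beta-1-\nu}{\beta-1}p_\nu(t_{ij}),\qquad\binom{m+\beta-1-\nu}{\beta-1}\simeq(m-\nu+1)^{\beta-1}.
\end{equation*}
By the addition formula \eqref{eqn:sum_Y_nl}, $p_\nu(t_{ij})=\sul_{\ell}Y_{\nu,\ell}(\theta_i^*)Y_{\nu,\ell}(\theta_j^*)$ is the zonal (reproducing) kernel of $\YY_\nu$, so $(P_\beta(m))_{i,j}$ is precisely the \emph{un-normalized} Ces\`aro (Riesz) kernel of order $\beta-1$ on $\SS^d$, evaluated at the pair $(\theta_i^*,\theta_j^*)$; equivalently it is $\binom{m+\beta-1}{\beta-1}\simeq m^{\beta-1}$ times the normalized order-$(\beta-1)$ Ces\`aro mean kernel, which is exactly the object estimated pointwise in \cite{chanillo1993weak}.

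Next I would set up the dictionary. The weight $(1-t^2)^{\frac{d-2}{2}}$ attached to $\SS^d$ corresponds to Jacobi parameters $\alpha=\beta_{\mathrm{Jac}}=\frac{d-2}{2}$, so that $\alpha+\frac12=\frac{d-1}{2}$ is the exponent of $m$ appearing in the numerator of the claim; the angular variable is $\rho(\theta_i^*,\theta_j^*)=\arccos t_{ij}$; and the identity $p_\nu(-t)=(-1)^\nu p_\nu(t)$ shows that the behaviour near $t=-1$, i.e.\ the near-antipodal regime, is governed by $\pi-\rho(\theta_i^*,\theta_j^*)=\arccos(-t_{ij})=\rho(\theta_i^*,-\theta_j^*)$, which is exactly the quantity truncated at scale $1/m$ in the Jacobi kernel bounds. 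Substituting these correspondences into \cite[Corollary 14.7]{chanillo1993weak} (stated there for the normalized kernel), multiplying back by $\binom{m+\beta-1}{\beta-1}\simeq m^{\beta-1}$ and absorbing the finitely many $d$-dependent normalization constants coming from \eqref{eqn:Pn_normalization}, yields
\begin{equation*}
    \lt|(P_\beta(m))_{i,j}\rt|\lesssim\frac{m^{\frac{d-1}{2}}}{\rho(\theta_i^*,\theta_j^*)^{\frac{d-1}{2}+\beta}\lrt{\max\lt\{\rho(\theta_i^*,-\theta_j^*),\frac1m\rt\}}^{\frac{d-1}{2}}},
\end{equation*}
with a constant depending only on $d$.

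The only genuine obstacle is the bookkeeping of normalizations: the polynomials $p_m$ used here are normalized by \eqref{eqn:sum_Y_nl}, so $p_m(1)=N(m)\simeq m^{d-1}$, a convention agreeing neither with the $\mathcal{L}^2_{w_d}$-normalization nor with the value-at-$1$ normalization used in \cite{chanillo1993weak,dai2013approximation,szego1975orthogonal}. Every power of $m$ in the final estimate must therefore be tracked through the change of normalization $p_\nu=\frac{\omega_d}{\omega_{d-1}}\hat p_\nu(\cdot)\hat p_\nu(1)$ (with $\hat p_\nu$ the $\mathcal{L}^2_{w_d}$-normalized Legendre polynomial) and through the relation between the Ces\`aro order $\beta-1$ and the index $\beta$ of this lemma (note $P_1=P$). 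As a consistency check at the diagonal scale $\rho(\theta_i^*,\theta_j^*)\simeq\frac1m$, the right-hand side is $\simeq m^{d-1+\beta}$, which matches $(P_\beta(m))_{i,i}=\sul_{\nu=0}^m\binom{m+\beta-1-\nu}{\beta-1}N(\nu)\simeq m^{d-1+\beta}$; the substance of \cite[Corollary 14.7]{chanillo1993weak} lies in the decay for $\rho(\theta_i^*,\theta_j^*)\gg\frac1m$ and, through the factor $\bigl(\max\{\rho(\theta_i^*,-\theta_j^*),\frac1m\}\bigr)^{\frac{d-1}{2}}$ in the denominator, the additional smallness near the antipode coming from the alternating structure $p_\nu(-t)=(-1)^\nu p_\nu(t)$. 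Finally one should check that the Ces\`aro order $\beta-1\ge1$ lies in the range for which \cite[Corollary 14.7]{chanillo1993weak} is stated when $\alpha=\frac{d-2}{2}$; if a strictly larger order were required there, this is harmless, as only such orders are actually invoked in Lemma \ref{lem:property_p_nr} and its consequences.
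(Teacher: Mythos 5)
Your proposal follows the same route as the paper's proof: identify $(P_\beta(m))_{i,j}$, up to the factor $\binom{m+\beta-1}{\beta-1}$ and normalization constants that cancel via \eqref{eqn:Pn_normalization}, with the Ces\`aro kernel $L_m^{(\frac{d-2}{2},\frac{d-2}{2}),\beta}(t,1)$ bounded in \cite[Corollary 14.7]{chanillo1993weak}, and then translate $\sqrt{1\pm t_{ij}}$ into the geodesic distances $\rho(\theta_i^*,\mp\theta_j^*)$. The paper writes out that corollary's two-term bound and uses $\rho(\theta_i^*,\theta_j^*)\gtrsim 1/m$ to absorb the lower-order term into the stated estimate, a small step you should make explicit, but the approach and the normalization bookkeeping are the same as yours.
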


\begin{proof}
    Let $\Bigl\{p_n^{(\frac{d-2}{2},\frac{d-2}{2})}\Bigr\}_{n=0}^\infty$ be the Legendre polynomials defined in \cite{chanillo1993weak}, then each $p_n^{(\frac{d-2}{2},\frac{d-2}{2})}$ must equal to $p_n$ multiplied by a real number. The function $L_n^{(\frac{d-2}{2},\frac{d-2}{2}),\beta}$ is denoted in \cite[(2.27)]{chanillo1993weak} by, for $t\in[-1,1]$,
    \begin{equation}
        \begin{split}
            \binom{n+\beta-1}{\beta-1}L_n^{(\frac{d-2}{2},\frac{d-2}{2}),\beta}(t,1)=&\sul_{\nu=0}^n\binom{n+\beta-1-\nu}{\beta-1}\frac{ p_\nu^{(\frac{d-2}{2},\frac{d-2}{2})}(t) p_\nu^{(\frac{d-2}{2},\frac{d-2}{2})}(1)}{\| p_\nu^{(\frac{d-2}{2},\frac{d-2}{2})}\|_{L_{w_d}^2([-1,1])}^2}\\
            =&\sul_{\nu=0}^n\binom{n+\beta-1-\nu}{\beta-1}\frac{ p_\nu(t) p_\nu(1)}{\| p_\nu\|_{L_{w_d}^2([-1,1])}^2}\\
            =&\frac{\omega_{d-1}}{\omega_d}\sul_{\nu=0}^n\binom{n+\beta-1-\nu}{\beta-1} p_\nu(t)=\frac{\omega_{d-1}}{\omega_d}p_{n,\beta}(t).
        \end{split}
    \end{equation}
    where the last equality follows by \eqref{eqn:Pn_normalization}.
By \cite[Corollary 14.7]{chanillo1993weak},
    \begin{equation}
    \begin{split}
        &\lt|L_n^{(\frac{d-2}{2},\frac{d-2}{2})}(t,1)\rt|\\
        \lesssim&\lrt{n\lrt{\sqrt{\frac{1-t}{2}}+\frac{1}{n}}^{d-1}\left(\sqrt{1-t}+\frac{1}{n}\right)^2}^{-1}\\
        &+\lrt{n^{\beta-1}\lrt{\sqrt{1-t}+\frac{1}{n}}^{\frac{d-1}{2}}\lrt{\sqrt{1+t}+\frac{1}{n}}^{\frac{d-1}{2}}\lrt{\frac{1}{n}}^{\frac{d-1}{2}}\left(\sqrt{1-t}+\frac{1}{n}\right)^\beta}^{-1}
    \end{split}
\end{equation}
By noticing the formula
$$\sqrt{1-\theta_i\cdot \theta_j}=\sqrt{1-\cos(\rho(\theta_i,\theta_j))}=\sqrt{2}\sin\lrt{\frac{\rho(\theta_i,\theta_j)}{2}},\qquad \theta_i\cdot \theta_j\geq0,$$
we have
$$\sqrt{1-\theta_i\cdot \theta_j}\simeq\rho(\theta_i,\theta_j).$$
As $n\geq M\gtrsim\underline{h}$, we have
$$\rho(\theta_i,\theta_j)\geq\underline{h}\gtrsim\frac{1}{n},$$
then
\begin{equation*}\label{eqn:P_ij_s>1}
    \begin{split}
        \lt| p_{n,\beta}(\theta_i\cdot \theta_j)\rt|\lesssim&n^{\beta-1}\lrt{\lrt{n\rho(\theta_i,\theta_j)^{d+1}}^{-1}+\lrt{n^{\beta-1}\rho(\theta_i,\theta_j)^{\frac{d-1}{2}}\rho(\theta_i,-\theta_j)^{\frac{d-1}{2}}\lrt{\frac{1}{n}}^{\frac{d-1}{2}}\rho(\theta_i,\theta_j)^\beta}^{-1}}\\
        \lesssim&\frac{n^{\frac{d-1}{2}}}{\rho(\theta_i,\theta_j)^{\frac{d-1}{2}+\beta}\lrt{\max\lt\{\rho(\theta_i,-\theta_j),\frac{1}{n}\rt\}}^{\frac{d-1}{2}}}.
    \end{split}
\end{equation*}
\end{proof}

    \begin{lemma}\label{lem:property_p_nr}
        Let $r\in\NN$, the matrices $\lt\{P_\beta(m)\rt\}$ have the following properties:
        \begin{itemize}
            \item [(a)] Each $P_\beta(m)$ is semi-positive definite.
            \item [(b)] The diagonal elements of $P_\beta(m)$ are equal and satisfy
            \begin{equation}\label{eqn:diag_p_nr}
                \lrt{P_\beta(m)}_{i,i}=\max\limits_{1\leq i,j\leq n}\lt|\lrt{P_\beta(m)}_{i,j}\rt|\simeq m^{\beta+d-1},
            \end{equation}
            \item [(c)] Let $\alpha=\lceil\frac{d+2}{2}\rceil$ and $J$ be the integer in Lemma \ref{lem:quadrature},
        \begin{equation}\label{eqn:est_Pns_2norm}
        \lt\|P_\alpha(m)\rt\|_2\lesssim m^{\alpha+d-1}\lrt{\frac{h}{\underline{h}}}^{\frac{d-1}{2}+\alpha},\quad m\geq J+1,
    \end{equation}
    where $\underline{h}:= \min\limits_{i\neq j} \rho(\theta_i^*,\theta_j^*)$. 
        \end{itemize}
    \end{lemma}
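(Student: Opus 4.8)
The plan is to obtain (a) and (b) as immediate consequences of the addition formula \eqref{eqn:sum_Y_nl} together with the positivity it supplies, and then to derive (c) from a row‑sum (Schur) bound on $\|P_\alpha(m)\|_2$ fed by the pointwise localization estimate of Lemma~\ref{lem:est_p_nij}.

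For (a) and (b): by \eqref{eqn:sum_Y_nl} one has $P(\nu)=\sum_{\ell=1}^{N(\nu)}v_{\nu,\ell}v_{\nu,\ell}^{\top}$ with $v_{\nu,\ell}=\bigl(Y_{\nu,\ell}(\theta_1^*),\dots,Y_{\nu,\ell}(\theta_n^*)\bigr)^{\top}$, so each $P(\nu)$ is semi‑positive definite; since $P_\beta(m)=\sum_{\nu=0}^{m}\binom{m+\beta-1-\nu}{\beta-1}P(\nu)$ by \eqref{eqn:p_n^r_explicit} is a nonnegative linear combination of the $P(\nu)$, it is semi‑positive definite, which is (a). For (b), the diagonal entry of $P_\beta(m)$ is $\sum_{\nu=0}^{m}\binom{m+\beta-1-\nu}{\beta-1}p_\nu(1)$, and by \eqref{eqn:Pn_normalization} $p_\nu(1)=N(\nu)$, which is independent of $i$; thus all diagonal entries coincide. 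Using $N(\nu)\simeq(\nu+1)^{d-1}$ and $\binom{m+\beta-1-\nu}{\beta-1}\simeq(m-\nu+1)^{\beta-1}$ for $0\le\nu\le m$, this sum is comparable to the discrete Beta integral $\sum_{\nu=0}^{m}(m-\nu+1)^{\beta-1}(\nu+1)^{d-1}\simeq m^{\beta+d-1}$ (for the lower bound it already suffices to keep $\nu\in[m/2,3m/4]$). Finally $P_\beta(m)\succeq0$ gives, by Cauchy–Schwarz, $|(P_\beta(m))_{i,j}|\le\sqrt{(P_\beta(m))_{i,i}(P_\beta(m))_{j,j}}=(P_\beta(m))_{i,i}$, so the largest modulus among the entries is attained on the diagonal; this is \eqref{eqn:diag_p_nr}.

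For (c): since $P_\alpha(m)$ is symmetric, $\|P_\alpha(m)\|_2\le\max_i\sum_{j=1}^n\bigl|(P_\alpha(m))_{i,j}\bigr|$. The diagonal term is $\simeq m^{\alpha+d-1}$ by (b). For $j\ne i$ I invoke Lemma~\ref{lem:est_p_nij} (valid since $\alpha=\lceil\frac{d+2}{2}\rceil\ge2$), namely
\[
  \bigl|(P_\alpha(m))_{i,j}\bigr|\ \lesssim\ \frac{m^{\frac{d-1}{2}}}{\rho(\theta_i^*,\theta_j^*)^{\frac{d-1}{2}+\alpha}\bigl(\max\{\rho(\theta_i^*,-\theta_j^*),m^{-1}\}\bigr)^{\frac{d-1}{2}}}.
\]
Because the $\theta_j^*$ are $\underline h$‑separated, a geodesic ball of radius $r$ contains at most $\lesssim(1+r/\underline h)^d$ of them. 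I split $\{j\ne i\}$ into dyadic geodesic shells about $\theta_i^*$ (on which $\rho(\theta_i^*,-\theta_j^*)\simeq1$, so only the factor $\rho(\theta_i^*,\theta_j^*)^{-\frac{d-1}{2}-\alpha}$ is active) and dyadic shells about $-\theta_i^*$ (on which $\rho(\theta_i^*,\theta_j^*)\simeq1$, so only the antipodal factor with its floor $m^{-1}$ is active). A shell of radius $r\in[\underline h,\pi]$ about $\theta_i^*$ contributes $\lesssim(r/\underline h)^d\,m^{\frac{d-1}{2}}r^{-\frac{d-1}{2}-\alpha}$; since $\alpha=\lceil\frac{d+2}{2}\rceil>\frac{d+1}{2}$ the exponent of $r$ here is negative, so the geometric sum is dominated by the finest scale $r\simeq\underline h$ and totals $\lesssim m^{\frac{d-1}{2}}\underline h^{-\frac{d-1}{2}-\alpha}$. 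The shells about $-\theta_i^*$ are treated the same way (the finest active scale now being $r\simeq m^{-1}$, with $O(1)$ or $\lesssim(m\underline h)^{-d}$ points inside the ball of radius $m^{-1}$) and contribute a term of the same or smaller order. Hence $\|P_\alpha(m)\|_2\lesssim m^{\alpha+d-1}+m^{\frac{d-1}{2}}\underline h^{-\frac{d-1}{2}-\alpha}$.

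It remains to put this into the stated form. Since $m\ge J+1$ with $J=\lfloor C_1 h^{-1}\rfloor$ we have $mh\gtrsim1$, whence $m^{\frac{d-1}{2}}\underline h^{-\frac{d-1}{2}-\alpha}=(mh)^{-\frac{d-1}{2}-\alpha}m^{\alpha+d-1}(h/\underline h)^{\frac{d-1}{2}+\alpha}\lesssim m^{\alpha+d-1}(h/\underline h)^{\frac{d-1}{2}+\alpha}$, and the first term $m^{\alpha+d-1}$ is also $\lesssim m^{\alpha+d-1}(h/\underline h)^{\frac{d-1}{2}+\alpha}$ because $h/\underline h\gtrsim1$ (indeed $\underline h\le2h$). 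This yields \eqref{eqn:est_Pns_2norm}. Items (a) and (b) are routine; the real work is (c), and within it the main obstacle is organizing the dyadic summation of the Lemma~\ref{lem:est_p_nij} bound so that the separation $\underline h$ is used sharply and the critical scale comes out to be $\underline h$ rather than $1$ — which is precisely where the exact value $\alpha=\lceil\frac{d+2}{2}\rceil$ is needed — and controlling the shells near the antipode $-\theta_i^*$, where the interaction between the cut‑off $\max\{\rho(\theta_i^*,-\theta_j^*),m^{-1}\}$ and the separation is what forces the slightly lossy factor $(h/\underline h)^{\frac{d-1}{2}+\alpha}$.
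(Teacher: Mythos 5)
Your proposal is correct and follows essentially the same line of argument as the paper: the addition formula $p_m(\theta_i^*\cdot\theta_j^*)=\sum_\ell Y_{m,\ell}(\theta_i^*)Y_{m,\ell}(\theta_j^*)$ gives positive semidefiniteness of each $P(\nu)$ and hence of the nonnegative Ces\`aro combination, the diagonal entries equal the convolution $\sum_\nu\binom{m+\beta-1-\nu}{\beta-1}N(\nu)\simeq m^{\beta+d-1}$, and the spectral norm in (c) is controlled by a row sum fed by Lemma~\ref{lem:est_p_nij} with a dyadic shell decomposition around $\theta_i^*$ and $-\theta_i^*$. The only cosmetic differences from the paper are that you derive the max‑modulus claim in (b) via the Cauchy--Schwarz inequality for PSD matrices rather than the pointwise bound $|p_m(t)|\le p_m(1)$, and you use $\|P_\alpha(m)\|_2\le\|P_\alpha(m)\|_\infty$ directly for a symmetric matrix instead of the equivalent $\sqrt{\|P_\alpha(m)\|_1\|P_\alpha(m)\|_\infty}$; also, your parenthetical about the ``finest active scale $r\simeq m^{-1}$'' for the antipodal shells is phrased misleadingly (there the exponent of $r$ in the shell contribution is positive, so the coarsest scale $r\simeq1$ dominates, yielding $\lesssim m^{\frac{d-1}{2}}\underline h^{-d}$), but your stated conclusion that this contribution is of the same or smaller order than $m^{\frac{d-1}{2}}\underline h^{-\frac{d-1}{2}-\alpha}$ is correct precisely because $\alpha>\frac{d+1}{2}$, so the final bound holds.
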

    \begin{proof}
        To prove (a), it suffices to show each $P(m)$ is semi-positive definite. For any vector $a\in\RR^n$, by \eqref{eqn:sum_Y_nl},
        \begin{equation}\label{eqn:Pn0=YY}
            \begin{split}
                a^\top P(m)a=&\sul_{1\leq i,j\leq n}a_ia_jp(\theta_i^*\cdot \theta_j^*)=\sul_{1\leq i,j\leq n}\sul_{\ell=1}^{N(m)}a_ia_jY_{m,\ell}(\theta_i^*)Y_{m,\ell}(\theta_j^*)\\
                =&\sul_{\ell=1}^{N(m)}\lrt{\sul_{j=1}^na_jY_{m,\ell}(\theta_j^*)}^2\geq0.
            \end{split}
        \end{equation}
        This proves $P(m)$ is semi-positive definite and consequently (a).

        To prove (b), we apply \eqref{eqn:sum_Y_nl}, \eqref{eqn:Pn_normalization} and write
\begin{equation}\label{eqn:p_n_max}
\begin{split}
    |p_m(\eta\cdot \theta)| &= \left|\sum_{\ell=1}^{N(m)} Y_{m,\ell}(\eta)Y_{m,\ell}(\theta)\right|\\ &\leq \left(\sum_{\ell=1}^{N(m)} Y_{m,\ell}(\eta)^2\right)^{1/2}\left(\sum_{\ell=1}^{N(m)} Y_{m,\ell}(\theta)^2\right)^{1/2}\\
    & = p_m(1)= N(m).
\end{split}
\end{equation}
Thus $\lrt{P(m)}_{i,j}=p_m(\theta_i^*\cdot \theta_j^*)$ attains its maximum value $p_m(1)=N(m)$ when $i=j$. Therefore,
        \begin{equation}
            \begin{split}
                \max\limits_{1\leq i,j\leq n}\lt|\lrt{P_\beta(m)}_{i,j}\rt|=&\lrt{P_\beta(m)}_{i,i}=\lt|\sul_{\nu=0}^m\binom{m+\beta-1-\nu}{\beta-1}\lrt{P_{\nu,0}}_{i,i}\rt|\\
                =&\sul_{\nu=0}^m\binom{m+\beta-1-\nu}{\beta-1}N(\nu)\simeq m^{\beta+d-1},
            \end{split}
        \end{equation}
        To prove (c), we apply Lemma \ref{lem:est_p_nij}. In our case, $\alpha=\lceil\frac{d+2}{2}\rceil\geq2$, so Lemma \ref{lem:est_p_nij} holds true. Given this lemma, we divide the set $\{\theta_i^*:~1\leq i\leq n,~i\neq j\}$ in terms of the distance to $\theta_j^*$ and $-\theta_j^*$ as
\begin{equation*}
    \{\theta_i:~1\leq i\leq n,~i\neq j\}=\mathcal{I}_{-1,j}\cup\bigcup\limits_{p=0}^{\lceil\log_2\lrt{\frac{\pi}{2\underline{h}}}\rceil}\lrt{\mathcal{I}_{p,j,+}\cup\mathcal{I}_{p,j,-}},
\end{equation*}
where $\mathcal{I}_{-1,j}:=\lt\{i:\rho(\theta_i^*,-\theta_j^*)<\underline{h}\rt\}$ and for $p=0,1,\dots$,
$$\mathcal{I}_{p,j,+}:=\{i:2^p\underline{h}\leq\rho(\theta_i^*,\theta_j^*)<2^{p+1}\underline{h}\},\quad\mathcal{I}_{p,j,-}:=\{i:2^p\underline{h}\leq\rho(\theta_i^*,-\theta_j^*)<2^{p+1}\underline{h}\}.$$
By a measure argument, it is easy to verify
$$\#\mathcal{I}_{-1,j}\lesssim1,\quad\#\mathcal{I}_{p,j,+}\lesssim2^{pd},\quad\#\mathcal{I}_{p,j,-}\lesssim2^{pd}$$
where the corresponding constants are only dependent of $d$.

By Lemma \ref{lem:est_p_nij},
\begin{equation*}
    \lt|\lrt{P_{\alpha}(m)}_{i,j}\rt|\lesssim\frac{m^{\frac{d-1}{2}}}{\rho(\theta_i^*,\theta_j^*)^{\frac{d-1}{2}+\alpha}\lrt{\max\lt\{\rho(\theta_i^*,-\theta_j^*),\frac{1}{m}\rt\}}^{\frac{d-1}{2}}},
\end{equation*}
then we can write
\begin{equation}\label{eqn:P_ij}
    \begin{split}
        \sul_{j\neq i}\lt|\lrt{P_{\alpha}(m)}_{i,j}\rt|\lesssim& \sul_{p=0}^{\lceil\log_2\lrt{\frac{\pi}{2\underline{h}}}\rceil}\sul_{i\in\mathcal{I}_{p,j,+}\cup\mathcal{I}_{p,j,-}}\frac{m^{\frac{d-1}{2}}}{\rho(\theta_i^*,\theta_j^*)^{\frac{d-1}{2}+\alpha}\rho(\theta_i^*,-\theta_j^*)^{\frac{d-1}{2}}}+\sul_{i\in\mathcal{I}_{-1,j,-}}\frac{m^{d-1}}{\rho(\theta_i^*,\theta_j^*)^{\frac{d-1}{2}+\alpha}}\\
        \lesssim&\sul_{p=0}^{\lceil\log_2\lrt{\frac{\pi}{2\underline{h}}}\rceil}\sul_{i\in\mathcal{I}_{p,j,+}\cup\mathcal{I}_{p,j,-}}(2^p\underline{h})^{-\lrt{\frac{d-1}{2}+\alpha}}m^{\frac{d-1}{2}}+\sul_{i\in\mathcal{I}_{-1,j}}m^{d-1}\\
        \lesssim&\sul_{p=0}^{\lceil\log_2\lrt{\frac{\pi}{2\underline{h}}}\rceil}2^{pd}(2^p\underline{h})^{-\lrt{\frac{d-1}{2}+\alpha}}m^{\frac{d-1}{2}}+m^{d-1}\\
        \lesssim&m^{\frac{d-1}{2}}\underline{h}^{-(\frac{d-1}{2}+\alpha)}+m^{d-1},
    \end{split}
\end{equation}
where the last inequality follows by recalling $\alpha=\lceil\frac{d+2}{2}\rceil$.

Together with \eqref{eqn:diag_p_nr}, we get
    \begin{equation}\label{eqn:P_row_sum}
        \begin{split}
            \sul_{j=1}^n\lt|\lrt{P_{\alpha}(m)}_{i,j}\rt|\lesssim m^{d-1}+m^{\frac{d-1}{2}}\underline{h}^{-(\frac{d-1}{2}+\alpha)}+m^{\alpha+d-1}\lesssim m^{\alpha+d-1}\lrt{\frac{h}{\underline{h}}}^{\frac{d-1}{2}+\alpha},\quad m\geq J+1.
        \end{split}
    \end{equation}
    Thus we can estimate the matrix norm of $P_{\alpha}(m)$ as
    \begin{equation}
        \lt\|P_{\alpha}(m)\rt\|_2\leq\sqrt{\lt\|P_{\alpha}(m)\rt\|_1\lt\|P_{\alpha}(m)\rt\|_\infty}\lesssim m^{\alpha+d-1}\lrt{\frac{h}{\underline{h}}}^{\frac{d-1}{2}+\alpha},\quad m\geq J+1.
    \end{equation}
    \end{proof}

\section{Approximation and characterization theorems for Sobolev spaces on spheres}\label{sec:pf_sphere}

In this section, we establish a spherical counterpart of Theorem \ref{thm:appr_rate_ball} and \ref{thm:barron_sob_equ}. We begin by addressing the spherical case because the function $\sigma_k(\theta\cdot\eta) \in \mathcal{L}^2(\mathbb{S}^d)$ admits a spherical harmonic expansion, whereas obtaining such an expansion for functions of the form $\sigma_k(w\cdot x + b)$ is challenging (as they are not periodic and prevents an application of Fourier series). This approach is partially inspired by \cite{bach2017breaking} and \cite{yang2024optimal}.

\subsection{Approximating functions on spheres by ReLU$^k$ linear vector spaces}
\begin{theorem}\label{thm:appr_rate_sph}
    Let $d,n\in\NN$, $k\in\NN_0$, $\rr\in(0,\frac{d+2k+1}{2}]$, $\ss\leq\min\{k,\rr\}$, and $\{\theta_j^*\}_{j=1}^n\subset\SS^d$. Then for any $f\in\mathcal{H}^\rr(\SS^d)$ satisfying
    \begin{equation}\label{eqn:f_evenodd_relu}
        f(\eta)=(-1)^{k+1}f(-\eta),\qquad \eta\in\SS^d,
    \end{equation}
    there exists $a\in\RR^n$ with $\|a\|_2\lesssim h^{-\frac{2k+1-2\rr}{2}}\|f\|_{\mathcal{H}^{\rr}(\SS^d)}$ such that
    \begin{equation}\label{eqn:rate_nonuniform}
        \Bigl\|f-\sul_{j=1}^n a_j\sigma_k(\theta_j^*\cdot\circ)\Bigr\|_{\mathcal{H}^\ss(\SS^d)}\lesssim h^{\rr-\ss}\lt\|f\rt\|_{\mathcal{H}^\rr(\SS^d)}.
    \end{equation}
    where
    $$h=\mal_{\eta\in\SS^d}\min\limits_{1\leq j\leq n}\rho(\eta,\theta_j^*).$$
    If the collection $\{\theta_j^*\}_{j=1}^n\subset\SS^d$ is well-distributed, for any $f\in\mathcal{H}^\rr(\SS^d)$, there exists $a\in\RR^n$ with $\|a\|_2\lesssim n^{\frac{2k+1-2\rr}{2d}}\|f\|_{\mathcal{H}^{\rr}(\SS^d)}$ such that
    \begin{equation}\label{eqn:rate_uniform}
        \Bigl\|f-\sul_{j=1}^n a_j\sigma_k(\theta_j^*\cdot\circ)\Bigr\|_{\mathcal{H}^\ss(\SS^d)}\lesssim h^{\rr-\ss}\lt\|f\rt\|_{\mathcal{H}^\rr(\SS^d)}\simeq n^{-\frac{\rr-\ss}{d}}\lt\|f\rt\|_{\mathcal{H}^\rr(\SS^d)}.
    \end{equation}
    With $\rr=\frac{d+2k+1}{2}$ and $\ss=0$, there exists $a\in\RR^n$ with $\|a\|_2\lesssim n^{-\frac{1}{2}}\|f\|_{\mathcal{H}^{\frac{d+2k+1}{2}}(\SS^d)}$ such that
    \begin{equation}
        \Bigl\|f-\sul_{j=1}^n a_j\sigma_k(\theta_j^*\cdot\circ)\Bigr\|_{\mathcal{L}^2(\SS^d)}\lesssim n^{-\frac{1}{2}-\frac{2k+1}{2d}}\lt\|f\rt\|_{\mathcal{H}^{\frac{d+2k+1}{2}}(\SS^d)}.
    \end{equation}
    All the corresponding constants are independent of $n,\{\theta_j^*\}_{j=1}^n$, and $f$.
\end{theorem}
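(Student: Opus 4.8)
## Proof strategy for Theorem~\ref{thm:appr_rate_sph}

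\textbf{Overall approach.} The plan is to work entirely in the spherical harmonic domain, exploiting the Legendre expansion $\sigma_k = \sum_m \widehat{\sigma_k}(m) p_m$ from \eqref{eqn:Legendre_expansion_ReLUk} together with the addition formula \eqref{eqn:sum_Y_nl}. Given a target $f \in \mathcal{H}^\rr(\SS^d)$ satisfying the parity condition \eqref{eqn:f_evenodd_relu}, the key observation is that since $\widehat{\sigma_k}(m) \ne 0$ precisely on $E_{\sigma_k}$ (which, modulo the low-order block, consists of the integers $m$ with $m - k$ odd), the function $\sigma_k(\theta \cdot \circ)$ lives exactly in the same parity class as $f$. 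So one should look for coefficients $a \in \RR^n$ such that $\sum_j a_j \sigma_k(\theta_j^* \cdot \eta)$ matches $f$ on the relevant harmonic components. The natural ansatz is to solve a linear system involving the Gram-type matrix built from $\{p_m(\theta_i^* \cdot \theta_j^*)\}$; concretely, I would set $a = $ (something like) a regularized inverse applied to the sampled values $(f(\theta_j^*))_j$, or better, construct $a$ directly from a quadrature-discretized version of the integral representation $f(\eta) = \fint \sigma_k(\theta\cdot\eta)\psi(\theta)\,d\theta$ and then correct the quadrature error.

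\textbf{Key steps, in order.} First, I would reduce to the case where $f$ has no spherical harmonic components of degree $\le J$ (with $J = \lfloor C_1 h^{-1}\rfloor$ from Lemma~\ref{lem:quadrature}): the low-degree part is a polynomial of bounded degree and can be reproduced — or approximated to the required order — by a fixed-size piece of the network using the linear independence of the $(\vartheta\cdot\tilde x)^k$ type functions, contributing only lower-order terms. Second, for the high-degree part, I would use the quadrature rule of Lemma~\ref{lem:quadrature}: the identity $\fint_{\SS^d} p(\eta)q(\eta)\,d\eta = \sum_j \tau_j p(\theta_j^*)q(\theta_j^*)$ for $p,q \in \mathbb{P}_J$ lets one discretize inner products exactly up to degree $J$. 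Third — the analytic heart — I would expand $f$ and each $\sigma_k(\theta_j^*\cdot\circ)$ in harmonics and write the residual $\|f - \sum_j a_j \sigma_k(\theta_j^*\cdot\circ)\|_{\mathcal{H}^\ss}^2$ as a sum over $m$ of $(m^{2\ss}+1)$ times $\|\Pi_m(\cdot)\|^2$. Choosing $a_j = \tau_j \psi(\theta_j^*)$ for a suitable $\psi \in \mathcal{L}^2$ representing $f$, the error in degree $m$ is controlled by how far the quadrature is from exact at that degree, which is where the decay estimates on $\widehat{\sigma_k}(m)^2 m^{2\rr}$ (Lemma~\ref{lem:Bach}) and the matrix norm bounds on the Ces\`aro sums $P_\alpha(m)$ (Lemma~\ref{lem:property_p_nr}(c)) enter. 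Fourth, I would sum the per-degree errors: the summability of $(-1)^\beta(\Delta^\beta\xi)(m) \lesssim m^{2\rr - (d+2k+1) - \beta}$ with $\beta = \alpha = \lceil (d+2)/2\rceil$ is exactly tuned so that, after an Abel summation by parts $\alpha$ times, the series converges and yields the bound $h^{\rr-\ss}\|f\|_{\mathcal{H}^\rr}$. Fifth, the coefficient bound $\|a\|_2 \lesssim h^{-(2k+1-2\rr)/2}\|f\|_{\mathcal{H}^\rr}$ falls out of $\|a\|_2^2 = \sum_j \tau_j^2 \psi(\theta_j^*)^2 \lesssim h^d \sum_j \tau_j \psi(\theta_j^*)^2 \simeq h^d \|\psi\|_{\mathcal{L}^2}^2$, combined with $\|\psi\|_{\mathcal{L}^2} \lesssim \|f\|_{\mathcal{H}^{(d+2k+1)/2}}$ when $\rr = (d+2k+1)/2$, and an interpolation/rescaling argument for general $\rr$. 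Finally, the well-distributed case follows by plugging $h \simeq n^{-1/d}$, and the last displayed inequality is the special case $\rr = (d+2k+1)/2$, $\ss = 0$.

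\textbf{Main obstacle.} The hard part will be Step~3–4: controlling the high-degree residual uniformly in $n$ without any lower bound on the separation $\underline h$ of the points — i.e., establishing that the quadrature coefficients $a_j = \tau_j\psi(\theta_j^*)$ genuinely reproduce $f$'s harmonic components up to a clean $O(h^{\rr-\ss})$ error. The Ces\`aro-sum machinery is needed precisely because the raw Legendre kernel sums $\sum_m \widehat{\sigma_k}(m) p_m(\theta_i^*\cdot\theta_j^*)$ do not converge; one must Abel-sum $\alpha$ times, trading each difference $\Delta$ acting on $\xi(m) = \widehat{\sigma_k}(m)^2 m^{2\rr}$ for an extra factor of $m^{-1}$ decay (Lemma~\ref{lem:Bach}) against the growth $\|P_\alpha(m)\|_2 \lesssim m^{\alpha+d-1}(h/\underline h)^{(d-1)/2+\alpha}$ (Lemma~\ref{lem:property_p_nr}(c)), and verify the exponents balance so the $(h/\underline h)$ factor is absorbed — this is a delicate bookkeeping that is the technical core of the whole paper. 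A secondary subtlety is handling the parity condition \eqref{eqn:f_evenodd_relu} carefully: one must check that restricting to the correct even/odd subspace is compatible with the quadrature and with the low-degree polynomial correction, and that $\sigma_k$'s low-order Legendre block (degrees $0,\dots,k$, which are \emph{not} in the "$m-k$ odd" pattern) is handled separately and consistently.
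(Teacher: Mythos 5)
Your overall toolkit matches the paper's: Legendre expansion of $\sigma_k$, the scattered-point quadrature of Lemma~\ref{lem:quadrature}, summation by parts $\alpha=\lceil(d+2)/2\rceil$ times trading $\Delta$ on $\xi$ against the growth of $P_\alpha(m)$ (Lemmas~\ref{lem:Bach} and~\ref{lem:property_p_nr}(c)), and the $\|a\|_2$ bound via $\tau_j\lesssim h^d$. However, your Step~1 contains a genuine misconception that breaks the logical structure.

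You propose to ``reduce to the case where $f$ has no spherical harmonic components of degree $\le J$,'' asserting that the low-degree part is ``a polynomial of bounded degree'' reproducible ``by a fixed-size piece of the network.'' This is false: $J=\lfloor C_1 h^{-1}\rfloor$ grows like $n^{1/d}$ in the well-distributed case, so the low-degree block has dimension $\simeq J^d\simeq n$ --- it is essentially all of $\mathbb{P}_J(\SS^d)$ and requires all $n$ neurons, not a fixed-size piece. Worse, the reduction is not actually a reduction: when you choose coefficients $a_j$ so that $\sum_j a_j\sigma_k(\theta_j^*\cdot\circ)$ reproduces the low-degree polynomial, the approximant acquires a \emph{high-degree tail} coming from the Legendre components $\widehat{\sigma_k}(m)p_m$ with $m>J$, and controlling that tail is exactly the hard $I_2$ estimate. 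The paper does not ``reduce'' at all; it chooses $a_j=\tau_j\sum_{m'\in E_{\sigma_k,J}}\widehat{\sigma_k}(m')^{-1}(\Pi_{m'}f)(\theta_j^*)$ so that $\widehat{f_n}(m,\ell)=\widehat f(m,\ell)$ for all $m\le J$ (no error there), and then bounds the two residual tails $I_1=\sum_{m>J}(m^{2\ss}+1)\|\Pi_m f\|^2$ (trivial) and $I_2=\sum_{m>J}(m^{2\ss}+1)\|\Pi_m f_n\|^2$ (the Ces\`aro machinery). Your Steps~3--4 implicitly do this, so the plan is internally inconsistent with Step~1.

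A secondary, fixable issue: you define $a_j=\tau_j\psi(\theta_j^*)$ with $\psi\in\mathcal{L}^2(\SS^d)$, but $\mathcal{L}^2$ functions have no pointwise values. The paper sidesteps this by using the \emph{truncated} quantity $\sum_{m\in E_{\sigma_k,J}}\widehat{\sigma_k}(m)^{-1}(\Pi_m f)(\theta_j^*)$, a degree-$J$ polynomial with well-defined point values; you would need the same fix. One more remark: the paper first passes to a quasi-uniform subset $\{\vartheta_j^*\}_{j=1}^{\tilde n}$ so that $h\lesssim\underline h$, which is what makes the $(h/\underline h)^{(d-1)/2+\alpha}$ factor from Lemma~\ref{lem:property_p_nr}(c) harmless. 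Your proposal does not mention this, and without it the final bound is not clean.
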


\begin{proof}

To start up, there exists a subset of $\{\vartheta_j^*\}_{j=1}^{\tilde n}\subset\{\theta_j^*\}_{j=1}^n$ with $\tilde n\simeq n$ such that
    \begin{equation}\label{eqn:quasi-uniform}
        \mal_{\eta\in\SS^d}\min\limits_{1\leq j\leq \tilde n}\rho(\eta,\vartheta_j^*)\lesssim\min\limits_{i\neq j}\rho(\vartheta_i^*,\vartheta_j^*).
    \end{equation}
    Thus it suffices to prove the theorem for $\{\theta_j^*\}_{j=1}^n$ satisfying $h\lesssim\underline{h}$.

For each $j=1,\dots,n$, we can write the expansion \eqref{eqn:Legendre_expansion_ReLUk} as
\begin{equation}
    \sigma_k(\theta_j^*\cdot \eta)=\sul_{m\in E_{\sigma_k}}\widehat{\sigma_k}(m)p_m(\theta_j^*\cdot \eta)=\sul_{m\in E_{\sigma_k}}\widehat{\sigma_k}(m)\sul_{\ell=1}^{N(m)}Y_{m,\ell}(\theta_j^*)Y_{m,\ell}(\eta)\qquad a.e. \quad \eta\in\SS^d.
\end{equation}
Then with $a=(a_1,\dots,a_n)^\top$ which is to be determined later,
\begin{equation}\label{eqn:f_n_expansion}
    f_n(\eta)=\sul_{j=1}^na_j\sigma_k(\theta_j^*\cdot \eta)=\sul_{m\in E_{\sigma_k}}\sul_{\ell=1}^{N(m)}\widehat{f_n}(m,\ell)Y_{m,\ell}(\eta),\qquad a.e. \quad \eta\in\SS^d.
\end{equation}
where
\begin{equation}\label{eqn:hat_fn_explicit}
    \widehat{f_n}(m,\ell)=\widehat{\sigma_k}(m)\sul_{j=1}^na_jY_{m,\ell}(\theta_j^*).
\end{equation}

On the other hand,
since the Legendre polynomials satisfy
\begin{equation}\label{eqn:legend_even}
    p_m(-t)=(-1)^mp_m(t),\qquad t\in[-1,1],~m\in\NN,
\end{equation}
if $m-k$ is even,
\begin{equation*}
    \begin{split}
        (\Pi_mf)(\theta)=&\fint_{\SS^d}f(-\eta)p_m(-\eta\cdot \theta)d\eta=\fint_{\SS^d}(-1)^{k+1}f(\eta)(-1)^mp_m(\eta\cdot \theta)d\eta\\
        =&-\fint_{\SS^d}f(\eta)p_m(\eta\cdot \theta)d\eta=-(\Pi_mf)(\theta),\qquad \theta\in\SS^d,
    \end{split}
\end{equation*}
which implies $\Pi_mf\equiv0$ and $f\in\bigoplus_{m\in E_{\sigma_k}}\YY_m$.

So we can write the expansion
$$f(\eta)=\sul_{m\in E_{\sigma_k}}\sul_{\ell=1}^{N(m)}\widehat f(m,\ell)Y_{m,\ell}(\eta),\qquad a.e. \quad \eta\in\SS^d.$$

By Lemma \ref{lem:quadrature}, 
there exists nonnegative numbers
\begin{equation}\label{eqn:nu_bound}
    \tau_1,\dots,\tau_n\lesssim h^d
\end{equation}
such that
\begin{equation}\label{eqn:ideal_aj_tofind}
    \begin{split}
    \widehat f(m,\ell)=&\fint_{\SS^d}f(\eta)Y_{m,\ell}(\eta)d\eta=\fint_{\SS^d}(\Pi_{m}f)(\eta)Y_{m,\ell}(\eta)d\eta\\
    =&\fint_{\SS^d}\lrt{\sul_{m'\in E_{\sigma_k,J}}\widehat{\sigma_k}(m')^{-1}(\Pi_{m'}f)(\eta)}\widehat{\sigma_k}(m)Y_{m,\ell}(\eta)d\eta\\
    =&\sul_{j=1}^n\tau_j\lrt{\sul_{m'\in E_{\sigma_k,J}}\widehat{\sigma_k}(m')^{-1}(\Pi_{m'}f)(\theta_j^*)}\widehat{\sigma_k}(m)Y_{m,\ell}(\theta_j^*),\qquad m\leq J.
    \end{split}
\end{equation}
where $J$ is the integer in Lemma \ref{lem:quadrature}, and
\begin{equation*}
    E_{\sigma_k,J}:=\{m\in E_{\sigma_k}:~m\leq J\}.
\end{equation*}
Comparing \eqref{eqn:hat_fn_explicit} and \eqref{eqn:ideal_aj_tofind}, by taking
\begin{equation}
    a_j=\tau_j\lrt{\sul_{m\in E_{\sigma_k,J}}\widehat{\sigma_k}(m)^{-1}(\Pi_mf)(\theta_j^*)},\qquad j=1,\dots,n,
\end{equation}
we have
\begin{equation}\label{eqn:trunc_equal}
    \widehat {f_n}(m,\ell)=\widehat f(m,\ell),\qquad m\leq J.
\end{equation}

Given Lemma \eqref{eqn:trunc_equal}, in terms of \eqref{eqn:Sob_norm_Parseval} we can write
\begin{equation}
    \begin{split}
        \lt\|f-f_n\rt\|_{\mathcal{H}^\ss(\SS^d)}^2
        =&\sul_{m=0}^\infty\sul_{\ell=1}^{N(m)}\lt(\widehat f(m,\ell)-\widehat{f_n}(m,\ell)\rt)^2(m^{2\ss}+1)\\
        =&\sul_{m=J+1}^\infty\sul_{\ell=1}^{N(m)}\lt(\widehat f(m,\ell)-\widehat{f_n}(m,\ell)\rt)^2(m^{2\ss}+1)\\
        \leq&4\sul_{m=J+1}^\infty\sul_{\ell=1}^{N(m)}\widehat f(m,\ell)^2m^{2\ss}+4\sul_{m=J+1}^\infty\sul_{\ell=1}^{N(m)}\widehat{f_n}(m,\ell)^2m^{2\ss}.
    \end{split}
\end{equation}

Write
\begin{equation*}
    \begin{split}
        I_1=\sul_{m=J+1}^\infty\sul_{\ell=1}^{N(m)}\widehat f(m,\ell)^2m^{2\ss},\qquad
        I_2=\sul_{m=J+1}^\infty\sul_{\ell=1}^{N(m)}\widehat {f_n}(m,\ell)^2m^{2\ss},
    \end{split}
\end{equation*}
then
\begin{equation}\label{eqn:est_I1}
    I_1\leq\sul_{m=J+1}^\infty\sul_{\ell=1}^{N(m)}\widehat f(m,\ell)^2m^{2\rr}J^{2\ss-2\rr}\leq J^{2\ss-2\rr}\|f\|_{\mH}^2\simeq h^{2\rr-2\ss}\|f\|_{\mH}^2.
\end{equation}
Let $ P(m)$ be as in \eqref{eqn:Pn0=YY},
\begin{equation}\label{eqn:est_I2}
    \begin{split}
        I_2=&\sul_{m= J+1}^\infty\sul_{\ell=1}^{N(m)}\widehat{\sigma_k}(m)^2\lrt{\sul_{j=1}^na_jY_{m,\ell}(\theta_j^*)}^2m^{2\ss}=\sul_{m=J+1}^\infty\widehat{\sigma_k}(m)^2m^{2\ss}a^\top P(m)a\\
        \leq&\sul_{m=J+1}^\infty\xi(m)a^\top P(m)a,
    \end{split}
\end{equation}
where $\xi$ is the nonnegative function determined in Lemma \ref{lem:Bach}, which satisfies $$\xi(m)=\widehat{\sigma_k}(m)^2m^{2\ss},\qquad m\in E_{\sigma_k},~m\geq J+1.$$

    Summation by parts, we have
    \begin{equation*}
        \begin{split}
            &\sul_{m=J+1}^\infty\xi(m)P(m)=\sul_{m=J+1}^\infty\xi(m)\lrt{\nabla P_{J+1,1}}(m)\\
            =&\lim\limits_{m\to\infty}\xi(m+1)P_{J+1,1}(m)-\sul_{m=J+1}^\infty\lrt{\nabla\xi}(m+1)P_{J+1,1}(m)\\
            =&\lim\limits_{m\to\infty}\xi(m+1)P_{J+1,1}(m)-\sul_{m=J+1}^\infty(\Delta\xi)(m)P_{J+1,1}(m).
        \end{split}
    \end{equation*}
    where we abused the notation that $P_{J+1,1}(J)=\sul_{\nu=J+1}^{J}P_{\nu,0}=0$. Similarly, for $\beta=1,\dots,\alpha-1$,
    \begin{equation*}
        \begin{split}
            \sul_{m=J+1}^\infty(-1)^\beta(\Delta^\beta\xi)(m)P_\beta(m)=&\lim\limits_{m\to\infty}(-1)^\beta(\Delta^\beta\xi)(m+1)P_{J+1,\beta+1}(m)\\
            &+\sul_{m=J+1}^\infty(-1)^{\beta+1}(\Delta^{\beta+1}\xi)(m)P_{J+1,\beta+1}(m).
        \end{split}
    \end{equation*}
    By Lemma \ref{lem:Bach},
    \begin{equation*}
        \begin{split}
            \sul_{m=J+1}^\infty(-1)^\beta(\Delta^\beta\xi)(m)P_\beta(m)=&\sul_{m=J+1}^\infty(-1)^{\beta+1}(\Delta^{\beta+1}\xi)(m)P_{J+1,\beta+1}(m)\\
            \leq&\sul_{m=J+1}^\infty(-1)^{\beta+1}(\Delta^{\beta+1}\xi)(m)P_{\beta+1}(m).
        \end{split}
    \end{equation*}
    Therefore,
    \begin{equation}\label{eqn:I2_est_2}
        I_2\leq\sul_{m=J+1}^\infty\xi(m)a^\top P(m)a\leq\sul_{m=J+1}^\infty(-1)^\alpha(\Delta^\alpha\xi)(m)a^\top P_\alpha(m)a.
    \end{equation}
Together with \eqref{eqn:def_theta} and \eqref{eqn:est_Pns_2norm},
    \begin{equation}\label{eqn:fM-fMm_a_norm}
        \begin{split}
            I_2\lesssim\lrt{\frac{h}{\underline{h}}}^{\frac{d-1}{2}+\alpha}\|a\|_2^2\sul_{m=J+1}^\infty m^{2\ss-2k-2}\simeq\lrt{\frac{h}{\underline{h}}}^{\frac{d-1}{2}+\alpha}\|a\|_2^2h^{2k+1-2\ss},            
        \end{split}
    \end{equation}

Thus
    \begin{equation}\label{eqn:f-Phim_1}
    \begin{split}
        \lt\|f-f_n\rt\|_{\mathcal{H}^\ss(\SS^d)}^2\lesssim h^{2\rr-2\ss}\|f\|_{\mH}^2+\lrt{\frac{h}{\underline{h}}}^{\frac{d-1}{2}+\alpha}\|a\|_2^2h^{2k+1-2\ss}.
    \end{split}
    \end{equation}

    For the norm $\|a\|_2$, we apply \eqref{eqn:nu_bound} and the quadrature formula \eqref{eqn:quadrature} to conclude
    \begin{equation}\label{eqn:a_2_norm}
        \begin{split}
        \|a\|_2^2=&\sul_{j=1}^n\tau_j^2\lrt{\sul_{m\in E_{\sigma_k,J}}\widehat{\sigma_k}(m)^{-1}(\Pi_mf)(\theta_j^*)}^2
        \lesssim h^d\sul_{j=1}^n\tau_j\lrt{\sul_{m\in E_{\sigma_k,J}}\widehat{\sigma_k}(m)^{-1}(\Pi_mf)(\theta_j^*)}^2\\
        =&h^d\fint_{\SS^d}\lrt{\sul_{m\in E_{\sigma_k,J}}\widehat{\sigma_k}(m)^{-1}\sul_{\ell=1}^{N(m)}\widehat f(m,\ell)Y_{m,\ell}(\eta)}^2d\eta=h^d\sul_{m\in E_{\sigma_k,J}}\widehat{\sigma_k}(m)^{-2}\sul_{\ell=1}^{N(m)}\widehat f(m,\ell)^2\\
        \leq&
        h^{d}\mal_{m\in E_{\sigma_k,J}}\lrt{\frac{m^{d+2k+1}}{m^{2\rr}+1}}\sul_{m\in E_{\sigma_k,J}}\sul_{\ell=1}^{N(m)}(m^{2\rr}+1)\widehat f(m,\ell)^2\\
        \lesssim&h^{2\rr-2k-1}\lt\|f\rt\|_{\mathcal{H}^\rr(\SS^d)}^2.
        \end{split}
    \end{equation}
Substituting this into \eqref{eqn:fM-fMm_a_norm},
\begin{equation*}
    \lrt{\frac{h}{\underline{h}}}^{\frac{d-1}{2}+\alpha}\|a\|_2^2h^{2k+1-2\ss}\lesssim\lrt{\frac{h}{\underline{h}}}^{\frac{d-1}{2}+\alpha}h^{2(\rr-\ss)}\lt\|f\rt\|_{\mathcal{H}^\rr(\SS^d)}^2\lesssim h^{2(\rr-\ss)}\lt\|f\rt\|_{\mathcal{H}^\rr(\SS^d)}^2,
\end{equation*}
where the second inequality follows by \eqref{eqn:quasi-uniform}. Together with \eqref{eqn:f-Phim_1}, we get \eqref{eqn:rate_nonuniform} and complete the proof.

\end{proof}

\begin{remark}\label{rem:even_odd_necess}
    We remark here that the condition \eqref{eqn:f_evenodd_relu} is necessary. This is because any $\sigma_k(\theta^*\cdot\eta)$ is essentially an even (odd) function when $k$ is odd (even) up to a polynomial. It suffices to notice
    \begin{equation*}
        \sigma_k(\theta^*\cdot \eta)=(-1)^{k+1}\sigma_k(-\theta^*\cdot \eta)+(\theta^*\cdot \eta)^k.
    \end{equation*}
\end{remark}

\subsection{Integral representation for Sobolev and Barron functions on spheres}
In this subsection, we provide a new characterization of $\mathcal{H}^{\frac{d+2k+1}{2}}(\SS^d)$. While the operator
\begin{equation}
    \mathcal{G}:~\mu\mapsto\int_{\SS^d}\sigma_k(\theta\cdot\eta)d\mu(\theta),\qquad\mu\in\mathcal{M}(\SS^d)
\end{equation}
characterize the Barron space $\mathcal{B}^k(\SS^d)$ given by the norm (in comparison with \eqref{eqn:barron_norm_int})
\begin{equation}
    \|f\|_{\mathcal{B}^k(\SS^d)}=\inf\limits_{\mu\in\mathcal{M}(\SS^d)}\Bigl\{|\mu|(\SS^d):~f=\mathcal{G}(\mu)\Bigr\}
\end{equation}
From Remark \ref{rem:even_odd_necess}, up to a polynomial, a neuron $\sigma_k(\theta\cdot\eta)$ is essentially an even/odd function on the sphere if $k$ is odd/even. So for all the spaces on the sphere, it suffices to consider their even/odd subspace.

For notation simplicity, in this section, we consider $k$ to be fixed. We write
\begin{equation*}
    \mathcal{M}_{*}(\SS^d):=\lt\{\mu\in\mathcal{M}(\SS^d):~\mu(A)=(-1)^{k+1}\mu(-A),\quad A\subset\SS^d\rt\},
\end{equation*}
and for any function space $\mathcal{S}(\SS^d)$,
\begin{equation*}
    \mathcal{S}_{*}(\SS^d):=\lt\{f\in\mathcal{S}(\SS^d):~f(\eta)=(-1)^{k+1}f(-\eta),\quad \eta\in\SS^d\rt\}.
\end{equation*}
Namely, $\mathcal{M}_{*}(\SS^d)$ (or $\mathcal{S}_{*}(\SS^d)$) consists of even measures (or functions) if $k$ is odd and odd (even) measures (or functions) if $k$ is even.


\begin{theorem}\label{thm:sph_int_rep_sob}
For $d\in\NN$, $k\in\NN_0$, the map $\mathcal{G}:~\mathcal{M}_{*}(\SS^d)\to\mathcal{B}^{k}_{*}(\SS^d)$ is an isomorphism, and satisfies
\begin{equation}\label{eqn:mu_Gmu_equiv}
    |\mu|(\SS^d)=\|\mathcal{G}(\mu)\|_{\mathcal{B}^k(\SS^d)}.
\end{equation}
Furthermore, $\mathcal{G}: ~\mathcal{L}_{*}^2(\SS^d)\to\mathcal{H}^{\frac{d+2k+1}{2}}_*(\SS^d)$ is an isomorphism on subspaces with
\begin{equation}
    \|\psi\|_{\mathcal{L}^2(\SS^d)}\simeq\|\mathcal{G}(\psi)\|_{\mathcal{H}^{\frac{d+2k+1}{2}}(\SS^d)}.
\end{equation}

\end{theorem}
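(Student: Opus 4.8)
The plan is to work entirely on the spherical-harmonic side, where the operator $\mathcal{G}$ becomes diagonal. Recall from \eqref{eqn:Legendre_expansion_ReLUk} and \eqref{eqn:sum_Y_nl} that $\sigma_k(\theta\cdot\eta)=\sum_{m\in E_{\sigma_k}}\widehat{\sigma_k}(m)\sum_{\ell=1}^{N(m)}Y_{m,\ell}(\theta)Y_{m,\ell}(\eta)$, so for $\psi\in\mathcal{L}^2(\SS^d)$ with expansion $\psi=\sum_{m,\ell}\widehat\psi(m,\ell)Y_{m,\ell}$ one gets $\mathcal{G}(\psi)=\sum_{m\in E_{\sigma_k}}\widehat{\sigma_k}(m)\sum_{\ell=1}^{N(m)}\widehat\psi(m,\ell)Y_{m,\ell}$; that is, $\mathcal{G}$ multiplies the $m$-th harmonic block by $\widehat{\sigma_k}(m)$. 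The restriction to the even/odd subspace is exactly what makes $\mathcal{G}$ injective: by \eqref{eqn:hat_sigma_large} the support $E_{\sigma_k}$ is precisely the set of $m$ with $m-k$ odd (together with $0,\dots,k$), and Remark \ref{rem:even_odd_necess} shows functions (or measures) in $\mathcal{M}_*,\mathcal{L}^2_*$ are supported on exactly these harmonic degrees. So on $\mathcal{L}^2_*$, $\mathcal{G}$ is a diagonal operator with nonzero entries $\widehat{\sigma_k}(m)$ on every block that appears.

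First I would establish the measure statement \eqref{eqn:mu_Gmu_equiv}. The inequality $\|\mathcal{G}(\mu)\|_{\mathcal{B}^k(\SS^d)}\le|\mu|(\SS^d)$ is immediate from the definition of the Barron norm, since $\mathcal{G}(\mu)=\int\sigma_k(\theta\cdot\,\cdot)\,d\mu(\theta)$ exhibits $f/|\mu|(\SS^d)$ in the closed convex hull of $\mathbb{D}_{\sigma_k}$. For the reverse, I would invoke the characterization of $\mathcal{B}^k(\SS^d)$ via \eqref{eqn:barron_norm_int} (the spherical analogue stated just before the theorem): any $f\in\mathcal{B}^k_*(\SS^d)$ is $\mathcal{G}(\nu)$ for some $\nu$ with $|\nu|(\SS^d)\simeq\|f\|_{\mathcal{B}^k}$, and symmetrizing $\nu$ (replace $\nu$ by $\tfrac12(\nu+(-1)^{k+1}\nu\circ(-\mathrm{id}))$) keeps $\mathcal{G}(\nu)=f$ unchanged while not increasing total variation, so $\nu\in\mathcal{M}_*$. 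Injectivity of $\mathcal{G}$ on $\mathcal{M}_*$ follows because $\mathcal{G}(\mu)=0$ forces $\widehat{\mu}(m,\ell)=0$ for all $m\in E_{\sigma_k}$ (as $\widehat{\sigma_k}(m)\ne0$ there), and for $m\notin E_{\sigma_k}$ the parity of $\mu$ already forces those coefficients to vanish; hence $\mu=0$. This gives the isomorphism and the norm identity.

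Next the $\mathcal{L}^2$ statement. Using Parseval on $\SS^d$ together with the Sobolev norm formula \eqref{eqn:Sob_norm_Parseval} with $\rr=\tfrac{d+2k+1}{2}$,
\begin{equation*}
\|\mathcal{G}(\psi)\|_{\mathcal{H}^{\frac{d+2k+1}{2}}(\SS^d)}^2=\sum_{m\in E_{\sigma_k}}(m^{d+2k+1}+1)\,\widehat{\sigma_k}(m)^2\sum_{\ell=1}^{N(m)}\widehat\psi(m,\ell)^2,
\end{equation*}
so the claim $\|\psi\|_{\mathcal{L}^2}\simeq\|\mathcal{G}(\psi)\|_{\mathcal{H}^{(d+2k+1)/2}}$ is equivalent to the two-sided bound $(m^{d+2k+1}+1)\,\widehat{\sigma_k}(m)^2\simeq1$ uniformly over $m\in E_{\sigma_k}$. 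This is exactly what Lemma \ref{lem:Bach} delivers: taking $\rr=\tfrac{d+2k+1}{2}$ in \eqref{eqn:def_theta} gives $\xi(m)=\widehat{\sigma_k}(m)^2 m^{d+2k+1}$ with $0\le\xi(m)\lesssim m^0=1$ (the upper bound), and one needs a matching lower bound $\xi(m)\gtrsim1$. The lower bound is not literally in \eqref{eqn:theta_beta} (which is stated for $\rr<\tfrac{d+2k+1}{2}$), so I would extract it directly from the digamma computation in the proof of Lemma \ref{lem:Bach}: there $\lim_{t\to\infty}\xi(t)=\bigl(\tfrac{\omega_{d-1}}{\omega_d}\tfrac{k!\Gamma(d/2)}{2^{k+1}\sqrt\pi}\bigr)^2>0$ since $\tfrac{\Gamma((t-k)/2)}{\Gamma((t+d+k+1)/2)}\sim t^{-(d+2k+1)/2}$ by Stirling; combined with continuity and positivity of $\xi$ on the discrete set $E_{\sigma_k}$, this yields $\xi(m)\simeq1$. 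Then surjectivity of $\mathcal{G}:\mathcal{L}^2_*\to\mathcal{H}^{(d+2k+1)/2}_*$ holds because given $f\in\mathcal{H}^{(d+2k+1)/2}_*$ one sets $\widehat\psi(m,\ell):=\widehat f(m,\ell)/\widehat{\sigma_k}(m)$, and the bound $\widehat{\sigma_k}(m)^{-2}\simeq m^{d+2k+1}+1$ shows $\psi\in\mathcal{L}^2$; injectivity is as before.

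The main obstacle is the uniform two-sided estimate $\widehat{\sigma_k}(m)^2\simeq(m^{d+2k+1}+1)^{-1}$ on $E_{\sigma_k}$, i.e.\ pinning down the lower bound at the endpoint exponent $\rr=\tfrac{d+2k+1}{2}$, which sits just outside the regime where Lemma \ref{lem:Bach} states a clean $\simeq$. Everything else—diagonality of $\mathcal{G}$, the parity bookkeeping identifying $E_{\sigma_k}$ with the support of even/odd functions, and the symmetrization step for measures—is routine once that asymptotic is secured. I would therefore either strengthen Lemma \ref{lem:Bach} to include the borderline case or, more cheaply, quote the known closed-form asymptotics for the Legendre coefficients of $\sigma_k$ from \cite{bach2017breaking} (Appendix D) which give $\widehat{\sigma_k}(m)\asymp m^{-(d+2k+1)/2}$ directly for $m\in E_{\sigma_k}$, $m\to\infty$, with the finitely many small $m$ handled by nonvanishing.
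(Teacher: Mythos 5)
Your proposal is correct and follows essentially the same spectral route as the paper: expand everything in spherical harmonics, observe that $\mathcal{G}$ acts diagonally with multiplier $\widehat{\sigma_k}(m)$ on the $m$-th block, restrict to the parity-compatible subspaces where those multipliers are nonzero, and reduce the $\mathcal{L}^2\leftrightarrow\mathcal{H}^{(d+2k+1)/2}$ equivalence to the two-sided bound $\widehat{\sigma_k}(m)^2(m^{d+2k+1}+1)\simeq1$ on $E_{\sigma_k}$.

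You do improve on the paper's exposition in two places, and both observations are worth recording. First, for \eqref{eqn:mu_Gmu_equiv} the paper concludes $|\mu|(\SS^d)=\inf_{\nu\in\mathcal{M}(\SS^d)}\{|\nu|:\mathcal{G}(\nu)=\mathcal{G}(\mu)\}$ directly from bijectivity of $\mathcal{G}$ on $\mathcal{M}_*(\SS^d)$, but the infimum in \eqref{eqn:barron_norm_int} is over \emph{all} measures, not just symmetric ones; your symmetrization step $\nu\mapsto\tfrac12(\nu+(-1)^{k+1}\nu\circ(-\mathrm{id}))$, which preserves $\mathcal{G}(\nu)$ when $\mathcal{G}(\nu)\in\mathcal{B}^k_*(\SS^d)$ and does not increase total variation, is precisely the argument needed to justify that step, and the paper omits it. Second, you correctly flag that the cited conclusion $(m^{d+2k+1}+1)\widehat{\sigma_k}(m)^2\simeq1$ is \emph{not} an instance of \eqref{eqn:theta_beta}, which is stated only for $\rr<\tfrac{d+2k+1}{2}$; the upper bound comes from \eqref{eqn:def_theta} with $\beta=0$, but the matching lower bound $\xi(m)\gtrsim1$ must be pulled from inside the proof of Lemma \ref{lem:Bach} (the $\beta=0$ case of \eqref{eqn:lim_theta_j} gives a positive finite limit for $\xi(t)$, and positivity plus continuity on $[k+1,\infty)$, together with nonvanishing at the finitely many $m\le k$ in $E_{\sigma_k}$, then closes the argument), or equivalently from the Stirling asymptotics of \eqref{eqn:theta}. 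Your suggestion to either strengthen the lemma to cover the borderline exponent or cite the closed-form asymptotics of $\widehat{\sigma_k}(m)$ from Bach directly would make the paper's proof self-contained at this point.

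One very small correction: the constant $\lim_{t\to\infty}\xi(t)$ you wrote down drops the factor $2^{d+2k+1}$ coming from $\Gamma((t-k)/2)/\Gamma((t+d+k+1)/2)\sim(t/2)^{-(d+2k+1)/2}$, but this is immaterial since only positivity of the limit is used.
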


\begin{proof}
Consider the space $\mathcal{M}_{*}(\SS^d)$. By the earlier work (see, e.g., \cite[Lemma 3]{siegel2023characterization}),
$$\mathcal{G}:~\mathcal{M}_{*}(\SS^d)\to\mathcal{B}_{*}^k(\SS^d)$$
is surjective. To see it is an injection, let $\mu_1,\mu_2\in\mathcal{M}_{*}(\SS^d)$ be different. For the harmonic basis, we denote
\begin{equation*}
    \widehat{\mu_i}(m,\ell)=\int_{\SS^d}Y_{m,\ell}(\theta)d\mu_i(\theta),\qquad i=1,2,
\end{equation*}
then there exists some $(m',\ell')$ such that
\begin{equation}
    \widehat{\mu_1}(m',\ell')\neq\widehat{\mu_2}(m',\ell'),
\end{equation}
On the other hand, by Fubini's theorem, the $\mathcal{L}^2$-projections satisfy
\begin{equation*}
    \Pi_n(\mathcal{G}(\mu_i))(\eta)=\int_{\SS^d}\Pi_n(\sigma_k(\theta\cdot\eta))d\mu_i(\theta)=\sul_{m=0}^n\widehat{\sigma_k}(m)\sul_{\ell=1}^{N(m)}\widehat{\mu_i}(m,\ell)Y_{m,\ell}(\eta),\qquad i=1,2.
\end{equation*}
Therefore,
\begin{equation*}
    \begin{split}
        \|\mathcal{G}(\mu_1)-\mathcal{G}(\mu_2)\|_{\mathcal{L}^2(\SS^d)}^2=&\lim\limits_{n\to\infty}\left\|\Pi_n(\mathcal{G}(\mu_1))-\Pi_n(\mathcal{G}(\mu_2))\right\|_{\mathcal{L}^2(\SS^d)}^2\\
        \geq&\widehat{\sigma_k}(m')^2\left(\widehat{\mu_1}(m',\ell')-\widehat{\mu_2}(m',\ell')\right)^2>0.
    \end{split}
\end{equation*}
As $\mathcal{B}^k(\SS^d)$ is a subspace of $\mathcal{L}^2(\SS^d)$,
\begin{equation}
    \|\mathcal{G}(\mu_1)-\mathcal{G}(\mu_2)\|_{\mathcal{B}^k(\SS^d)}\gtrsim\|\mathcal{G}(\mu_1)-\mathcal{G}(\mu_2)\|_{\mathcal{L}^2(\SS^d)}>0.
\end{equation}
Hence $\mathcal{G}:~\mathcal{M}_{*}(\SS^d)\to\mathcal{B}^{k}_{*}(\SS^d)$ is an isomorphism. Then the definition of $\|\cdot\|_{\mathcal{B}^k(\SS^d)}$ gives
\begin{equation}
    \|f\|_{\mathcal{B}^k(\SS^d)}=\inf\limits_{\mu\in\mathcal{M}(\SS^d)}\left\{|\mu|(\SS^d):~f(x)=\int_{\SS^d}\sigma_k(\theta\cdot\eta)d\mu(\theta)\right\}=|\mathcal{G}^{-1}(\mu)|(\SS^d),\quad f\in\mathcal{B}^k_{*}(\SS^d),
\end{equation}
which is exactly \eqref{eqn:mu_Gmu_equiv}.

Now we prove the result on $\mathcal{L}^2_{*}(\SS^d)$. It suffices to apply the orthogonality and write
\begin{equation*}
    \begin{split}
        \mathcal{G}(\psi)(\eta)=&\fint_{\SS^d}\lrt{\sul_{m\equiv k+1~\mathrm{mod}~2}\sul_{\ell=1}^{N(m)}\widehat{\psi}(m,\ell)Y_{m,\ell}(\theta)}\lrt{\sul_{m'\equiv k+1~\mathrm{mod}~2}\widehat{\sigma_k}(m')\sul_{\ell=1}^{N(m')}Y_{m',\ell}(\theta)Y_{m',\ell}(\eta)}d\theta\\
        =&\sul_{m\equiv k+1~\mathrm{mod}~2}\widehat{\sigma_k}(m)\sul_{\ell=1}^{N(m)}\widehat{\psi}(m,\ell)Y_{m,\ell}(\eta).
    \end{split}
\end{equation*}
By Lemma \ref{lem:Bach},
\begin{equation*}
    (m^{d+2k+1}+1)\widehat{\sigma_k}(m)^2\simeq1,\qquad m\equiv k+1~\mathrm{mod}~2
\end{equation*}
then the norm $\|\mathcal{G}(\psi)\|_{\mathcal{H}^{\frac{d+2k+1}{2}}(\SS^d)}$ can be estimated by \eqref{eqn:Sob_norm_Parseval} as
\begin{equation*}
    \begin{split}
        \|\mathcal{G}(\psi)\|_{\mathcal{H}^{\frac{d+2k+1}{2}}(\SS^d)}^2=&\sul_{m\equiv k+1~\mathrm{mod}~2}(m^{d+2k+1}+1)\widehat{\sigma_k}(m)^2\sul_{\ell=1}^{N(m)}\widehat{\psi}(m,\ell)^2\\
        \simeq&\sul_{m\equiv k+1~\mathrm{mod}~2}\sul_{\ell=1}^{N(m)}\widehat{\psi}(m,\ell)^2=\|\psi\|_{\mathcal{L}^2(\SS^d)}^2.
    \end{split}
\end{equation*}
Conversely, denote $\mathcal{G}^{-1}$ by
\begin{equation*}
    \mathcal{G}^{-1}(\psi)=\sul_{m\equiv k+1~\mathrm{mod}~2}\widehat{\sigma_k}(m)^{-1}\sul_{\ell=1}^{N(m)}\widehat{\psi}(m,\ell)Y_{m,\ell}(\eta),\qquad \psi\in\mathcal{H}^{\frac{d+2k+1}{2}}_{*}(\SS^d),
\end{equation*}
then
\begin{equation*}
    \begin{split}
        \|\mathcal{G}^{-1}(\psi)\|_{\mathcal{L}^2(\SS^d)}^2=&\sul_{m\equiv k+1~\mathrm{mod}~2}\widehat{\sigma_k}(m)^{-2}\sul_{\ell=1}^{N(m)}\widehat{\psi}(m,\ell)^2\\
        \simeq&\sul_{m\equiv k+1~\mathrm{mod}~2}(m^{d+2k+1}+1)\sul_{\ell=1}^{N(m)}\widehat{\psi}(m,\ell)^2
        =\|\psi\|_{\mathcal{H}^{\frac{d+2k+1}{2}}(\SS^d)}^2.
    \end{split}
\end{equation*}
Finally, it is straightforward to check $\mathcal{G}^{-1}$ is the inverse of $\mathcal{G}$.

\end{proof}

With this theorem, the difference of the Barron spaces and Sobolev spaces can be further characterized as
\begin{equation}\label{eqn:chara_space_sphere}
    \begin{split}
        &\mathcal{B}_{*}^k(\SS^d)/\ \mathcal{H}^{\frac{d+2k+1}{2}}_{*}(\SS^d)\cong\mathcal{M}_{*}(\SS^d)/\ \mathcal{L}_{*}^2(\SS^d)\\
        \cong&\mathcal{M}^{\perp}_{*}(\SS^d)\oplus\lrt{\mathcal{L}^1_{*}(\SS^d)/\ \mathcal{L}^2_{*}(\SS^d)},
    \end{split}
\end{equation}
where $\mathcal{M}^{\perp}_{*}(\SS^d)$ is the space of discrete measures, and $\lrt{\mathcal{L}^1_{*}(\SS^d)/\ \mathcal{L}^2_{*}(\SS^d)}$ can be characterized by the norm
\begin{equation*}
    \|[f]\|_{\lrt{\mathcal{L}^1_{*}(\SS^d)/\ \mathcal{L}^2_{*}(\SS^d)}}=\inf\limits_{g\in\mathcal{L}^2_{*}(\SS^d)}\|f-g\|_{\mathcal{L}^1(\SS^d)}
\end{equation*}
for any coset $[f]$ represented by some $f\in\mathcal{L}^1_{*}(\SS^d)$.

\section{Proof of main results on general domains}\label{sec:proof_ball}

Building on Theorem \ref{thm:appr_rate_sph}-\ref{thm:sph_int_rep_sob}, we now establish their analogues on the ball. The proof relies on an operator, originally introduced in \cite{bach2017breaking}, that maps ReLU$^k$ functions defined on $\mathbb{S}^d$ to ReLU$^k$ functions on $\mathbb{B}^d$. We note that the existence of such an operator relies on the homogeneity of the activation function.

While there exists a more natural homeomorphism from a cap of $\mathbb{S}^d$ to the ball $\mathbb{B}^d$, the operator defined below has the distinct advantage of preserving the structure of ReLU$^k$ functions. This property is crucial for extending the spherical result to the ball, enabling us to rigorously prove Theorem \ref{thm:appr_rate_ball}-\ref{thm:barron_sob_equ} as a direct consequence of its spherical counterpart.

In the remainder of this section, we define this operator, explore its key properties, and demonstrate how it facilitates the desired approximation result on the ball.

\begin{definition}
    Given $d\in\NN$, $k\in\NN_0$, let
    \begin{equation*}
        G:=\left\{\eta\in\SS^d:~\eta_{d+1}\geq\frac{1}{\sqrt{2}}\right\}.
    \end{equation*}    
    Define operator $S_k:\ \mathcal{L}^2(G)\to \mathcal{L}^2(\BB^d)$ by
\begin{equation*}
    (S_kg)(x):=|\tilde x|^kg\left(\frac{\tilde x}{|\tilde x|}\right)
    ,\quad x\in\BB^d
\end{equation*}
and operator $T_k:\ \mathcal{L}^2(\BB^d)\to \mathcal{L}^2(G)$ by
\begin{equation*}
    T_kf(\eta)=\eta_{d+1}^kf\lrt{\frac{\bar\eta}{\eta_{d+1}}},\qquad \eta\in G.
\end{equation*}
where $\tilde x$ has been defined earlier as $\displaystyle\tilde x=\binom{x}{1}$, and $\bar\eta=(\eta_1,\dots,\eta_d)^\top$.
\end{definition}
Following the idea in \cite{bach2017breaking} and \cite{yang2024optimal}, it is straightforward to verify that
 \begin{equation}
    S_kT_k=\mathrm{id}_{\mathcal{L}^2(\BB^d)},\qquad T_kS_k=\mathrm{id}_{\mathcal{L}^2(G)}
\end{equation}
and
\begin{equation}
\begin{split}
    &(S_k\sigma_k(\theta\cdot\circ))(x)=\sigma_k(\theta\cdot\tilde x),\qquad x\in\BB^d,\\
    &(T_k\sigma_k(\theta\cdot\tilde\circ))(\eta)=\sigma_k(\theta\cdot\eta),\qquad \eta\in G.
\end{split}
\end{equation}
    \begin{lemma}\label{lem:ball_sphere}
    For any $\rr\ge 0$,
    \begin{equation}\label{eqn:equiv_Sk}
        \|g\|_{\mathcal{H}^\rr( G)}\simeq\|S_kg\|_{\mathcal{H}^\rr(\BB^d)}
    \end{equation}
 \end{lemma}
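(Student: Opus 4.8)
The plan is to recognize $S_k$ as a multiplication operator composed with a change of variables by a smooth, non-degenerate diffeomorphism, and then to invoke the standard invariance of integer-order Sobolev spaces under such operations, followed by interpolation for non-integer $\rr$. First I would set up the geometry: the radial projection $\Phi:\BB^d\to G$, $\Phi(x)=\tilde x/|\tilde x|=(x,1)/\sqrt{1+|x|^2}$, is a bijection onto $G$ with inverse $\Phi^{-1}(\eta)=\bar\eta/\eta_{d+1}$, precisely because on $G$ one has $\eta_{d+1}\in[\tfrac{1}{\sqrt2},1]$, so that $\eta_{d+1}$ and $1+|\Phi^{-1}(\eta)|^2=\eta_{d+1}^{-2}$ are bounded above and away from $0$. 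Consequently $\Phi$ and $\Phi^{-1}$ are restrictions of maps that are $C^\infty$ with all derivatives bounded on the relevant closed sets, and (reading $G$ through the single graph chart $\bar\eta\mapsto(\bar\eta,\sqrt{1-|\bar\eta|^2})$ over $\{|\bar\eta|\le\tfrac{1}{\sqrt2}\}$, which parametrizes all of $G$) the Jacobian of $\Phi$ is bounded above and away from $0$. Writing $m(x):=(1+|x|^2)^{k/2}$, which is smooth with $1\le m\le 2^{k/2}$ and all derivatives bounded on $\BB^d$, one has the factorization $S_kg=m\cdot(g\circ\Phi)$, and symmetrically $T_kf=\widetilde m\cdot(f\circ\Phi^{-1})$ with $\widetilde m(\eta)=\eta_{d+1}^k$ smooth and uniformly positive on $G$.

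Next I would establish \eqref{eqn:equiv_Sk} for integer order $j\in\NN_0$. Using the graph chart to identify $\mathcal{H}^j(G)$ with $\mathcal{H}^j$ of a Euclidean ball, the chain rule together with the change-of-variables formula shows that $g\mapsto g\circ\Phi$ is a bounded isomorphism $\mathcal{H}^j(G)\to\mathcal{H}^j(\BB^d)$: both $G$ and $\BB^d$ are Lipschitz domains (hence extension domains), $\Phi$ is a diffeomorphism of manifolds-with-boundary that is regular up to the boundary, and its Jacobian is non-degenerate. The Leibniz rule shows that multiplication by $m$ is a bounded isomorphism of $\mathcal{H}^j(\BB^d)$. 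Hence $S_k:\mathcal{H}^j(G)\to\mathcal{H}^j(\BB^d)$ is bounded, and the same argument applied to $T_k=\widetilde m\cdot(\,\cdot\circ\Phi^{-1})$ gives that $T_k:\mathcal{H}^j(\BB^d)\to\mathcal{H}^j(G)$ is bounded. Since $T_kS_k=\mathrm{id}$ and $S_kT_k=\mathrm{id}$, we obtain $\|g\|_{\mathcal{H}^j(G)}=\|T_kS_kg\|_{\mathcal{H}^j(G)}\lesssim\|S_kg\|_{\mathcal{H}^j(\BB^d)}\lesssim\|g\|_{\mathcal{H}^j(G)}$, which is \eqref{eqn:equiv_Sk} for $\rr=j$.

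Finally, for non-integer $\rr$, since $\mathcal{H}^\rr$ is defined by interpolation between $\mathcal{H}^{\lfloor\rr\rfloor}$ and $\mathcal{H}^{\lceil\rr\rceil}$ on these Lipschitz domains, and since $S_k$ and $T_k$ are bounded between the corresponding integer-order pairs, interpolation yields bounded maps $S_k:\mathcal{H}^\rr(G)\to\mathcal{H}^\rr(\BB^d)$ and $T_k:\mathcal{H}^\rr(\BB^d)\to\mathcal{H}^\rr(G)$; as they remain mutually inverse, \eqref{eqn:equiv_Sk} follows for all $\rr\ge0$. The only delicate point, and the one I would state carefully, is the verification that composition with $\Phi$ preserves Sobolev regularity up to the boundary and that $\mathcal{H}^\rr(G)$ can be read off a single chart — this is routine but relies essentially on the non-degeneracy of $\Phi$ on the closed cap (equivalently, on $\eta_{d+1}$ being bounded away from $0$), which is exactly why $G$ is chosen to be a cap strictly inside a hemisphere rather than a full hemisphere.
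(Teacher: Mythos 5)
Your proposal is correct and follows essentially the same route as the paper: both factor $S_k$ as multiplication by the smooth, uniformly positive weight $(1+|x|^2)^{k/2}$ composed with pullback along the radial diffeomorphism $\Phi(x)=\tilde x/|\tilde x|$ (the paper's $\lambda_1$, with inverse $\lambda_2$), establish boundedness in both directions at integer orders via the chain and Leibniz rules together with the boundedness away from zero of $\eta_{d+1}$ on $G$, and conclude non-integer orders by interpolation. Your write-up is somewhat more explicit about reading $G$ through a single graph chart and about invoking $T_kS_k=S_kT_k=\mathrm{id}$ to get the two-sided equivalence, but these are precisely the points the paper's proof relies on implicitly.
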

\begin{proof}
   To start up, we assume $r\in\NN$. Write $S_kg$ as
    \begin{equation}
        S_kg(x)=\zeta_1(x)g(\lambda_1(x)),\qquad x\in\BB^d,
    \end{equation}
    where $\zeta_1:\BB^d\to\RR$ and $\lambda_1:\BB^d\to G$ are given as
    $$\lambda_1(x)=\frac{\tilde x}{|\tilde x|}=\frac{1}{\sqrt{|x|^2+1}}\binom{x}{1},\quad \zeta_1(x)=|\tilde x|^k=\lrt{|x|^2+1}^{k/2},\qquad x\in\BB^d.$$
    with $\lambda_1$ has its inverse
\begin{equation}\label{eqn:lam_1_inverse}
    \lambda_2(\eta)=\frac{\bar\eta}{\eta_{d+1}}=\lrt{\frac{\eta_1}{\eta_{d+1}},\dots,\frac{\eta_d}{\eta_{d+1}}}^\top,\qquad \eta\in G.
\end{equation}
Suppose $g$ be smooth enough, then the multivariate chain rule gives
\begin{equation*}
    \begin{split}
        \lt|\frac{\partial^{|\alpha|}(S_kg)}{\partial x_1^{\alpha_1}\dots\partial x_d^{\alpha_d}}(x)\rt|\lesssim&\max\limits_{|\gamma|\leq|\alpha|}\lt|\frac{\partial^{|\gamma|} g}{\partial x_1^{\gamma_1}\dots\partial x_{d+1}^{\gamma_{d+1}}}(\lambda_1(x))\rt|\|\lambda_1\|_{\mathcal{W}^{|\alpha|,\infty}(\BB^d)}\|\zeta_1\|_{\mathcal{W}^{|\alpha|,\infty}(\BB^d)}\\
        \lesssim&\max\limits_{|\gamma|\leq|\alpha|}\lt|\frac{\partial^{|\gamma|} g}{\partial x_1^{\gamma_1}\dots\partial x_{d+1}^{\gamma_{d+1}}}(\lambda_1(x))\rt|.
    \end{split}
\end{equation*}
As the trivial projection
$$\eta\mapsto\bar\eta=(\eta_1,\dots,\eta_d)^\top$$
is a homeomorphism from $ G\to\{x\in\BB^d:~|x|\leq1/\sqrt{2}\}$, $\lambda_2$ is then also a homeomorphism from $ G$ to its image $\BB^d$. Consequently,
\begin{equation}\label{eqn:S_kf_les_f}
    \|S_kg\|_{\mathcal{H}^\rr(\BB^d)}\lesssim\|g\|_{\mathcal{H}^\rr( G)}.
\end{equation}
As \eqref{eqn:S_kf_les_f} holds for all sufficiently smooth functions $g$, it holds for all $g\in\mathcal{H}^\rr( G)$.

To prove $\|g\|_{\mathcal{H}^\rr(\Omega)}\lesssim\|S_kg\|_{\mathcal{H}^\rr(\BB^d)}$, it suffices to take $f=S_kg$ and prove
\begin{equation}\label{eqn:T_kg_les_g}
    \|T_kf\|_{\mathcal{H}^\rr(\Omega)}\lesssim\|f\|_{\mathcal{H}^\rr(\BB^d)}.
\end{equation}
We write
\begin{equation}
    T_kf(\eta)=\zeta_2(\eta)f(\lambda_2(\eta)),\qquad \eta\in G,
\end{equation}
where $\lambda_2$ as in \eqref{eqn:lam_1_inverse} and $\zeta_2(\eta)=\eta_{d+1}^k$. Then \eqref{eqn:T_kg_les_g} can be proved using the same argument as above. Thus for $r\in\NN$,
\begin{equation*}
    \|g\|_{\mathcal{H}^\rr( G)}\simeq\|S_kg\|_{\mathcal{H}^\rr(\BB^d)}
\end{equation*}

For $r\notin\NN$, it suffices to apply
\begin{equation*}
    \|g\|_{\mathcal{H}^{\lfloor r\rfloor}( G)}\simeq\|S_kg\|_{\mathcal{H}^{\lfloor r\rfloor}(\BB^d)},\qquad\|g\|_{\mathcal{H}^{\lceil r\rceil}( G)}\simeq\|S_kg\|_{\mathcal{H}^{\lceil r\rceil}(\BB^d)},
\end{equation*}
then the interpolation theory of linear operators yields
\begin{equation*}
    \|S_kg\|_{\mathcal{H}^\rr(\BB^d)}\lesssim\|g\|_{\mathcal{H}^\rr(G)},\quad\|T_kf\|_{\mathcal{H}^\rr(G)}\lesssim\|f\|_{\mathcal{H}^\rr(\BB^d)}
\end{equation*}
and completes the proof of this lemma.
\end{proof}

\subsection{Proof of Theorem \ref{thm:appr_rate_ball}}
We are now in the position to provide the detailed proof for Theorem \ref{thm:appr_rate_ball}.
\begin{proof}
Without loss of generality, assume 
$\Omega\subset\BB^d$. By the classical extension theory \cite{stein1970singular}, there exists an extension $f_E$ such that $$\|f_E\|_{\mathcal{H}^\rr(\BB^d)}\leq\|f_E\|_{\mathcal{H}^\rr(\RR^d)}\lesssim\|f\|_{\mathcal{H}^\rr(\Omega)}.$$
By Lemma \ref{lem:ball_sphere}, we have
    $$\|T_kf_E\|_{\mathcal{H}^\rr( G)}\lesssim\|f_E\|_{\mathcal{H}^\rr(\BB^d)}\lesssim\|f\|_{\mathcal{H}^\rr(\Omega)}.$$
    By the extension theorems again, there exists a function $g$ on $\SS^d$, which is an extension of $T_kf_E$, such that
    \begin{equation*}
        \begin{split}
            &\|g\|_{\mathcal{H}^\rr(\SS^d)}\lesssim\|T_kf_E\|_{\mathcal{H}^\rr( G)}\lesssim\|f\|_{\mathcal{H}^\rr(\Omega)},\\
            &g(\eta)=(-1)^{k+1}g(-\eta),\qquad \eta\in\SS^d.
        \end{split}
    \end{equation*}
    Now by Theorem \ref{thm:appr_rate_sph},  there exists a vector $a\in\RR^n$ with $\|a\|_2\lesssim h^{-\frac{2k+1-2\rr}{2}}\|f\|_{\mathcal{H}^\rr(\BB^d)}$ such that
    \begin{equation}\label{eqn:est_general_nonunif}
        \begin{split}
            &\biggl\|T_kf_E-\sul_{j=1}^na_j\sigma_k(\theta_j^*\cdot\circ)\biggr\|_{\mathcal{H}^\ss( G)}\leq\Bigl\|g-\sul_{j=1}^na_j\sigma_k(\theta_j^*\cdot\circ)\Bigr\|_{\mathcal{H}^\ss(\SS^d)}\\
            \lesssim& \lrt{\frac{h}{\underline{h}}}^{d+1}h^{\rr-\ss}\lt\|g\rt\|_{\mathcal{H}^\rr(\SS^d)}\lesssim\lrt{\frac{h}{\underline{h}}}^{d+1}h^{\rr-\ss}\lt\|f\rt\|_{\mathcal{H}^\rr(\Omega)}.
        \end{split}
    \end{equation}
    Applying Lemma \ref{lem:ball_sphere} again, we obtain
    \begin{equation}
        \begin{split}
            &\Bigl\|f_E-\sul_{j=1}^ma_j\sigma_k(\circ\cdot w_j^*+ b_j^*)\biggr\|_{\mathcal{H}^\ss(\BB^d)}=\Bigl\|S_k\bigl(T_kf_E-\sul_{j=1}^ma_j\sigma_k(\theta_j^*\cdot\circ)\bigr)\biggr\|_{\mathcal{H}^\ss(\BB^d)}\\
            \lesssim&\Bigl\|T_kf_E-\sul_{j=1}^ma_j\sigma_k(\theta_j^*\cdot\circ)\Bigr\|_{\mathcal{H}^\ss( G)}\lesssim\lrt{\frac{h}{\underline{h}}}^{d+1}h^{\rr-\ss}\lt\|f\rt\|_{\mathcal{H}^\rr(\Omega)}.
        \end{split}
    \end{equation}
    This proves 
    \begin{equation}
        \Bigl\|f-\sul_{j=1}^n a_j\phi_j\Bigr\|_{\mathcal{H}^\ss(\Omega)}\lesssim h^{\rr-\ss}\lt\|f\rt\|_{\mathcal{H}^\rr(\Omega)}.
    \end{equation}

\end{proof}

\subsection{Proof of Theorem \ref{thm:barron_sob_equ}}
\begin{proof}
    Without loss of generality, we assume $\Omega\subset\BB^d$. By the extension theorem again, for $f\in\mathcal{H}^{\frac{d+2k+1}{2}}(\Omega)$, there exists
    $g$ on $\SS^d$, which is an extension of $T_kf$, such that
    \begin{equation*}
        \begin{split}
            &\|g\|_{\mathcal{H}^{\frac{d+2k+1}{2}}(\SS^d)}\lesssim\|T_kf\|_{\mathcal{H}^{\frac{d+2k+1}{2}}( G)}\lesssim\|f\|_{\mathcal{H}^{\frac{d+2k+1}{2}}(\Omega)},\\
            &g(\eta)=(-1)^{k+1}g(-\eta),\qquad \eta\in\SS^d.
        \end{split}
    \end{equation*}
    By Theorem \ref{thm:sph_int_rep_sob}, there exists $\psi_0\in\mathcal{L}^2(\SS^d)$ such that
    \begin{equation*}
        g(\eta)=\fint_{\SS^d}\psi_0(\theta)\sigma_k(\theta\cdot \eta)d\theta.
    \end{equation*}
    Then
    \begin{equation*}
        \begin{split}
            &\fint_{\SS^d}\psi_0(\theta)\sigma_k(\theta\cdot\tilde x)d\theta=\fint_{\SS^d}\psi_0(\theta)|\tilde x|^k\sigma_k\lrt{\theta\cdot\frac{\tilde x}{|\tilde x|}}d\theta\\
            =&|\tilde x|^kg\lrt{\frac{\tilde x}{|\tilde x|}}=S_kg(x)=f(x),\qquad x\in\BB^d,
        \end{split}
    \end{equation*}
    hence Theorem \ref{thm:sph_int_rep_sob} implies
    \begin{equation*}
        \begin{split}
            \|f\|_{\mathcal{H}^{\frac{d+2k+1}{2}}(\Omega)}\gtrsim&\|g\|_{\mathcal{H}^{\frac{d+2k+1}{2}}(\SS^d)}\simeq\|\psi_0\|_{\mathcal{L}^2(\SS^d)}\\
            \geq&\inf\limits_{\psi\in\mathcal{L}^2(\SS^d)}\lt\{\|\psi\|_{\mathcal{L}^2(\SS^d)}:~\int_{\SS^d}\sigma_k(\theta\cdot\tilde x)\psi(\theta)d\theta=f(x)\rt\}.
        \end{split}
    \end{equation*}
    Now suppose there is some $\psi_1$ such that
    \begin{equation*}
        \int_{\SS^d}\sigma_k(\theta\cdot\tilde x)\psi_1(\theta)d\theta=f(x),
    \end{equation*}
    then
    \begin{equation*}
        \begin{split}
            \fint_{\SS^d}\psi_1(\theta)\sigma_k(\theta\cdot \eta)d\theta=&\fint_{\SS^d}\psi_1(\theta)\eta_{d+1}^k\sigma_k\lrt{\theta\cdot\frac{\eta}{\eta_{d+1}}}d\theta\\
            =&\eta_{d+1}^kf\lt(\frac{\bar\eta}{\eta_{d+1}}\rt)=T_kf(\eta).
        \end{split}
    \end{equation*}
    Again, Theorem \ref{thm:sph_int_rep_sob} implies
    \begin{equation*}
        \|f\|_{\mathcal{H}^{\frac{d+2k+1}{2}}(\Omega)}\lesssim\|f\|_{\mathcal{H}^{\frac{d+2k+1}{2}}(\BB^d)}\simeq\|T_kf\|_{\mathcal{H}^{\frac{d+2k+1}{2}}(\SS^d)}\lesssim\|\psi_1\|_{\mathcal{L}^2(\SS^d)},
    \end{equation*}
    which yields
    \begin{equation*}
        \|f\|_{\mathcal{H}^{\frac{d+2k+1}{2}}(\Omega)}\lesssim\inf\limits_{\psi\in\mathcal{L}^2(\SS^d)}\lt\{\|\psi\|_{\mathcal{L}^2(\SS^d)}:~\int_{\SS^d}\sigma_k(\theta\cdot\tilde x)\psi(\theta)d\theta=f(x)\rt\}.
    \end{equation*}
\end{proof}

\section{Deterministic analysis of randomized neural networks:\\ questioning the necessity of randomization}\label{sec:random}

Neural network training involves non-convex optimization, which is computationally challenging. While greedy methods \cite{siegel2023greedy, xu2024efficient} offer convergence guarantees, they can be costly. Randomized approaches, such as stochastic basis selection \cite{Igelnik1995}, Extreme Learning Machines (ELM) \cite{Huang2006, huang2006universal, Liu2014}, and random features \cite{rahimi2007random, rahimi2008weighted, li2019towards, bach2017equivalence, nelsen2021random, mei2022generalization}, provide efficient alternatives by fixing hidden layer parameters randomly (often uniformly or Gaussian \cite{he2015delving}) and only optimizing linear weights. These methods have shown empirical success in various tasks, including function approximation, classification, and solving PDEs \cite{lopez2014randomized, belkin2020two, gerace2020generalisation, hu2022universality, dwivedi2020physics, dong2021local, chen2022bridging, zhang2024transferable, dang2024local, chi2024random}. However, the theoretical justification for specific sampling strategies often lags behind empirical performance.

Let $\mathbb{E}_{n,\mu}$ denote expectation over $n$ i.i.d. samples $\{\theta_j\}_{j=1}^n$ from a probability distribution $\mu$ on $\SS^d$. If $\mu$ is uniform, we write $\mathbb{E}_{n}$. Standard concentration inequalities applied to the integral representation \eqref{eqn:f_expect_repre} yield the follwoing theorem.
  
\begin{theorem}\label{thm:rand_half}
    Let $d,k,n,\Omega$ be as in Theorem \ref{thm:appr_rate_ball}, then for $f\in\mathcal{H}^{\frac{d+2k+1}{2}}(\Omega)$,
    \begin{equation}
        \mathbb{E}_n\Bigl[\inf\limits_{a\in\RR^n}\Bigl\|f-\sul_{j=1}^na_j\phi_j \Bigr\|^2_{\mathcal{L}^2(\Omega)}\Bigr]\lesssim n^{-1}\|f\|^2_{\mathcal{H}^{\frac{d+2k+1}{2}}(\Omega)}.
    \end{equation} 
\end{theorem}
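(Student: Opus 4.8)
The plan is to apply a Monte Carlo (Maurey-type) argument to the integral representation established in Theorem~\ref{thm:barron_sob_equ}. First I would invoke that theorem to obtain, for the given $f\in\mathcal{H}^{\frac{d+2k+1}{2}}(\Omega)$, a density $\psi\in\mathcal{L}^2(\SS^d)$ with $\|\psi\|_{\mathcal{L}^2(\SS^d)}\simeq\|f\|_{\mathcal{H}^{\frac{d+2k+1}{2}}(\Omega)}$ such that
\[
    f(x)=\fint_{\SS^d}\sigma_k(\theta\cdot\tilde x)\psi(\theta)\,d\theta=\mathbb{E}\bigl[\psi(\theta)\sigma_k(\theta\cdot\tilde x)\bigr],
\]
the expectation being over $\theta$ uniform on $\SS^d$, which is precisely \eqref{eqn:f_expect_repre}. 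The key observation is that this exhibits $f(x)$ as the mean of the i.i.d.\ random functions $\theta_j\mapsto\psi(\theta_j)\sigma_k(\theta_j\cdot\tilde x)$, so an empirical average over the sample $\{\theta_j\}_{j=1}^n$ is an unbiased estimator.

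Next I would fix the explicit (non-optimal) coefficients $a_j=\tfrac1n\psi(\theta_j)$ and set $f_n=\sum_{j=1}^n a_j\phi_j$, so that $\mathbb{E}_n[f_n(x)]=f(x)$ pointwise and, by independence, $\mathbb{E}_n[(f_n(x)-f(x))^2]=\tfrac1n\mathrm{Var}\bigl(\psi(\theta)\sigma_k(\theta\cdot\tilde x)\bigr)\le\tfrac1n\mathbb{E}\bigl[\psi(\theta)^2\sigma_k(\theta\cdot\tilde x)^2\bigr]$. Integrating in $x$ over $\Omega$ and interchanging the $x$- and $\theta$-integrals by Tonelli gives
\[
    \mathbb{E}_n\bigl[\|f_n-f\|_{\mathcal{L}^2(\Omega)}^2\bigr]\le\frac1n\fint_{\SS^d}\psi(\theta)^2\Bigl(\int_\Omega\sigma_k(\theta\cdot\tilde x)^2\,dx\Bigr)d\theta.
\]
Since $\Omega$ is bounded, $|\theta\cdot\tilde x|\le|\tilde x|\lesssim1$ uniformly in $x\in\Omega$ and $\theta\in\SS^d$, hence $\sigma_k(\theta\cdot\tilde x)^2\lesssim1$ and the inner integral is $\lesssim1$; this yields $\mathbb{E}_n[\|f_n-f\|_{\mathcal{L}^2(\Omega)}^2]\lesssim n^{-1}\|\psi\|_{\mathcal{L}^2(\SS^d)}^2\simeq n^{-1}\|f\|_{\mathcal{H}^{\frac{d+2k+1}{2}}(\Omega)}^2$. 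Finally, since $\inf_{a\in\RR^n}\|f-\sum_j a_j\phi_j\|_{\mathcal{L}^2(\Omega)}^2\le\|f-f_n\|_{\mathcal{L}^2(\Omega)}^2$ for every realization of the sample, taking expectations completes the proof.

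I do not expect a serious obstacle here; the argument is essentially routine. The only points requiring care will be (i) correctly invoking Theorem~\ref{thm:barron_sob_equ} so that $\|\psi\|_{\mathcal{L}^2(\SS^d)}$ is genuinely controlled by $\|f\|_{\mathcal{H}^{\frac{d+2k+1}{2}}(\Omega)}$, and (ii) justifying the Tonelli interchange together with the uniform bound on $\sigma_k(\theta\cdot\tilde x)$, which is exactly where boundedness of $\Omega$ is used. One could instead quote a generic Monte Carlo/Maurey lemma rather than exhibiting $a_j$ explicitly, but the self-contained computation above is the cleanest route.
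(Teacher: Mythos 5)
Your proof is correct and is precisely the standard Maurey/Monte Carlo argument the paper alludes to when it says ``standard concentration inequalities applied to the integral representation \eqref{eqn:f_expect_repre}'' but does not write out. The choice $a_j=\psi(\theta_j)/n$, the bias-variance computation $\mathbb{E}_n[(f_n(x)-f(x))^2]\le\frac1n\mathbb{E}[\psi(\theta)^2\sigma_k(\theta\cdot\tilde x)^2]$, the Tonelli interchange, and the uniform bound on $\sigma_k(\theta\cdot\tilde x)$ over the bounded $\Omega$ are exactly the ingredients the paper is tacitly invoking, so you have supplied a faithful filling-in of the details rather than a different route.
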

This $\mathcal{O}(n^{-1/2})$ $\mathcal{L}^2$-rate provides theoretical backing for random feature methods. (Note: Theorem \ref{thm:barron_sob_equ} allows deterministic points + uniform sampling, improving on \cite{siegel2022sharp} where the measure was unknown. We focus here on the stronger results derived from Theorem \ref{thm:appr_rate_ball}.)

Leveraging our deterministic approximation result (Theorem \ref{thm:appr_rate_ball}), we can refine the analysis of random sampling:

\begin{theorem}\label{thm:random_linear}
Let $\{\theta_j\}_{j=1}^n$ be i.i.d. uniform samples from $\SS^d$. With probability at least $1-\delta$, there exists $a\in\RR^n$ with $\|a\|_2\lesssim \lrt{\frac{n}{\log(n/\delta)}}^{\frac{2k+1-2\rr}{2d}}\|f\|_{\mathcal{H}^{\rr}(\Omega)}$ such that
    \begin{equation}\label{eqn:rate_random}
        \Bigl\|f-\sul_{j=1}^n a_j\phi_j\Bigr\|_{\mathcal{H}^\ss(\Omega)}\lesssim\lrt{\frac{n}{\log(n/\delta)}}^{-\frac{\rr-\ss}{d}}\lt\|f\rt\|_{\mathcal{H}^\rr(\Omega)}.
    \end{equation}
    Consequently, the expected error satisfies
    \begin{equation}\label{eqn:rate_expectation}
        \mathbb{E}_n\Bigl[\inf\limits_{a\in\RR^n}\Bigl\|f-\sul_{j=1}^n a_j\phi_j\Bigr\|_{\mathcal{H}^\ss(\Omega)}\Bigr]\lesssim \lrt{\frac{n}{\log n}}^{-\frac{\rr-\ss}{d}} \|f\|_{\mathcal{H}^\rr(\Omega)}.
    \end{equation}
    In particular, for $\mathcal{L}^2$ approximation ($\ss=0$, $\rr=\frac{d+2k+1}{2}$),
        \begin{equation}
        \mathbb{E}_n\Bigl[\inf\limits_{a\in\RR^n}\Bigl\|f-\sul_{j=1}^n a_j\phi_j\Bigr\|_{\mathcal{L}^2(\Omega)}\Bigr]\lesssim\lrt{\frac{n}{\log n}}^{-\frac{1}{2}-\frac{2k+1} {2d}}\|f\|_{\mathcal{H}^{\frac{d+2k+1}{2}}(\Omega)}.
    \end{equation}
\end{theorem}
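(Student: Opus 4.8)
The plan is to obtain both conclusions from the \emph{deterministic} estimate \eqref{eqn:main_Omega_beta} of Theorem~\ref{thm:appr_rate_ball}, which holds for an \emph{arbitrary} configuration $\{\theta_j\}_{j=1}^n\subset\SS^d$ with constants uniform over all configurations, and which depends on the points only through the mesh norm $h=\max_{\eta\in\SS^d}\min_{1\le j\le n}\rho(\eta,\theta_j)$. For $f,\Omega,\rr,\ss$ as in Theorem~\ref{thm:appr_rate_ball}, \eqref{eqn:main_Omega_beta} produces, for every realization of the random sample, a function $\sum_{j=1}^n a_j\phi_j\in L_{n,M}^k$ with $M\simeq h^{-\frac{d+2k+1-2\rr}{2}}\|f\|_{\mathcal{H}^\rr(\Omega)}$ such that
\begin{equation*}
\Bigl\|f-\sum_{j=1}^n a_j\phi_j\Bigr\|_{\mathcal{H}^\ss(\Omega)}\lesssim h^{\rr-\ss}\|f\|_{\mathcal{H}^\rr(\Omega)},\qquad \|a\|_2\le\frac{M}{\sqrt n}\lesssim h^{-\frac{d+2k+1-2\rr}{2}}\,n^{-\frac12}\,\|f\|_{\mathcal{H}^\rr(\Omega)}.
\end{equation*}
Hence the whole theorem reduces to an upper bound on $h$ for $n$ i.i.d.\ uniform points. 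Unlike Theorem~\ref{thm:rand_half}, which rests on concentration applied to the integral representation \eqref{eqn:f_expect_repre}, the present argument uses no probabilistic structure of $f$, only the geometry of the random point set.

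\smallskip
\noindent\textbf{Step 1: concentration of the mesh norm.} I would first establish the classical fact that, with probability at least $1-\delta$, $h\lesssim\bigl(\log(n/\delta)/n\bigr)^{1/d}$. Fix $\epsilon\in(0,\tfrac12)$ and take a maximal $\epsilon$-separated set $\{\zeta_i\}_{i=1}^N\subset\SS^d$; it is then an $\epsilon$-net, $N\lesssim\epsilon^{-d}$, and each geodesic cap $\BB_\rho(\zeta_i,\epsilon)$ has normalized surface measure $\gtrsim\epsilon^d$. Since $h\le2\epsilon$ whenever every cap $\BB_\rho(\zeta_i,\epsilon)$ contains at least one sample point, independence yields
\begin{equation*}
\mathbb{P}(h>2\epsilon)\le N\,(1-c\epsilon^d)^n\le C\,\epsilon^{-d}\,e^{-c\,n\,\epsilon^d}.
\end{equation*}
Choosing $\epsilon^d=A\,\log(n/\delta)/n$ with $A$ a sufficiently large absolute constant makes the right-hand side $\le\delta$ (for $n$ large; the finitely many small $n$ are trivial since $h\le\pi$ always), and on this event $\log(n/\delta)\ge1$.

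\smallskip
\noindent\textbf{Step 2: substitution.} On the event of Step~1 we have $h^d\simeq\log(n/\delta)/n$, so the displays above give at once $\|f-\sum_j a_j\phi_j\|_{\mathcal{H}^\ss(\Omega)}\lesssim h^{\rr-\ss}\|f\|_{\mathcal{H}^\rr(\Omega)}\lesssim(n/\log(n/\delta))^{-(\rr-\ss)/d}\|f\|_{\mathcal{H}^\rr(\Omega)}$ and, using $\tfrac{d+2k+1-2\rr}{2d}=\tfrac12+\tfrac{2k+1-2\rr}{2d}$,
\begin{equation*}
\|a\|_2\lesssim\bigl(h^d\bigr)^{-\frac{d+2k+1-2\rr}{2d}}n^{-\frac12}\|f\|_{\mathcal{H}^\rr(\Omega)}\simeq\bigl(\log(n/\delta)\bigr)^{-\frac12}\Bigl(\tfrac{n}{\log(n/\delta)}\Bigr)^{\frac{2k+1-2\rr}{2d}}\|f\|_{\mathcal{H}^\rr(\Omega)}\lesssim\Bigl(\tfrac{n}{\log(n/\delta)}\Bigr)^{\frac{2k+1-2\rr}{2d}}\|f\|_{\mathcal{H}^\rr(\Omega)},
\end{equation*}
the last step since $\log(n/\delta)\ge1$; this is \eqref{eqn:rate_random}. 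For the expectation estimate \eqref{eqn:rate_expectation}, I would combine the deterministic bound $\inf_{a\in\RR^n}\|f-\sum_j a_j\phi_j\|_{\mathcal{H}^\ss(\Omega)}\le\|f\|_{\mathcal{H}^\rr(\Omega)}$ (valid since $0\in L_{n,M}^k$) with the bound $\lesssim t^{\rr-\ss}\|f\|_{\mathcal{H}^\rr(\Omega)}$ on $\{h\le t\}$, split the expectation at $t_0=(K\log n/n)^{1/d}$, and pick the absolute constant $K$ large enough that Step~1 gives $\mathbb{P}(h>t_0)\lesssim n^{-(\rr-\ss)/d-1}$; this leads to $\mathbb{E}_n\bigl[\inf_{a}\|f-\sum_j a_j\phi_j\|_{\mathcal{H}^\ss(\Omega)}\bigr]\lesssim(t_0^{\rr-\ss}+\mathbb{P}(h>t_0))\|f\|_{\mathcal{H}^\rr(\Omega)}\lesssim(n/\log n)^{-(\rr-\ss)/d}\|f\|_{\mathcal{H}^\rr(\Omega)}$, the stated $\mathcal{L}^2$ bound being the case $\ss=0$, $\rr=\tfrac{d+2k+1}{2}$.

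\smallskip
\noindent\textbf{Main obstacle.} There is no essential difficulty: Step~1 is a textbook covering-number argument and Step~2 is pure substitution into Theorem~\ref{thm:appr_rate_ball}. The only point requiring care is the bookkeeping of the logarithmic factors — keeping $\log(n/\delta)\ge1$ so that it tightens rather than loosens the coefficient bound, and choosing $K$ large enough in Step~2 that the ``bad-event'' contribution $\mathbb{P}(h>t_0)$ remains dominated by $t_0^{\rr-\ss}$ even when $(\rr-\ss)/d$ is large.
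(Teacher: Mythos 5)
Your proposal is correct and follows the same route as the paper: bound the mesh norm $h$ by a covering-number/union-bound argument for i.i.d.\ uniform points, then substitute into the deterministic estimate \eqref{eqn:main_Omega_beta} of Theorem~\ref{thm:appr_rate_ball}, tracking the coefficient bound $\|a\|_2\le M/\sqrt n$ with $M\simeq h^{-\frac{d+2k+1-2\rr}{2}}\|f\|_{\mathcal{H}^\rr(\Omega)}$. The paper states this in one line (``standard covering arguments and concentration inequalities''); you merely supply the maximal-separated-set construction and the explicit split of the expectation at $t_0=(K\log n/n)^{1/d}$, both of which are standard and match the implicit calculation behind the paper's \eqref{eqn:rate_expectation}.
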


\begin{proof}
The proof relies on Theorem \ref{thm:appr_rate_ball}, which depends on $h=\max\limits_{\theta\in\SS^d}\min\limits_{1\leq j\leq n}\rho(\theta,\theta_j)$. Standard covering arguments and concentration inequalities show that for i.i.d. uniform points $\{\theta_j\}$, $h \lesssim (n/\log(n/\delta))^{-1/d}$ with probability at least $1-\delta$. Substituting this into \eqref{eqn:main_res_nonuniform} yields the result.
\end{proof}

This analysis clarifies why randomization works: it efficiently generates parameter distributions $\{\theta_j\}$ that are nearly well-distributed (i.e., $h$ is small) with high probability. However, comparing Theorem \ref{thm:random_linear} with Theorem \ref{thm:appr_rate_ball} reveals that deterministically chosen well-distributed points achieve a *better* approximation rate (lacking the $\log n$ factor) and eliminate the small probability $\delta$ of failure associated with random sampling. Since well-distributed point sets on spheres can be constructed deterministically, we conclude that they are sufficient and theoretically preferable to random sampling for achieving optimal approximation rates in this context. Randomization is thus a practical means to approximate a desirable deterministic configuration.

\section{Generalization Analysis for FNS using Rademacher Complexity} \label{sec:gene_analy}

We analyze the generalization error when approximating a target function $f$ using a dataset $D_m$. Given the approximation rate \eqref{eqn:main_Omega_beta} and coefficient bound $M$ from Theorem \ref{thm:appr_rate_ball}, we can estimate the expected error between $f$ and an approximant constructed from $m$ i.i.d. samples.

Standard generalization analysis uses concentration inequalities \cite{boucheron2003concentration} and complexity measures like Rademacher complexity, covering numbers, or spectral complexity \cite{cucker2007learning, shalev2014understanding}. These bounds depend critically on the coefficient norms of the approximants.

We illustrate using a second-order linear elliptic PDE with Neumann boundary conditions on a bounded $C^\infty$ domain $\Omega \subset \mathbb{R}^d$ with $|\Omega|=1$:
\begin{equation}\label{eq:pde_strong}
\begin{cases}
    -\Delta g(x) + g(x) = h(x), & x \in \Omega \\
     \frac{\partial g}{\partial n}(x)  = 0, & x \in \partial \Omega,
\end{cases}
\end{equation}
where $h\in\mathcal{L}^\infty(\Omega)$ is given. The variational formulation seeks $g \in \mathcal{H}^1(\Omega)$ minimizing the energy functional:
\begin{equation}\label{eqn:exp_loss_rad_fns_revised}
    \mathcal{E}(g) := \int_\Omega \Psi(g)(x) \, dx, \quad \text{where } \Psi(g)(x) = \frac{1}{2} |\nabla g(x)|^2 + \frac{1}{2} g(x)^2 - h(x)g(x).
\end{equation}
The target function $f$ is the unique minimizer of $\mathcal{E}(g)$ due to strict convexity:
\begin{equation}\label{eqn:true_minimizer}
    f := \arg\min_{g \in \mathcal{H}^1(\Omega)} \mathcal{E}(g).
\end{equation}

Using Monte Carlo, we draw $m$ i.i.d. uniform samples $x_1, \dots, x_m$ from $\Omega$ and approximate $\mathcal{E}(g)$ with the empirical risk:
\begin{equation}\label{eqn:emp_risk_rad_fns_revised}
    \mathcal{E}_m(g) := \frac{1}{m} \sum_{i=1}^m \Psi(g)(x_i).
\end{equation}
Assuming $f \in \mathcal{H}^{\frac{d+2k+1}{2}}(\Omega)$, we use the hypothesis space $L_{n,M}^k$ (from Theorem \ref{thm:appr_rate_ball}). The empirical risk minimizer (ERM) is:
\begin{equation}\label{eqn:emp_minimizer_rad_fns_revised}
    f_{n,m} := \arg\min\limits_{g \in L_{n,M}^k} \mathcal{E}_m(g).
\end{equation}
Since $L_{n,M}^k$ is linear and $\mathcal{E}_m(g)$ is quadratic in coefficients, $f_{n,m}$ is unique and efficiently computable via convex optimization.

The generalization error is the expected excess risk:
\begin{equation}\label{eqn:general_error}
    \mathbb{E}_{x_1,\dots,x_m}\Big[\mathcal{E}(f_{n,m}) - \mathcal{E}(f)\Big].
\end{equation}
Due to strong convexity of $\mathcal{E}(g)$, bounding this also bounds the expected squared $\mathcal{H}^1(\Omega)$ error $\mathbb{E}_{x_1,\dots,x_m}\Big[\|f_{n,m} - f\|_{\mathcal{H}^1(\Omega)}^2\Big]$.

\subsection{Rademacher Complexity}
We use Rademacher complexity \cite{bartlett2002rademacher} to bound \eqref{eqn:general_error}. For a function class $\mathcal{F}:\Omega\to\RR$, its Rademacher complexity is:
\begin{equation}\label{eqn:Rademacher}
    R_m(\mathcal{F})=\mathbb{E}_{x_i, \xi_i}\Big[\sup\limits_{g\in\mathcal{F}}\frac{1}{m}\sul_{i=1}^m\xi_ig(x_i)\Big],
\end{equation}
where $x_i$ are i.i.d. uniform samples and $\xi_i$ are i.i.d. Rademacher variables ($\pm 1$).

We bound $R_m(\mathcal{F}_{n,M})$ for $\mathcal{F}_{n,M} := \{\Psi(g) : g \in L_{n,M}^k\}$. Since $L_{n,M}^k \subset \Sigma_{n,M}^k$, monotonicity gives $R_m(\mathcal{F}_{n,M}) \le R_m(\{\Psi(g) : g \in \Sigma_{n,M}^k\})$.
Using standard Rademacher complexity properties (sum rule, Lipschitz composition, product rule \cite{hong2021priori, bartlett2002rademacher, mohri2018foundations, wainwright2019high}) and known bounds $R_m(\Sigma_{n,M}^k) \lesssim M m^{-1/2}$ and $R_m(\{\partial g/\partial x_j : g \in \Sigma_{n,M}^k\}) \lesssim M m^{-1/2}$ \cite[Theorem 6]{hong2021priori}, we get:
\begin{equation}\label{eqn:Rademacher_comp}
    R_m(\mathcal{F}_{n,M}) \le R_m\Big(\Big\{\Psi(g):~g\in\Sigma_{n,M}^k\Big\}\Big) \lesssim (M+\|h\|_{\mathcal{L}^\infty(\Omega)})Mm^{-\frac{1}{2}}.
\end{equation}

\subsection{Generalization analysis}
Combining Theorem \ref{thm:appr_rate_ball} and the complexity bound \eqref{eqn:Rademacher_comp} yields the generalization error estimate.

\begin{theorem}\label{thm:generalization}
    Let $\Omega\subset \mathbb{R}^d$ be a bounded $C^\infty$ domain. Let $f$, $f_{n,m}$ be defined by \eqref{eqn:true_minimizer} and \eqref{eqn:emp_minimizer_rad_fns_revised}. If $f\in\mathcal{H}^{\frac{d+2k+1}{2}}(\Omega)$, the expected excess risk satisfies:
\begin{equation}\label{eqn:gener_main_error}
    \mathbb{E}_{x_1,\dots,x_m}\Big[\mathcal{E}(f_{n,m}) - \mathcal{E}(f)\Big]\lesssim (M+\|h\|_{\mathcal{L}^\infty(\Omega)})Mm^{-\frac{1}{2}}+M^2n^{-1-\frac{2k-1}{d}}.
\end{equation}
    Choosing $n=\lceil m^{\frac{d}{2(d+2k-1)}}\rceil$ balances the terms, giving:
\begin{equation}
    \mathbb{E}_{x_1,\dots,x_m}\Big[\mathcal{E}(f_{n,m}) - \mathcal{E}(f)\Big]\lesssim (M+\|h\|_{\mathcal{L}^\infty(\Omega)})Mm^{-\frac{1}{2}}.
\end{equation}
By strong convexity of $\mathcal{E}(g)$, the expected squared $\mathcal H^1(\Omega)$ error is bounded:
\begin{equation}\label{eqn:gener_H1}
    \mathbb{E}_{x_1,\dots,x_m}\Big[\|f_{n,m} - f\|_{\mathcal H^1(\Omega)}^2\Big]\lesssim (M+\|h\|_{\mathcal{L}^\infty(\Omega)})Mm^{-\frac{1}{2}}.
\end{equation}
\end{theorem}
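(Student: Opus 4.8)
The plan is to decompose the excess risk $\mathcal{E}(f_{n,m})-\mathcal{E}(f)$ into an estimation part, controlled by the Rademacher complexity bound \eqref{eqn:Rademacher_comp}, and an approximation part, controlled by Theorem \ref{thm:appr_rate_ball}; throughout I would assume $k\ge 1$, which is needed both for $\mathcal{H}^1$-approximation and for $\Psi(g)$ to be well defined on $L_{n,M}^k$. First I would fix well-distributed points $\{\theta_j^*\}_{j=1}^n$ (so $h\simeq n^{-1/d}$) and let $f_n^*\in L_{n,M}^k$ be the $\mathcal{H}^1$-approximant of $f$ furnished by \eqref{eqn:main_Omega_beta} at $\rr=\frac{d+2k+1}{2}$ and $\ss=1$. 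The crucial observation is that at this value of $\rr$ the coefficient bound in Theorem \ref{thm:appr_rate_ball} reads $M\simeq\|f\|_{\mathcal{H}^{\frac{d+2k+1}{2}}(\Omega)}$, so $f_n^*$ indeed lies in the hypothesis space $L_{n,M}^k$; moreover $f_n^*$ depends only on $f$ and the deterministic points, hence is independent of the samples $x_1,\dots,x_m$, and $\mathbb{E}_{x_1,\dots,x_m}[\mathcal{E}_m(f_n^*)]=\mathcal{E}(f_n^*)$.

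Next I would use the standard four-term split
\begin{align*}
    \mathcal{E}(f_{n,m})-\mathcal{E}(f) &= \bigl(\mathcal{E}(f_{n,m})-\mathcal{E}_m(f_{n,m})\bigr) + \bigl(\mathcal{E}_m(f_{n,m})-\mathcal{E}_m(f_n^*)\bigr)\\
    &\quad + \bigl(\mathcal{E}_m(f_n^*)-\mathcal{E}(f_n^*)\bigr) + \bigl(\mathcal{E}(f_n^*)-\mathcal{E}(f)\bigr).
\end{align*}
The second term is $\le 0$ by optimality of the empirical risk minimizer over $L_{n,M}^k\ni f_n^*$, and the third vanishes in expectation by the previous paragraph. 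For the first term, a symmetrization argument bounds its expectation by $2R_m(\mathcal{F}_{n,M})$ with $\mathcal{F}_{n,M}=\{\Psi(g):g\in L_{n,M}^k\}$: here one checks that every $g\in L_{n,M}^k\subset\Sigma_{n,M}^k$ obeys $\|g\|_{\mathcal{L}^\infty(\Omega)}+\|\nabla g\|_{\mathcal{L}^\infty(\Omega)}\lesssim M$, since each neuron $\sigma_k(\theta_j\cdot\tilde x)$ and its gradient are bounded on the bounded domain and the coefficients are $\ell^1$-controlled by $M$; consequently $\Psi(g)$ is uniformly bounded by $\lesssim M^2+M\|h\|_{\mathcal{L}^\infty(\Omega)}$, which legitimizes the symmetrization together with the sum/product/Lipschitz-composition Rademacher rules underlying \eqref{eqn:Rademacher_comp}, and then \eqref{eqn:Rademacher_comp} gives $R_m(\mathcal{F}_{n,M})\lesssim (M+\|h\|_{\mathcal{L}^\infty(\Omega)})Mm^{-1/2}$.

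For the approximation term I would note that $\mathcal{E}(g)=\tfrac12 a(g,g)-(h,g)$, where $a(u,v)=\int_\Omega(\nabla u\cdot\nabla v+uv)\,dx$ is exactly the $\mathcal{H}^1(\Omega)$ inner product; since $f$ solves the variational problem, $a(f,v)=(h,v)$ for all $v\in\mathcal{H}^1(\Omega)$, whence $\mathcal{E}(g)-\mathcal{E}(f)=\tfrac12\|g-f\|_{\mathcal{H}^1(\Omega)}^2$ for every $g$. Applying this with $g=f_n^*$ and the rate \eqref{eqn:main_Omega_beta} at $\rr=\frac{d+2k+1}{2}$, $\ss=1$, $h\simeq n^{-1/d}$ gives $\mathcal{E}(f_n^*)-\mathcal{E}(f)\lesssim h^{d+2k-1}\|f\|_{\mathcal{H}^{\frac{d+2k+1}{2}}(\Omega)}^2\simeq M^2 n^{-1-\frac{2k-1}{d}}$. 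Collecting the three contributions yields \eqref{eqn:gener_main_error}; the choice $n=\lceil m^{\frac{d}{2(d+2k-1)}}\rceil$ balances $M^2 n^{-1-\frac{2k-1}{d}}\simeq M^2 m^{-1/2}$, and \eqref{eqn:gener_H1} follows at once from the identity $\mathcal{E}(f_{n,m})-\mathcal{E}(f)=\tfrac12\|f_{n,m}-f\|_{\mathcal{H}^1(\Omega)}^2$. I expect the main obstacle to be the bookkeeping in the symmetrization step: one must be careful that the uniform bound on $\Psi(g)$ over the class scales with $M$ and not with $n$ --- which is precisely why the $\ell^1$-type control inherited from $L_{n,M}^k\subset\Sigma_{n,M}^k$, rather than the $\ell^2$-constraint that defines $L_{n,M}^k$, is the right quantity, and why the Rademacher bounds for $\Sigma_{n,M}^k$ from \cite{hong2021priori} carry over --- together with verifying the regularity hypotheses ($C^\infty$ domain, $h\in\mathcal{L}^\infty(\Omega)$, $k\ge 1$) that make both the variational identity and the $\mathcal{H}^1$-approximation rate applicable.
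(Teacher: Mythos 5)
Your proposal is correct and fills in the details that the paper only sketches. The paper's own "proof" is a one-paragraph pointer to \cite{bartlett2002rademacher,mohri2018foundations,hong2021priori}, citing the Rademacher bound \eqref{eqn:Rademacher_comp} for the estimation term and Theorem \ref{thm:appr_rate_ball} for the approximation term; your four-term decomposition, the choice of $f_n^*$ via \eqref{eqn:main_Omega_beta} at $\rr=\frac{d+2k+1}{2}$, $\ss=1$ (so that $M\simeq\|f\|_{\mathcal{H}^{(d+2k+1)/2}(\Omega)}$ and $f_n^*\in L_{n,M}^k$), the variational identity $\mathcal{E}(g)-\mathcal{E}(f)=\tfrac12\|g-f\|_{\mathcal{H}^1(\Omega)}^2$, and the observation that the $\ell^1$-type bound coming from $L_{n,M}^k\subset\Sigma_{n,M}^k$ is the right way to control $\Psi(g)$ uniformly in $n$ are exactly the ingredients implicitly invoked, so this is the same argument made explicit.
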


The proof follows standard arguments \cite{bartlett2002rademacher, mohri2018foundations}. The first term in \eqref{eqn:gener_main_error} arises from the Rademacher complexity bound \eqref{eqn:Rademacher_comp} via generalization bounds. The second term is the approximation error from Theorem \ref{thm:appr_rate_ball}. Strong convexity yields \eqref{eqn:gener_H1}. See \cite{hong2021priori} for similar detailed proofs for PDE approximation.

\section{Concluding remarks}
In this paper, we developed a new integral representation of Sobolev space using ReLU$^k$ function, and compared it with the corresponding result of Barron spaces. We also showed in Sobolev spaces, the approximation properties of nonlinear shallow neural networks can be fully realized through simple linearization, and provided an upper bound of the corresponding coefficients. Such an upper bound is essential in approximation theory, and allows the corresponding generalization analysis. It is also worth noting that the techniques in this paper also inspires a Bernstein inequality for ReLU$k$ neural networks, for which we are completing a new paper entitled “Bernstein Inequalities for Linearized ReLU$k$ Neural Networks and Applications”.

Since a DNN is essentially a composition of shallow neural networks, we hope that the findings in this paper provide valuable insights into the role of nonlinearity within deep neural networks. Furthermore, we aim for this work to inspire further mathematical research into the interplay between nonlinearity and expressive power (approximation properties) in deep learning.

\bibliographystyle{plain}
\bibliography{ref}

\end{document}